 	\definecolor{darkred}{rgb}{0.5,0,0}
 	\definecolor{darkgreen}{rgb}{0,0.5,0}
 	\definecolor{darkblue}{rgb}{0,0,0.5} 	\hypersetup{colorlinks,linkcolor=darkblue,filecolor=darkgreen,urlcolor=darkred,citecolor=darkblue}
\numberwithin{equation}{section}
\DeclareMathOperator{\real}{Re}
\DeclareMathOperator{\imag}{Im}
\DeclareMathOperator{\spt}{supp}
\DeclareMathOperator{\dom}{dom}
\newcommand{\ii}{\mathrm{i}}
\newcommand{\e}{\mathbf{e}}
\newcommand{\euler}{\mathrm{e}}
\renewcommand{\L}{\mathcal{L}}
\newcommand{\T}{\mathbb{T}}
\newcommand{\D}{\mathcal{D}}
\newcommand{\Schw}{\mathcal{S}}
\newcommand{\F}{\mathcal{F}}
\newcommand{\thickset}{E}
\newcommand{\ball}[2]{B (#1 , #2)}
\newcommand{\ballc}[1]{B (#1)}
\newcommand{\disc}[2]{D (#1 , #2)}
\newcommand{\discc}[1]{D (#1)}
\newcommand{\Eins}{\mathbf{1}}
\newcommand{\Z}{\mathbb{Z}}
\newcommand{\C}{\mathbb{C}}
\newcommand{\R}{\mathbb{R}}
\newcommand{\N}{\mathbb{N}}
\newcommand{\diff}{\mathrm{d}}
\newcommand{\abs}[1]{{\lvert #1 \rvert}}
\newcommand{\dual}[1]{\langle#1\rangle}
\newcommand{\loc}{\mathrm{loc}}
\newcommand{\ran}{\mathrm{ran}}
\newtheorem{theorem}{Theorem}[section]
\newtheorem{lemma}[theorem]{Lemma}
\newtheorem{proposition}[theorem]{Proposition}
\theoremstyle{definition}
\newtheorem{example}[theorem]{Example}
\theoremstyle{remark}
\newtheorem{remark}[theorem]{Remark}
\title{A Logvinenko-Sereda theorem for vector-valued functions and application to control theory}
\author{Clemens Bombach}
\affil{Technische Universit\"at Chemnitz, Fakult\"at f\"ur Mathematik, 09126 Chemnitz, Germany}
\author{Martin Tautenhahn}
\affil{Universit\"at Leipzig, Fakult\"at f\"ur Mathematik und Informatik, 04109 Leipzig, Germany}
\date{\vspace{-4ex}}
\begin{document}
\maketitle
\begin{abstract}
We prove a Logvinenko-Sereda Theorem for vector valued functions. That is, for an arbitrary Banach space $X$, all $p \in [1,\infty]$, all $\lambda \in (0,\infty)^d$, all $f \in L^p (\R^d ; X)$ with $\spt \F f \in \times_{i=1}^d (-\lambda_i/2 , \lambda_i /2)$, and all thick sets $E \subseteq \R^d$ we have
\begin{equation*}
\lVert \Eins_E f \rVert_{L^p (\R^d;X)} \geq 
C  \lVert f \rVert_{L^p (\R^d;X)} .
\end{equation*}
The constant is explicitly known in dependence of the geometric parameters of the thick set and the parameter $\lambda$.
As an application, we study control theory for normally elliptic operators on Banach spaces whose coefficients of their symbol are given by bounded linear operators. This includes systems of coupled parabolic equations or problems depending on a parameter.
\\[1ex]
\textsf{\textbf{2020 Mathematics Subject Classification.}} 42B99, 42B37, 47D06, 35Q93, 47N70, 93B05, 93B07.
\\[1ex]
\textbf{\textsf{Keywords.}} 
Logvinenko-Sereda theorem, Banach space valued functions, Observability estimates, Null-controllability, normally elliptic operators, operator semigroups.
\end{abstract}
\section{Introduction} \label{sec:intro}
The paper is split into two parts. The first part concerns a generalization of the classical Logvinenko-Sereda Theorem to vector valued functions. The second part then studies an  application to control theory.
\par
The Logvinenko-Sereda Theorem goes back at least to the papers \cite{Panejah-61,Panejah-62}, and has been proven independently in \cite{Kacnelson-73,LogvinenkoS-74}. In order to formulate its result we introduce some notation.Let $\rho\in (0,1]$ and $L = (L_i)_{i=1}^d \in (0,\infty)^d$. A set $\thickset \subseteq \R^d$ is called \emph{$(\rho,L)$-thick} if $\thickset$ is measurable and for all $x \in \R^d$ we have
   \begin{equation} \label{eq:thick-set}
   \left\lvert \thickset \cap \left( \bigtimes_{i=1}^d (0,L_i) + x \right) \right\rvert \geq \rho \prod_{i=1}^{d} L_i.
  \end{equation}
Here, $\lvert \cdot \rvert$ denotes the Lebesgue measure. For $\lambda \in (0,\infty)^d$ we use the notation 
\begin{equation}\label{eq:parallelepiped}
\Pi_\lambda = \times_{i=1}^d (-\lambda_i/2 , \lambda_i / 2) 
\end{equation}
for the parallelepiped with side lengths $\lambda_i$, $i \in \{1,2,\ldots , d\}$. For $f \in L^p (\R^d)$ we denote by $\F f$ its Fourier transform.
The results of the above mentioned papers can be summarized as follows.
 \begin{theorem}\label{thm:LS-C}
For all $p \in [1,\infty]$, all $\lambda \in (0,\infty)^d$, all all $\rho > 0$, all $L \in (0,\infty)^d$, and all $(\rho , L)$-thick sets $\thickset \subseteq \R^d$ there exists a constant $C \geq 1$ such that for all $f \in L^p (\R^d)$ with $\spt \mathcal{F} f \subseteq \Pi_\lambda$ we have 
\begin{equation}\label{eq:LS-C}
\lVert \Eins_E f \rVert_{L^p (\R^d)} \geq 
C  \lVert f \rVert_{L^p (\R^d)} .
\end{equation}
\end{theorem}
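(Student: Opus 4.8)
The plan is to follow Kovrijkine's real-variable approach to the Logvinenko--Sereda theorem, which produces the constant $C$ explicitly in terms of $p$, $d$, $\rho$, $L$ and $\lambda$. First I would reduce: after the coordinatewise dilation $x \mapsto (\lambda_1 x_1, \ldots, \lambda_d x_d)$, which sends a $(\rho,L)$-thick set to a thick set with the same $\rho$ and a rescaled box, one may assume $\spt \F f \subseteq (-1/2,1/2)^d$, and it suffices to produce a finite constant $C'$, depending only on the above data, with $\lVert f \rVert_{L^p(\R^d)} \le C' \lVert \Eins_E f \rVert_{L^p(\R^d)}$, for then \eqref{eq:LS-C} holds with $C = 1/C'$. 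Since $\F f$ is compactly supported, $f$ is the restriction to $\R^d$ of an entire function of exponential type, and Bernstein's inequality gives $\lVert \partial^\alpha f \rVert_{L^p(\R^d)} \le c^{\lvert\alpha\rvert}\lVert f \rVert_{L^p(\R^d)}$ for every multi-index $\alpha$ with $c$ an absolute constant; equivalently $f$ extends holomorphically to $\C^d$ with growth controlled by its $L^p$-norm.

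Next I would set up the good/bad dichotomy. Partition $\R^d$ into the grid $\{Q_\nu\}_{\nu\in\Z^d}$ of translates of the box over which thickness is measured, let $\widehat Q_\nu \supseteq Q_\nu$ be the concentric box dilated by a fixed factor, and call $Q_\nu$ \emph{good} if $\lVert f \rVert_{L^p(\widehat Q_\nu)}^p \le D\,\lVert f \rVert_{L^p(Q_\nu)}^p$, where $D$ is chosen below, and \emph{bad} otherwise. Each point of $\R^d$ lies in at most $K_d$ of the dilated boxes, so $\sum_\nu \lVert f \rVert_{L^p(\widehat Q_\nu)}^p \le K_d \lVert f \rVert_{L^p(\R^d)}^p$, and taking $D = 2K_d$ forces $\sum_{\nu\ \mathrm{bad}} \lVert f \rVert_{L^p(Q_\nu)}^p \le \tfrac12 \lVert f \rVert_{L^p(\R^d)}^p$, hence
\begin{equation*}
\sum_{\nu\ \mathrm{good}} \lVert f \rVert_{L^p(Q_\nu)}^p \ \ge\ \tfrac12 \lVert f \rVert_{L^p(\R^d)}^p .
\end{equation*}
On a good box $f$ cannot be much smaller on $Q_\nu$ than on its neighbourhood, which is precisely what a local analyticity argument needs.

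The heart of the matter is the local estimate: for every good box $Q = Q_\nu$,
\begin{equation*}
\lVert f \rVert_{L^p(Q)}^p \ \le\ \Bigl(\frac{C_1}{\rho}\Bigr)^{C_2} \lVert f \rVert_{L^p(E\cap Q)}^p ,
\end{equation*}
with $C_1, C_2$ depending only on $d$, $D$ and $c$ (hence, after undoing the dilation, on $L$, $\lambda$), but not on $\nu$ or $\rho$. Here I would pass to the holomorphic extension of $f$: the good-box inequality together with the Bernstein bounds controls $\sup\lvert f\rvert$ over a complex polydisc of fixed polyradius around $Q$ by a multiple $M = M(D,c,d)$ of $\bigl(\abs{Q}^{-1}\lVert f\rVert_{L^p(Q)}^p\bigr)^{1/p}$, and a Remez--Tur\'an type inequality for holomorphic functions then upgrades $\abs{E\cap Q}\ge\rho\abs{Q}$ into the displayed bound, the multidimensional case following by slicing $Q$ into axis-parallel segments, applying the one-variable Remez inequality (whose constant is a power of $\rho$) on each slice, and integrating. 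Summing over good boxes and using the mass bound,
\begin{equation*}
\lVert f \rVert_{L^p(\R^d)}^p \ \le\ 2\sum_{\nu\ \mathrm{good}} \lVert f \rVert_{L^p(Q_\nu)}^p \ \le\ 2\Bigl(\frac{C_1}{\rho}\Bigr)^{C_2}\sum_\nu \lVert f \rVert_{L^p(E\cap Q_\nu)}^p \ =\ 2\Bigl(\frac{C_1}{\rho}\Bigr)^{C_2}\lVert \Eins_E f \rVert_{L^p(\R^d)}^p ,
\end{equation*}
which is \eqref{eq:LS-C} with an explicit $C$ (and $p=\infty$ is handled identically with sup-norms in place of integrals). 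I expect this local estimate to be the main obstacle — making the passage from an $L^p$-average over $Q$ to a pointwise bound on a complex neighbourhood quantitatively clean, and executing the multidimensional Remez inequality by slicing while keeping the dependence on $\rho$ of the correct power order; the good/bad dichotomy and the final summation are comparatively routine.
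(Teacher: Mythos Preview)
Your overall scheme is Kovrijkine's and matches the paper's route (the paper cites Theorem~\ref{thm:LS-C} as classical and proves the vector-valued generalisation Theorem~\ref{thm:LS} in the appendix; for $X=\C$ that proof specialises to the scalar case). But the particular good/bad dichotomy you set up does not deliver the local analytic control you need, and this is a genuine gap rather than a technicality.

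You declare $Q_\nu$ good when $\lVert f\rVert_{L^p(\widehat Q_\nu)}\le D^{1/p}\lVert f\rVert_{L^p(Q_\nu)}$ for one fixed real dilate $\widehat Q_\nu$, and then claim that this together with the global Bernstein inequality bounds $\sup\lvert f\rvert$ on a complex polydisc around $Q_\nu$ by a fixed multiple $M$ of the local $L^p$-average on $Q_\nu$. It does not: writing $f=f*\psi$ with $\F\psi\equiv1$ on $\spt\F f$ and evaluating at $x+iy$, the value $f(x+iy)$ is a weighted average of $f$ over all of $\R^d$ with a kernel that decays only polynomially in $\lvert x-\cdot\rvert$, so the contribution from outside $\widehat Q_\nu$ is controlled by the \emph{global} norm of $f$, not by $\lVert f\rVert_{L^p(Q_\nu)}$. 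Your good-box condition says nothing about that tail, hence the ratio $M$ feeding into the Remez step is not uniformly bounded over good boxes. Kovrijkine's dichotomy, which the paper adopts, is different in kind: a cube $\Lambda_k$ is \emph{bad} if some derivative is too large on $\Lambda_k$ relative to $f$ itself, i.e.\ $\lVert\partial^\alpha f\rVert_{L^p(\Lambda_k)}\ge 2^dA^{\lvert\alpha\rvert}(C_2\lambda)^\alpha\lVert f\rVert_{L^p(\Lambda_k)}$ for some $\alpha\neq0$. On a good cube one then locates a single point $x_0\in\Lambda_k$ at which \emph{every} $\partial^\alpha f(x_0)$ is controlled by $\lVert f\rVert_{L^p(\Lambda_k)}$, and the Taylor expansion about $x_0$ bounds $\lvert f\rvert$ on the complex polydisc \emph{without any reference to $f$ outside $\Lambda_k$}. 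The mass estimate for bad cubes then follows by summing the global Bernstein inequality over $\alpha$. Replacing your dichotomy by this derivative-based one closes the gap; the remainder of your outline (one-dimensional Remez-type lemma, summation over good cubes) is sound, though the paper reduces to one dimension by selecting a single favourable direction $\vartheta_0\in S^{d-1}$ via averaging in spherical coordinates, followed by a sublevel-set argument to pass from $\sup$ to $L^p$, rather than by axis-parallel slicing.
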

Thus the result compares the overall $L^p$-norm of the function $f$ with its norm only on a thick subset $\thickset \subset \R^d$. 
The papers \cite{Kacnelson-73,LogvinenkoS-74} also show that the constant $C$ can be chosen as $C =  c_1 \euler^{c_2 \abs{\lambda}}$ with some positive constants $c_1$ and $c_2$ depending only on the space dimension and the geometric parameters $\rho$ and $L$.  This result has been significantly improved in \cite{Kovrijkine-00,Kovrijkine-01}, in which it is shown that $C$ can be chosen as 
\[
 C = \left( \frac{\rho}{K} \right)^{K (1 + \lambda \cdot L)}
\]
with some positive constant $K$ depending only on the dimension, which appears to be optimal. Subsequently, the classical Logvinenko-Sereda theorem has been adapted to various settings, e.g., to $L^2$-functions whose Fourier–Bessel transform is supported in an interval \cite{GhobberJ-12}, or to functions on the torus in \cite{EgidiV-20}.
\par
In the case $p =2$, the condition $\spt \mathcal{F}f \subseteq \Pi_\lambda$ is implied by $f \in \ran {P_{\sqrt{\lambda}}(-\Delta)}$ where $-\Delta$ denotes the negative Laplacian and $P_{\sqrt{\lambda}}(-\Delta)$ denotes the associated spectral projector on $L^2(\R^d)$ onto energies below $\sqrt{\lambda}$. One can therefore ask whether Theorem~\ref{thm:LS-C} continues to hold if we assume that $f \in \ran{ P_\lambda(H)}$ for a certain self-adjoint operator $H$ acting on $L^2(\R^d)$. This is indeed the case if $H = -\Delta_g  + V$ where $g$ is an analytic perturbation of the flat metric and $V:\R^d \to \R$ is analytic and decays at infinity, as shown in \cite{LebeauM}. Moreover, the recent \cite{EgidiS-21} provides a sufficient condition (a Bernstein-like inequality) for $f \in \ran{P_\lambda(H)}$ such that inequality \eqref{eq:LS-C} with $p = 2$ holds for thick observation sets $E$. Examples include the pure Laplacian, which is covered by Theorem~\ref{thm:LS-C}, divergence-type operators, and the harmonic oscillator.
\par
In this paper we generalize Theorem~\ref{thm:LS-C} to vector valued functions $f \in L^p (\R^d ; X)$ with values in an arbitrary Banach space $X$. It is formulated in Theorem~\ref{thm:LS}. Let us stress that the substantial novelty of Theorem~\ref{thm:LS} is that $X$ may be of infinite dimension. In particular, this allows to consider infinite dimensional state spaces in our application to control theory. This is the topic of the second part of our paper which we introduce in the following.
\par
We consider for $T > 0$ the linear control problem
%Let $E \subseteq \R^d$ be measurable, $p \in  [1,\infty)$ and $T > 0$. Set $\mathcal{X}^p = L^p(\R^d;X)$ and consider the controlled system
\begin{equation}\label{eq:control-intro}
\partial_ty(t) + A_p y(t) = \mathbf{1}_Eu(t), \quad y(0) = y_0 \in \mathcal{X}^p = L^p (\R^d ; X), \quad t \in [0,T] ,
\end{equation}
where $X$ is an arbitrary Banach space, $p \in [1,\infty)$, $A_p$ is a normally elliptic differential operator in $\mathcal{X}^p$, and where $E \subset \R^d$ is a thick set. 
We study null-controllability in $L^r([0,T];\mathcal{X}^p)$ with $r \in [1,\infty]$, that is, for all $y_0 \in \mathcal{X}^p$ there exists a control function $u \in L^r([0,T];\mathcal{X}^p)$ such that the mild solution $y$ of \eqref{eq:control-intro} satisfies $y (T) = 0$. A weaker variant of this is approximate null-controllability. This means that for all $\varepsilon > 0$ and all $y_0 \in \mathcal{X}^p$ there exists a control function $u \in L^r([0,T];\mathcal{X}^p)$ such that the mild solution $y$ of \eqref{eq:control-intro} satisfies $\lVert y (T) \rVert < \varepsilon$. 
\par
Null-controllability for heat-like equations is well known in the scalar-valued case $X = \C$ and $p=r=2$, see, e.g., \cite{FattoriniR-71,LebeauR-95,FursikovI-96,ErvedozaZ-11,MartinRR-14} for bounded regions $\Omega \subset \R^d$, and \cite{Teresa-97, CabanillasMZ-01, MicuZ-01a, MicuZ-01, CannarsaMV-04, Miller-05, KalimerisO-20} for unbounded regions. 
We prove in Theorem~\ref{thm:control} that for arbitrary (possibly infinite-dimensional) Banach spaces $X$, the system \eqref{eq:control-intro} is approximately null-controllable if $p=1$ and null-controllable if $p \in (1,\infty)$. As a special case of our result one may consider, e.g., a system of $n$ coupled parabolic partial differential equations (if $X = \C^n$), or problems depending on a parameter (here $X$ is a function space). For example, our results apply to strongly elliptic control systems of the form
\begin{equation*}
	\partial_t y(t) + (-A\nabla \nabla^\top)^m y(t) + B y(t) = \Eins_E u(t), \quad y(0) = y_0 \in L^p(\R^d;\C^n), \quad t \in [0,T] ,
\end{equation*}
where $A, B \in \C^{n \times n}$ are such that $(-A\nabla \nabla^\top)^m$ is strongly elliptic. For a related result, we refer to \cite{Ammar-KhodjaBDG-09}, where controllability for finite dimensional systems is studied using a suitable Kalman rank condition. As another example we consider the following setting: For $\lambda \in [0,1]$, let 
\begin{equation*}
 	A_\lambda = \sum_{i,j=1}^d a_{i,j}(\lambda)\partial_i\partial_j
\end{equation*}
with $a_{i,j} \in C[0,1]$ and consider the parameter dependent linear control problem
\begin{equation}\label{eq:control-parameter}
\partial_tz(t) + A_\lambda z(t) = \mathbf{1}_Ev(t), \quad z(0) = z_{0} \in L^p (\R^d), \quad t \in [0,T] ,
\end{equation}
where we view $A_\lambda$ as an unbounded operator in $L^p(\R^d)$. Concerning the question of null-controllability, we remark that the control function $v$ may depend on the parameter $\lambda$. Next, we reformulate this as a single linear control problem in $L^p (\R^d ; C[0,1])$.
We write $\mathbf{a}_{i,j} \in \L(C[0,1])$ for the multiplication operator given by $f \mapsto a_{i,j} f$. Consider the operator
\begin{equation*}
	A =  \sum_{i,j=1}^d \mathbf{a}_{i,j}\partial_i\partial_j
\end{equation*}
acting on $L^p(\R^d;C[0,1])$. Under certain assumptions on the coefficients $a_{i,j}$, the operator $A$ is normally elliptic and Eq.~\eqref{eq:control-intro} with $A_p$ replaced by $A$ and $X = C[0,1]$ is well posed. Therefore, the parameter dependent Eq.~\eqref{eq:control-parameter} can be rewritten in the form \eqref{eq:control-intro} with, $X = C[0,1]$, $A_p = A$ and $y_0 = z_0 \otimes \mathbf{1}_{[0,1]}$. The thick set $E \subseteq \R^d$ in Eq.~\eqref{eq:control-intro} may be chosen as in Eq.~\eqref{eq:control-parameter}.
\par
For the proof of Theorem~\ref{thm:control} we employ the classical equivalence between (approximate) null-controllability and final state observability for the adjoint problem. This follows from Douglas' lemma, see \cite{Douglas-66} in the case of Hilbert spaces, and  \cite{Embry-73,DoleckiR-77,Harte-78,CurtainP-78,Carja-85,Carja-88,Forough-14} for its generalization to  Banach spaces. The observability estimate is formulated in Theorem~\ref{thm:obs}. Its proof is based on the classical Lebeau-Robbiano strategy. For Hilbert spaces it goes back to the papers \cite{LebeauR-95,LebeauZ-98,JerisonL-99,Miller-10} and was further studied, e.g., in \cite{TenenbaumT-11,WangZ-17,BeauchardP-18,NakicTTV-20,Barcena-PetiscoZ-21}. Recently it has been adapted to Banach spaces in \cite{GallaunST-20,BombachGST-23}. The main idea of this strategy is that a so-called spectral inequality and a dissipation estimate implies an observability estimate. While the spectral inequality is provided by our vector-valued version of the Logvinenko-Sereda theorem, the dissipation estimate is derived from representing the semigroup generated by $-A_p$ as a Fourier multiplier with an operator-valued symbol.
\section{Preliminaries}\label{sec:preliminaries}
The theory of vector-valued distributions was developed by Schwartz in \cite{Schwartz-57} and \cite{Schwartz-58}. In \cite{Amann-97}, this theory was applied to study vector valued Fourier multipliers. Further results in this direction can be found in \cite{Amann-19} and \cite{HytoenenNVW-16}. It turns out that we cannot literally apply these results for our purpose, we present in this section some basic properties of vector-valued distributions and Fourier multipliers.
\par
Let $X$ be a Banach space with norm $\lVert \cdot \rVert_X$. We denote by $\D(\R^d;X)$, $\Schw(\R^d;X)$ and $\mathcal{E}(\R^d;X)$ the spaces of $X$-valued test functions, Schwartz functions and smooth functions with the usual topologies, and by $\D^\prime(\R^d;X)$, $\Schw^\prime(\R^d;X)$ and $\mathcal{E}^\prime(\R^d;X)$ the spaces of $X$-valued distributions, tempered distributions and compactly supported distributions respectively. Note that $\mathcal{F}^\prime(\R^d;X) = \L(\mathcal{F}(\R^d) ;X)$ where $\mathcal{F} \in\{\D,\Schw,\mathcal{E}\}$. We denote by $\mathcal{O}_M(\R^d;X)$ the space of slowly increasing $X$-valued functions, that is $\varphi \in \mathcal{O}_M(\R^d;X)$ if for each multi-index $\alpha$ there exist constants $C_\alpha, m_\alpha$ such that
\begin{equation*}
	\lVert \partial^\alpha\varphi(x) \rVert_X \leq C_\alpha(1 + \lvert x \rvert)^{m_\alpha}, \quad (x \in \R^d) .
\end{equation*}
For $v \in X$ and $\varphi \in \D(\R^d)$, we denote by $\varphi \otimes v$ the element of $\D(\R^d;X)$ given by
\begin{equation*}
	(\varphi \otimes v)(x) = \varphi(x)v .
\end{equation*}
The set of these functions is called the set of \emph{elementary tensors}. The set of finite linear combinations of elementary tensors is dense in $\mathcal{F}(\R^d;X)$ where $\mathcal{F} \in \{\D,\D',\Schw,\Schw',\mathcal{E},\mathcal{E}',\allowbreak\mathcal{O}_M\}$.
\par
In the usual fashion, we may extend the operations of differentiation, multiplication by smooth functions and Fourier transform to the appropriate classes of distributions by duality. In the case of the Fourier transform, this can be done as follows. We define for $z,x  \in \C^d$ the Fourier character $\e_z(x) = \euler^{\ii z\cdot x}$. Note that $\e_z \in \mathcal{E}(\R^d)$ and that $z \mapsto \e_z(x)$ is entire. We define the Fourier transform $\F:\Schw(\R^d;X) \to \Schw(\R^d;X)$ by 
\begin{equation*}
	(\F\varphi)(\xi) = \int\limits_{\R^d} \e_{-\xi}\varphi \diff x .
\end{equation*}
It is an automorphism of $\Schw(\R^d;X)$ with inverse given by
\begin{equation*}
	(\F^{-1}\varphi)(x) = \frac{1}{(2\pi)^d}\int\limits_{\R^d} \e_{x}\varphi \diff \xi .
\end{equation*} 
If $f \in \Schw^\prime(\R^d;X)$, then we define the Fourier transform $\F:\Schw^\prime(\R^d;X) \to \Schw^\prime(\R^d;X)$ by
\begin{equation*}
	(\F f)(\varphi) = f(\F\varphi), \quad (\varphi \in \Schw(\R^d))
\end{equation*}
and obtain an automorphism of $\Schw^\prime(\R^d;X)$.
\par
If $u \in \mathcal{E}^\prime(\R^d;X)$, i.e. $u$ has compact support, then $\F^{-1}u \in \mathcal{E}(\R^d;X)$. Thus we may define the inverse Fourier-Laplace transform $\L:\mathcal{E}^\prime(\R^d;X) \to C^\infty(\C^d;X)$ by
\begin{equation*}
	(\L u)(z) = (\F^{-1}(\e_{\ii \imag z }u))(\real z) .
\end{equation*}
By checking that the Cauchy-Riemann differential equations hold for $\L u$, it follows that $\L u$ is an entire function. It follows that if $f \in \Schw^\prime(\R^d;X)$ is such that $\F f \in \mathcal{E}^\prime(\R^d;X)$, then $f$ can be extended to an entire function $f:\C^d \to X$ given by $\mathcal{L}\F f$. In particular $f$ is analytic on $\R^d$.

For $i = 0,1,2$ let $X_i$ be a Banach space with norm $\lVert \cdot \rVert_{X_i}$. By a \emph{multiplication} we mean a bilinear continuous map
\begin{equation*}
	\bullet: X_1 \times X_2 \to X_0, \quad (x_1,x_2) \mapsto x_1 \bullet x_2
\end{equation*}
such that 
\begin{equation*}
	\lVert x_1 \bullet x_2 \rVert_{X_0} \leq \lVert x_1 \rVert_{X_1}\lVert x_2 \rVert_{X_2} .
\end{equation*}
In particular, we will be interested in the cases where
\begin{enumerate}[(i)]
	\item $X_1 = \C$, $X_2 = X_0$ and $\lambda \bullet x = \lambda x$,
	\item $X_1 = X_2^\prime$, $X_0 = \C$ and $x^\prime \bullet x = \dual{x^\prime,x}$,
	\item $X_1 = \L(X_2,X_0)$ and $A \bullet x = Ax$.
\end{enumerate}
Note that the first two cases can be seen as special cases of the third case.
\par
From \cite{Amann-97}, we infer that any multiplication gives rise to a unique hypocontinuous bilinear map
\begin{equation*}
	B: \mathcal{E}(\R^d ; X_1) \times \D'(\R^d ;X_2) \to \D'(\R^d ;X_0), \quad f_1 \times f_2 \mapsto B(f_1,f_2)
\end{equation*}
such that for all $\varphi_1,\varphi_2 \in \mathcal{D}(\R^d)$, $x_1 \in X_1$, $x_2 \in X_2$ we have
\begin{equation*}
	B(\varphi_1 \otimes x_1,\varphi_2 \otimes x_2) = (\varphi_1 \varphi_2) \otimes (x_1 \bullet x_2) .
\end{equation*}
Here, hypocontinuous means that it is continuous in each variable, and uniformly continuous if one of the variables is restricted a bounded set. Furthermore, the restriction $B|_{\mathcal{O}_M(\R^d;X_1) \times \Schw^\prime(\R^d;X_2)}$ is hypocontinuous as well. We write $B(f_1,f_2) = f_1 \bullet f_2$ in the following.
\par
Set $D = - \ii \nabla$. Given $m \in \mathcal{O}_M(\R^d;X_1)$, we define the Fourier multiplier
\begin{equation*}
	m(D): \Schw'(\R^d;X_2) \to \Schw'(\R^d;X_0), \quad f \mapsto \F^{-1}(m \bullet \F f) .
\end{equation*}
We note that in the special case of $X_0 = X_2 = X$, $X_1 = \L(X)$, we have
\begin{equation*}
	m_1(D)m_2(D) = (m_1m_2)(D) .
\end{equation*}
With respect to the above multiplication $\bullet$, we define the convolution $*_\bullet$ of two elementary tensors $\varphi_1 \otimes x_1$ and $\varphi_2\otimes x_2$ (with $\varphi_1,\varphi_2 \in \mathcal{D}(\R^d)$, $x_1 \in X_1$, and $x_2 \in X_2$), by
\begin{equation*}
(\varphi_1 \otimes x_1) *_\bullet (\varphi_2 \otimes x_2) = (\varphi_1 * \varphi_2) \otimes (x_1 \bullet x_2) ,
\end{equation*}
where $*$ denotes the usual convolution of scalar-valued functions.
Theorem~3.1 in \cite{Amann-97} implies that $*_\bullet$ extends to bilinear, hypocontinuous maps:
\begin{align*}
	& *_\bullet: \Schw(\R^d;X_1) \times \Schw^\prime(\R^d;X_2) \to \Schw^\prime(\R^d;X_0), \\
	& *_\bullet: \mathcal{D}^\prime(\R^d;X_1) \times \mathcal{E}^\prime(\R^d;X_2) \to \mathcal{D}^\prime(\R^d;X_0) .
\end{align*}
Moreover, according to \cite[Theorem~3.5]{Amann-97}, for $1 \leq p \leq \infty$, there is a third extension
\begin{equation*}
*_\bullet:L^1(\R^d;X_1) \times L^p(\R^d;X_2) \to L^p(\R^d;X_0), \quad (f,g) \mapsto \int\limits_{\R^d} f(\cdot - y)\bullet g(y) \diff y ,
\end{equation*}
satisfying Young's inequality
\begin{equation*}
	\lVert f *_\bullet g \rVert_{L^p(\R^d;X_0)} \leq \lVert f \rVert_{L^1(\R^d;X_1)} \lVert g \rVert_{L^p(\R^d;X_2)} .
\end{equation*}
In the following, we will suppress the symbol $\bullet$ if it is clear from the context which multiplication is being employed.
\par
Combining \cite[Theorem~4.1]{Amann-97} and \cite[Corollary~4.4]{Amann-97} we obtain
\begin{lemma}\label{lem:mult}
	Let $\varepsilon > 0$.  Then there exists $C > 0$ such that for all $\mu > 0$ and all $m \in W^{d+1, \infty}(\R^d ; X_1)$ satisfying
	\begin{equation*}
		\lVert m \rVert_{W^{d+1,\infty}} + \max\limits_{\lvert \alpha \rvert \leq d+1} \sup\limits_{\xi \in \R^d} \lvert \xi \rvert^{\lvert \alpha \rvert + \varepsilon}\lVert \partial^\alpha m(\xi) \rVert_{X_1} \leq \mu < \infty
	\end{equation*}
	we have
	\begin{equation*}
		\lVert \mathcal{F}^{-1}m \rVert_{L^1(\R^d;X_1)} \leq C \mu . 
	\end{equation*}
	In particular, it follows from Young's inequality that $m(D) \in \L(L^p(\R^d;X_2), L^p(\R^d;X_0))$ with
	\begin{equation*}
		\lVert m(D) \rVert \leq C\mu .
	\end{equation*}
\end{lemma}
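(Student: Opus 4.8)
The plan is to read the statement as a vector-valued Mihlin--Hörmander multiplier theorem of order $d+1$ and to prove the $L^1$-bound on $\F^{-1}m$ by the elementary argument that combines the crude estimate $\lVert\F^{-1}h\rVert_{L^\infty(\R^d;X_1)}\le(2\pi)^{-d}\lVert h\rVert_{L^1(\R^d;X_1)}$ with integration by parts; this needs no Plancherel theorem and hence works for an arbitrary Banach space $X_1$, and it is exactly what \cite[Theorem~4.1 and Corollary~4.4]{Amann-97} yield once our weighted hypothesis is fed in. First I would fix a Littlewood--Paley decomposition: a cut-off $\chi\in\D(\R^d)$ with $\chi\equiv1$ on $B(0,1)$ and $\spt\chi\subseteq B(0,2)$, and $\phi\in\D(\R^d)$ supported in $\{1/2\le\lvert\xi\rvert\le2\}$ with $\sum_{j\ge0}\phi(2^{-j}\xi)=1$ for $\lvert\xi\rvert\ge1$, and write $m=m\chi+\sum_{j\ge0}m_j$ with $m_j:=m(1-\chi)\,\phi(2^{-j}\,\cdot\,)$. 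Each summand lies in $W^{d+1,\infty}(\R^d;X_1)$ with compact support, hence in $L^1(\R^d;X_1)$, so its inverse Fourier transform is a genuine continuous $X_1$-valued function; it then suffices to bound $\lVert\F^{-1}(m\chi)\rVert_{L^1}$ and to bound $\lVert\F^{-1}m_j\rVert_{L^1}$ by a summable-in-$j$ multiple of $\mu$, after which $\F^{-1}m=\F^{-1}(m\chi)+\sum_j\F^{-1}m_j$ in $L^1(\R^d;X_1)$ follows from the continuity of $\F^{-1}$ on $\Schw'(\R^d;X_1)$.

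The single tool is the following: for compactly supported $h\in W^{d+1,\infty}(\R^d;X_1)$ and any $R>0$, the region $\{\lvert x\rvert\le R\}$ is handled by $\lVert\F^{-1}h\rVert_{L^\infty}\le(2\pi)^{-d}\lVert h\rVert_{L^1}$, while for $\lvert\alpha\rvert=d+1$ the identity $\lVert\F^{-1}(\partial^\alpha h)(x)\rVert_{X_1}=\lvert x^\alpha\rvert\,\lVert\F^{-1}h(x)\rVert_{X_1}$ together with $\lvert x\rvert^{d+1}\le C\sum_{\lvert\alpha\rvert=d+1}\lvert x^\alpha\rvert$ gives $\lvert x\rvert^{d+1}\lVert\F^{-1}h(x)\rVert_{X_1}\le C\sum_{\lvert\alpha\rvert=d+1}\lVert\partial^\alpha h\rVert_{L^1}$, so that $\{\lvert x\rvert>R\}$ contributes an integrable tail since $d+1>d$. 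For $m\chi$ I would apply this with $R=1$, using the Leibniz rule and $\spt\chi\subseteq B(0,2)$ to get $\lVert\partial^\alpha(m\chi)\rVert_{L^1}\le C\lVert m\rVert_{W^{d+1,\infty}}\le C\mu$. For $m_j$, supported where $\lvert\xi\rvert\sim2^j$, the weighted hypothesis gives $\lVert\partial^\gamma m(\xi)\rVert_{X_1}\le\mu\lvert\xi\rvert^{-(\lvert\gamma\rvert+\varepsilon)}\le C\mu\,2^{-j(\lvert\gamma\rvert+\varepsilon)}$ for $\lvert\gamma\rvert\le d+1$; since $\lVert\partial^\nu[\phi(2^{-j}\cdot)]\rVert_{L^\infty}\le C\,2^{-j\lvert\nu\rvert}$ and since $\partial^\delta(1-\chi)$ for $\lvert\delta\rvert\ge1$ is supported in a fixed annulus and so affects only boundedly many $j$, the Leibniz rule yields $\lVert\partial^\alpha m_j\rVert_{L^\infty}\le C\mu\,2^{-j(\lvert\alpha\rvert+\varepsilon)}$ uniformly in $j\ge0$ for $\lvert\alpha\rvert\le d+1$. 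With $\lvert\spt m_j\rvert\sim2^{jd}$ and the choice $R=2^{-j}$ this produces $\lVert\F^{-1}m_j\rVert_{L^1}\le C\mu\,2^{-j\varepsilon}$, and summing the geometric series over $j\ge0$ gives $\lVert\F^{-1}m\rVert_{L^1(\R^d;X_1)}\le C\mu$ with $C$ depending only on $d$ and $\varepsilon$.

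For the last assertion I would use the convolution identity $m(D)f=\F^{-1}(m\bullet\F f)=(\F^{-1}m)*_\bullet f$ --- valid first on elementary tensors and then in general by the density of finite linear combinations of such together with the hypocontinuity of $\bullet$ and $*_\bullet$ recalled in Section~\ref{sec:preliminaries} --- combined with the vector-valued Young inequality stated there, which gives $\lVert m(D)f\rVert_{L^p(\R^d;X_0)}\le\lVert\F^{-1}m\rVert_{L^1(\R^d;X_1)}\lVert f\rVert_{L^p(\R^d;X_2)}\le C\mu\lVert f\rVert_{L^p(\R^d;X_2)}$, so that $m(D)\in\L(L^p(\R^d;X_2),L^p(\R^d;X_0))$ with $\lVert m(D)\rVert\le C\mu$. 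There is no analytic difficulty in this scheme; the point requiring care --- and the reason one cannot lower the Mihlin order below $d+1$ for a general Banach space --- is the bookkeeping in the dyadic step: one must check that every Leibniz bound on $\partial^\alpha m_j$ is uniform in $j$ (this is where the fixed transition annulus of $\chi$ enters) and that $\varepsilon$ enters the final constant only through $\sum_{j\ge0}2^{-j\varepsilon}$.
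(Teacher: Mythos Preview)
Your argument is correct, and it is precisely the self-contained version of what the paper invokes: the paper does not give its own proof of this lemma but obtains it by combining \cite[Theorem~4.1]{Amann-97} with \cite[Corollary~4.4]{Amann-97}, and your dyadic decomposition together with the crude $L^1\to L^\infty$ bound on $\F^{-1}$ and integration by parts in frequency is exactly the mechanism behind those cited results. The only point worth recording for the reader is the one you already flag --- that the $(1-\chi)$-derivatives affect only boundedly many $j$, so the Leibniz estimate $\lVert\partial^\alpha m_j\rVert_{L^\infty}\le C\mu\,2^{-j(\lvert\alpha\rvert+\varepsilon)}$ is indeed uniform in $j$ --- and that the identification of the $L^1$-limit with $\F^{-1}m$ goes through $\Schw'$-convergence of the partial sums, which you handle correctly.
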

Following \cite[Theorem~2.3]{Amann-97}, we can define a hypocontinuous bilinear mapping $[\cdot,\cdot]_\bullet:\mathcal{S}^\prime(\R^d ;X_1) \times \mathcal{S}(\R^d ;X_2) \to X_0$ by setting
 \[
 	\left[f \otimes x_1,\varphi \otimes x_2 \right]_\bullet = \dual{f,\varphi}_{\Schw^\prime(\R^d) \times \Schw(\R^d)}x_1 \bullet x_2
 \]
for elementary tensors given by $f \in \Schw^\prime(\R^d), \varphi \in \Schw(\R^d), x_1 \in X_1, x_2 \in X_2$ and extending by density. As before, we will suppress the notation of $\bullet$ when it is clear from the context which multiplication is being employed.
It follows that
\begin{equation}\label{eq:parseval}
 	\left[\mathcal{F}f,\varphi\right] = \left[f,\mathcal{F}\varphi\right] ,
\end{equation}
i.e. the Fourier transform is symmetric with respect to this form. If $f \in L^1_\loc(\R^d;X_1)$ and $\varphi \in \D(\R^d;X_2)$, we have
\begin{equation*}
	\left[f,\varphi \right] = \int\limits_{\R^d} f(x)\varphi(x) \diff x .
\end{equation*}
\par
In the following, we specialize to the case $X_2 = X$, $X_1 = X^\prime$ and $x_1 \bullet x_2 = \dual{x_1,x_2}$.  Suppose that $m \in \mathcal{O}_M(\R^d;\L(X))$ and consider $m(D):\mathcal{S}(\R^d ;X) \to \mathcal{S}(\R^d ;X)$. It is clear that the symbol $m^\prime(-\cdot)$ given by $\R^d \ni \xi \mapsto m(-\xi)^\prime \in \L(X^\prime)$ belongs to $\mathcal{O}_M(\R^d;\L(X^\prime))$. Here, $m(-\xi)^\prime$ denotes the adjoint operator of $m(-\xi)$. For any Banach space $Y$ and $f \in \mathcal{S}^\prime(\R^d ;Y)$, $\varphi \in \mathcal{S}(\R^d;Y)$, we set $R\varphi = \varphi(-\cdot)$ and define $Rf \in \mathcal{S}^\prime(\R^d ;Y)$ by $Rf(\psi) = f(R\psi)$ where $\psi \in \mathcal{S}(\R^d ;Y)$. Using that $\mathcal{F}^{-1}f = (2\pi)^{-d}\mathcal{F}Rf$ for $f \in \mathcal{S}^\prime(\R^d;X)$, we deduce from $\eqref{eq:parseval}$ that
\begin{equation*}
	[m'(-D)f,\varphi] = [f,m(D)\varphi] .
\end{equation*}
In particular, if $f \in L^p(\R^d;X^\prime)$ where $1 \leq p < \infty$ and $\varphi \in \D(\R^d)$ we deduce
\begin{equation*}
\int\limits_{\R^d} \dual{(m^\prime(-D)f(x),\varphi(x)} \diff x = \int\limits_{\R^d} \dual{f(x),m(D)\varphi(x)} \diff x .
\end{equation*}
We may therefore deduce the following result, which will be important when relating our observability estimate to null controllability:
\begin{proposition}\label{adjoint-multiplier}
	Let $q$ be such that $p^{-1} + q^{-1} = 1$. Let $X$ be a Banach space such that $X^\prime$ has the Radon-Nikodym property and $1 \leq p  < \infty$. Let $m \in \mathcal{O}_M(\L(X))$ such that
	\begin{equation*}
		\lVert \F^{-1}m \rVert_{L^1(\R^d;\L(X))} < \infty  .
	\end{equation*}
	Then $m(D)^\prime = m^\prime(-D) \in \mathcal{L}(L^{q}(\R^d;X^\prime)$ with
	\begin{equation*}
		\lVert m(D)^\prime \rVert \leq \lVert \F^{-1}m \rVert_{L^1(\R^d;\L(X))} .
	\end{equation*}.
\end{proposition}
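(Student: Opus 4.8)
The plan is to first show that $m(D)$ extends to a bounded operator on $L^p(\R^d;X)$ with norm at most $\lVert \F^{-1}m \rVert_{L^1(\R^d;\L(X))}$, then to pass to its Banach-space adjoint and to identify this adjoint with $m^\prime(-D)$. For the boundedness I would use that $\F^{-1}m \in L^1(\R^d;\L(X))$ by hypothesis and that the Fourier multiplier acts by convolution, $m(D)f = \F^{-1}(m \bullet \F f) = (\F^{-1}m) *_\bullet f$, with $\bullet$ the multiplication $\L(X) \times X \to X$; Young's inequality for $*_\bullet$ then gives
\begin{equation*}
\lVert m(D)f \rVert_{L^p(\R^d;X)} \leq \lVert \F^{-1}m \rVert_{L^1(\R^d;\L(X))} \lVert f \rVert_{L^p(\R^d;X)} ,
\end{equation*}
so that $m(D)$ extends as claimed (this is essentially the conclusion of Lemma~\ref{lem:mult}).

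Next, since $1 \leq p < \infty$ and $X^\prime$ has the Radon--Nikodym property, the dual of $L^p(\R^d;X)$ is isometrically identified with $L^q(\R^d;X^\prime)$ through the pairing $(g,h) \mapsto \int_{\R^d} \dual{g(x),h(x)} \diff x$; this is exactly where the hypothesis on $X^\prime$ is used. Hence the Banach-space adjoint $m(D)^\prime$ is a bounded operator on $L^q(\R^d;X^\prime)$ with $\lVert m(D)^\prime \rVert = \lVert m(D) \rVert \leq \lVert \F^{-1}m \rVert_{L^1(\R^d;\L(X))}$, which gives the asserted norm bound as soon as we know $m(D)^\prime = m^\prime(-D)$.

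To prove that last identity I would fix $f \in L^q(\R^d;X^\prime)$ and $\varphi \in \D(\R^d;X)$, specialize the relation $[m^\prime(-D)f,\varphi] = [f,m(D)\varphi]$ derived above, and compute, using the integral representation of $[\cdot,\cdot]$ on integrable functions,
\begin{align*}
[m^\prime(-D)f,\varphi] &= [f,m(D)\varphi] = \int_{\R^d} \dual{f(x),m(D)\varphi(x)} \diff x \\
&= \int_{\R^d} \dual{m(D)^\prime f(x),\varphi(x)} \diff x = [m(D)^\prime f,\varphi] ,
\end{align*}
the middle equality being the definition of the Banach-space adjoint under the identification $(L^p(\R^d;X))^\prime = L^q(\R^d;X^\prime)$. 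Since $\D(\R^d;X)$ is dense in $\Schw(\R^d;X)$ and both $m^\prime(-D)f$ and $m(D)^\prime f$ define continuous functionals on $\Schw(\R^d;X)$, this forces $m^\prime(-D)f = m(D)^\prime f$ in $\Schw^\prime(\R^d;X^\prime)$; in particular $m^\prime(-D)f$ is represented by the $L^q(\R^d;X^\prime)$-function $m(D)^\prime f$, whence $m^\prime(-D) = m(D)^\prime \in \L(L^q(\R^d;X^\prime))$ with the claimed norm. I expect the only genuinely delicate point --- beyond routine bookkeeping with the vector-valued distribution spaces and the convolution identity for $m(D)$ --- to be the duality $(L^p(\R^d;X))^\prime = L^q(\R^d;X^\prime)$, which truly relies on the Radon--Nikodym property of $X^\prime$ together with $p < \infty$; the remaining ingredients, Young's inequality for $*_\bullet$ and the Parseval relation \eqref{eq:parseval}, are already in hand.
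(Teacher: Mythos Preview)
Your proposal is correct and follows essentially the same route as the paper: both use the Radon--Nikodym duality $(L^p(\R^d;X))' \simeq L^q(\R^d;X')$, the identity $[m'(-D)f,\varphi] = [f,m(D)\varphi]$ established just before the proposition, and a density argument to pass from test functions to all of $L^p(\R^d;X)$. The only cosmetic difference is in the norm bound: you deduce it immediately from $\lVert m(D)' \rVert = \lVert m(D) \rVert$ and Young's inequality for $m(D)$, whereas the paper instead verifies pointwise that $[(\F^{-1}m)(x)]' = (\F^{-1}m')(x)$, hence $\lVert \F^{-1}m'(-\cdot) \rVert_{L^1(\R^d;\L(X'))} = \lVert \F^{-1}m \rVert_{L^1(\R^d;\L(X))}$, and then applies Young's inequality to $m'(-D)$ directly. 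Your version is slightly cleaner in that it works at the level of tempered distributions (so $m'(-D)f \in \Schw'(\R^d;X')$ is automatic) before concluding it is represented by the $L^q$-function $m(D)'f$; the paper's phrasing implicitly already treats $m'(-D)f$ as an $L^q$-function when writing the $L^q \times L^p$ pairing, which is justified a posteriori by the kernel computation.
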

Before we proceed with the proof, let us recall that, as in the scalar case, we have the convolution identity
\begin{equation*}
	\F(f*g) = \F f \F g, \quad (f \in L^1(\R^d;\L(X)), g \in L^p(\R^d;X))\,.
\end{equation*}
This identity can be verified first on elementary tensors and then established in the general case by a density argument. Thus, it follows from Fourier inversion, the above identity and Young's inequality that
\begin{equation*}
	\lVert m(D) \rVert_{\L(L^p(\R^d;X))} \leq \lVert \F^{-1}m \rVert_{L^1(\R^d;\L(X))}\,.
\end{equation*}
\begin{proof}
	From  the Radon-Nikodym property of $X'$ we have that $L^p(\R^d;X)^\prime \simeq L^{q}(\R^d;X^\prime)$ and
	\begin{equation*}
		\dual{f,g}_{ L^{q}(\R^d;X^\prime) \times L^p(\R^d;X)} = \int\limits_{\R^d} \dual{f(x),g(x)}_{X^\prime \times X} \diff x, \quad (f,g) \in L^{q}(\R^d;X^\prime) \times L^p(\R^d;X) .
	\end{equation*}
	In particular, if $\varphi \in \mathcal{D}(\R^d ;X)$ it holds that
	\begin{equation*}
		\dual{m^\prime(-D)f,\varphi}_{ L^{q}(\R^d;X^\prime) \times L^p(\R^d;X)} = \dual{f,m(D)\varphi}_{ L^{q}(\R^d;X^\prime) L^p(\R^d;X)} .
	\end{equation*}
	Now, let $(f,g) \in L^{q}(\R^d;X^\prime) \times L^p(\R^d;X)$. Since $\Schw(\R^d;X)$ is dense in $L^p(\R^d;X)$, we can choose a sequence $(\varphi_k)_{k=0}^\infty \in \Schw(\R^d;X)^{\N}$ such that
	\begin{equation*}
		\lVert \varphi_k - g \rVert_{L^p(\R^d;X)} \to 0 \quad (k \to \infty) .
	\end{equation*}
	Thus, $m(D)\varphi_k \to m(D)g$ in $L^p(\R^d;X)$ as $k \to \infty$ and since
	\begin{equation*}
	\dual{m^\prime(-D)f,g}_{ L^{q}\times L^p} = \dual{f, m(D)\varphi_k}_{ L^{q}\times L^p} + \dual{m^\prime(-D)f,g - \varphi_k}_{ L^{q} \times L^p}
	\end{equation*}
	it follows that
	\begin{equation*}
	\dual{m^\prime(-D)f,g}_{ L^{q}\times L^p} = \dual{f, m(D)g}_{ L^{q}\times L^p}
	\end{equation*}
	which proves $m(D)^\prime = m^\prime(-D)$.
	\par
	Let $x \in \R^d$. It follows that for $(\ell,v) \in X^\prime \times X$
	\begin{align*}
		\dual{[(\F^{-1} m)(x)]^\prime \ell, v} = \dual {\ell,  (\F^{-1} m)(x)v}  &= \frac{1}{(2\pi)^d} \int\limits_{\R^d} e^{ \ii \xi \cdot x}\dual{\ell,m(\xi)v} \diff \xi \\ &= \frac{1}{(2\pi)^d} \int\limits_{\R^d} e^{\ii \xi \cdot x}\dual{m(\xi)^\prime\ell,v} \diff \xi  \\
		& = \dual{(\F^{-1} m^\prime)(x)\ell, v} .
	\end{align*}
	Therefore, since $m(D)^\prime = m^\prime(-D)$  we obtain
	\begin{equation*}
		\lVert \mathcal{F}^{-1}m'(-\cdot) \rVert_{L^1(\R^d;\L(X^\prime))} = \lVert(\mathcal{F}^{-1}m)^{\prime}\rVert_{L^1(\R^d;\L(X^\prime))} = \lVert\mathcal{F}^{-1}m\rVert_{L^1(\R^d;\L(X))} \leq k
	\end{equation*}
	and the result follows.
\end{proof}
\section{Logvinenko-Sereda theorem for vector-valued functions}
Let $X$ be a Banach space with norm $\lVert \cdot \rVert_X$. In order to formulate our main result we recall the notion of a $(\rho,L)$-thick subset $\thickset$ of $\R^d$, and the notation $\Pi_\lambda$ for the parallelepiped with side lengths $\lambda_i$, $i \in \{1,2,\ldots , d\}$, cf.\ Eqs.~\eqref{eq:thick-set} and \eqref{eq:parallelepiped} in the introduction. For $f \in L^p (\R^d ; X)$ we denote by $\F f$ its Fourier transform, cf.\ Section~\ref{sec:preliminaries}.
\begin{theorem}\label{thm:LS}
There exists a constant $C_{\mathrm{LS}} \geq 1$ such that for all $p \in [1,\infty]$, all $\lambda \in (0,\infty)^d$, all $f \in L^p (\R^d ; X)$ with $\spt \mathcal{F} f \subseteq \Pi_\lambda$, all $\rho > 0$, all $L \in (0,\infty)^d$, and all $(\rho , L)$-thick sets $\thickset \subseteq \R^d$ we have 
\begin{equation*} %\label{eq:thm-LS}
\lVert \Eins_E f \rVert_{L^p (\R^d ; X)} \geq 
\left( \frac{\rho}{C_{\mathrm{LS}} } \right)^{C_{\mathrm{LS}} (d + L\cdot \lambda) }  \lVert f \rVert_{L^p (\R^d ; X)} .
\end{equation*}
\end{theorem}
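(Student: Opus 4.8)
The plan is to follow Kovrijkine's proof of the scalar Logvinenko--Sereda inequality, run throughout for the nonnegative scalar function $x \mapsto \lVert f(x)\rVert_X$ in place of $\lvert f(x)\rvert$, and to reduce the one genuinely complex-analytic ingredient --- a Remez/Tur\'an-type sublevel-set estimate --- to the scalar case by testing against a single, well-chosen functional on each cube.

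First I would record that $f$ may be taken entire: since $f \in L^p(\R^d;X) \subseteq \Schw'(\R^d;X)$ and $\F f$ has compact support, hence lies in $\mathcal{E}'(\R^d;X)$, Section~\ref{sec:preliminaries} gives that $f$ agrees a.e.\ with the entire function $\mathcal{L}\F f$; write $F$ for this extension, so that $x \mapsto \lVert f(x)\rVert_X$ is continuous and all suprema below are finite. Next I would establish a vector-valued Bernstein inequality: with $\chi$ a fixed cutoff equal to $1$ on $\Pi_\lambda$, one has $\partial^\alpha f = \bigl(\F^{-1}[(\ii\xi)^\alpha\chi(\xi)]\bigr) *_\bullet f$, and since the kernel is scalar, Young's inequality $L^1 *_\bullet L^p \to L^p$ from Section~\ref{sec:preliminaries} gives $\lVert \partial^\alpha f\rVert_{L^p(\R^d;X)} \le \prod_i (C\lambda_i)^{\alpha_i}\lVert f\rVert_{L^p(\R^d;X)}$ with a dimensional constant $C$. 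Then I would tile $\R^d$ by boxes $Q$ whose $i$-th side length is $N_iL_i$, with $N_i \in \N$ chosen so that $1 \le N_iL_i\lambda_i \le 1 + L_i\lambda_i$; by $(\rho,L)$-thickness each such box satisfies $\lvert E\cap Q\rvert \ge \rho\lvert Q\rvert$, and $R_Q := \sum_i N_iL_i\lambda_i$ satisfies $d \le R_Q \le d + L\cdot\lambda$. Call $Q$ \emph{good} if $\int_{\widehat Q}\lVert f\rVert_X^p \le A\int_Q\lVert f\rVert_X^p$, where $\widehat Q$ is the doubled box and $A$ a dimensional constant; a bounded-overlap argument applied to the measure $\lVert f(x)\rVert_X^p\,\diff x$ shows that the good boxes carry at least half of $\lVert f\rVert_{L^p(\R^d;X)}^p$. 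On a good box, the vector Bernstein inequality together with a Bernstein--Nikolskii estimate and goodness yield, for a complex neighbourhood $\widetilde Q_\C$ of $Q$ of coordinatewise width $\sim 1/\lambda_i$,
\begin{equation*}
M_Q := \sup_{z\in\widetilde Q_\C}\lVert F(z)\rVert_X \;\le\; \euler^{cR_Q}\,m_Q, \qquad m_Q := \sup_{x\in Q}\lVert F(x)\rVert_X .
\end{equation*}

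The crucial and genuinely new step is the local lower bound on a good box $Q$ (the case $m_Q=0$ being trivial). Choose $x_0\in Q$ with $\lVert F(x_0)\rVert_X \ge \tfrac12 m_Q$ and, by Hahn--Banach, $\ell\in X'$ with $\lVert \ell\rVert \le 1$ and $\langle \ell, F(x_0)\rangle = \lVert F(x_0)\rVert_X$. The scalar function $\phi := \langle \ell, F(\cdot)\rangle$ is entire, equals $\langle \ell, f(\cdot)\rangle$ a.e., and satisfies $\lvert\phi(x)\rvert \le \lVert F(x)\rVert_X$ pointwise, $\sup_Q\lvert\phi\rvert \ge \tfrac12 m_Q$, and $\sup_{\widetilde Q_\C}\lvert\phi\rvert \le M_Q$, so $\sup_{\widetilde Q_\C}\lvert\phi\rvert / \sup_Q\lvert\phi\rvert \le 2\euler^{cR_Q}$. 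Applying the scalar Remez/Tur\'an-type sublevel estimate (cf.\ \cite{Kovrijkine-01}) to $\phi$ gives $\lvert\{x\in Q : \lvert\phi(x)\rvert < \delta m_Q\}\rvert \le C_d\,\delta^\kappa\lvert Q\rvert$ with $\kappa$ comparable to $1/R_Q$, and since $\{x\in Q : \lVert F(x)\rVert_X < \delta m_Q\} \subseteq \{x\in Q : \lvert\phi(x)\rvert < \delta m_Q\}$ the same bound holds with $\lVert F\rVert_X$ in place of $\lvert\phi\rvert$. Taking $\delta = (\rho/2C_d)^{1/\kappa}$, the set $(E\cap Q)\setminus\{x\in Q : \lVert F(x)\rVert_X < \delta m_Q\}$ has measure $\ge \tfrac\rho2\lvert Q\rvert$ and there $\lVert f\rVert_X \ge \delta m_Q$; combined with the trivial bound $\int_Q\lVert f\rVert_X^p \le m_Q^p\lvert Q\rvert$ this gives
\begin{equation*}
\int_{E\cap Q}\lVert f(x)\rVert_X^p\,\diff x \;\ge\; \frac{\rho}{2}\,\delta^p\int_Q\lVert f(x)\rVert_X^p\,\diff x \;\ge\; \Bigl(\frac{\rho}{C}\Bigr)^{Cp(d+L\cdot\lambda)}\int_Q\lVert f(x)\rVert_X^p\,\diff x .
\end{equation*}
Summing over the good boxes (which carry at least half the mass) and taking $p$-th roots --- using that a base in $(0,1]$ raised to a power $\le 1$ only increases, so the resulting constant is independent of $p$ --- yields Theorem~\ref{thm:LS}, with the case $p=\infty$ handled identically after replacing the sum over good boxes by the supremum over a single good box of near-maximal $L^\infty$-norm.

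The main obstacle is exactly this local step: for an $X$-valued $f$ one cannot recover $\lVert f\rVert_{L^p(\R^d;X)}$, nor its restriction to a box, from the scalar functions $\langle \ell, f\rangle$, so the naive idea of applying one functional and invoking the scalar theorem fails for the global estimate (it works only when $X$ is Hilbert and $p=2$, via an orthonormal basis). What rescues the argument is that the complex analysis need only be carried out in the \emph{supremum} norm on each box, where a single functional $\ell$ realizing the pointwise maximum of $\lVert F\rVert_X$ on $Q$ up to a factor $2$ suffices, together with the trivial inclusion of sublevel sets $\{\lVert F\rVert_X < t\}\subseteq\{\lvert\langle \ell, F\rangle\rvert < t\}$; the passage back to $L^p$ then uses only the elementary two-sided bounds $\int_Q\lVert f\rVert_X^p \le m_Q^p\lvert Q\rvert$ and $\int_{E\cap Q}\lVert f\rVert_X^p \ge (\delta m_Q)^p\,\bigl\lvert(E\cap Q)\setminus\{\lVert F\rVert_X<\delta m_Q\}\bigr\rvert$. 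Everything else in Kovrijkine's scheme --- the choice of boxes, the good/bad dichotomy, the bounded-overlap summation, and the Bernstein and Bernstein--Nikolskii inequalities --- involves only the scalar function $\lVert f(\cdot)\rVert_X$ and scalar convolution kernels, hence transfers without change beyond invoking the vector-valued Young inequality from Section~\ref{sec:preliminaries}.
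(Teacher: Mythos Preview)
Your overall strategy is the paper's: follow Kovrijkine's scheme, and reduce the one genuinely complex-analytic ingredient --- the Remez/Tur\'an lower bound --- to the scalar case by testing against a Hahn--Banach functional realising $\lVert F(x_0)\rVert_X$. This is exactly Proposition~\ref{prop:funktionentheorie}, and your remark that the sublevel-set inclusion $\{\lVert F\rVert_X<t\}\subseteq\{\lvert\langle\ell,F\rangle\rvert<t\}$ transports the scalar sublevel bound back to the vector-valued function is the heart of the matter.

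There is, however, a genuine gap at the step ``on a good box, Bernstein plus Bernstein--Nikolskii plus goodness yield $M_Q\le\euler^{cR_Q}m_Q$''. Your good/bad criterion is the \emph{doubling} condition $\int_{\widehat Q}\lVert f\rVert_X^p\le A\int_Q\lVert f\rVert_X^p$, and this controls only the $L^p$-mass on the doubled box; it gives no pointwise information on the derivatives $\partial^\alpha f$ in $Q$, nor on the mass on further dilates $2^kQ$. To bound the complex extension on $\widetilde Q_\C$ you need either Taylor control of all derivatives at some point of $Q$, or control of the tails in a convolution representation $F(z)=\int\Phi_\lambda(z-y)f(y)\,\diff y$ --- and a single doubling supplies neither. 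The paper sidesteps this by using a different notion of ``good'': a cube $\Lambda_k$ is good when $\lVert\Eins_{\Lambda_k}\partial^\alpha f\rVert_{L^p}<2^dA^{\lvert\alpha\rvert}(C_2\lambda)^\alpha\lVert\Eins_{\Lambda_k}f\rVert_{L^p}$ for \emph{every} multi-index $\alpha$. The global vector Bernstein inequality (Proposition~\ref{prop:Bernstein}) plus a summation argument shows that good cubes still carry a fixed fraction of the mass, and on a good cube a pigeonhole gives a point where \emph{all} derivatives are controlled (Lemma~A.1); the Taylor expansion at that point then yields the complex bound directly. If you replace your doubling criterion by this derivative-based one, the rest of your plan goes through. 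A minor further difference: the paper reduces the Remez step to one real variable via spherical coordinates before invoking the scalar lemma from \cite{Kovrijkine-01}, whereas you appeal to a $d$-dimensional sublevel estimate; the latter is not stated in \cite{Kovrijkine-01}, so you would either have to supply it or insert the same one-dimensional reduction.
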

In the case where $X = \C$ this theorem was originally proven by Logvinenko and Sereda in \cite{LogvinenkoS-74} and significantly improved by Kovrijkine in \cite{Kovrijkine-00,Kovrijkine-01}. For further references concerning the case $X=\C$ we refer to the introduction. Let us stress that the essential improvement of Theorem~\ref{thm:LS} is reflected in the (possible) infinite dimensionality of the Banach space $X$. To this end, let us consider the following example.
\begin{example}
 Let $I$ be a countable index set, and consider for $i \in I$ the functions $f_i \in L^p (\R^d)$ with $\spt \F f_i \subset \Pi_\lambda$ for some $\lambda \in (0,\infty)$. Thus, the classical Logvinenko-Sereda theorem (i.e.\ $X=\C$) applies to each $f_i$ separately. Now we assume that the pointwise supremum $g : \R^d \to \R$, 
 \[
  g (x) = \sup \{\lvert f_i (x) \rvert \colon i \in I\} 
 \]
 is in $L^p (\R^d)$.
 Then, Theorem~\ref{thm:LS-C} with $X = \ell^\infty (I)$ applied to the function $f : \R^d \to \ell^\infty (I)$, $(f(x))_i = f_i (x)$, gives
 \[
 \lVert \Eins_E g \rVert_{L^p (\R^d)}
 = \left(\int_E  \lVert f(x) \rVert_{\ell^\infty (I)}^p \diff x\right)^{1/p}
 \geq 
\left( \frac{\rho}{C_{\mathrm{LS}} } \right)^{C_{\mathrm{LS}} (d + L\cdot \lambda) }\lVert  g \rVert_{L^p (\R^d)} .
 \]
Indeed, if the index set $I$ is finite, it is feasible to conclude this estimate directly from the classical Logvinenko-Sereda theorem ($X=\C$) with a constant depending on the cardinality of $I$. If the cardinality of $I$ is infinite, our Theorem~\ref{thm:LS} applies. 
\end{example}
For the proof of Theorem~\ref{thm:LS} we will follow the main strategy given in \cite{Kovrijkine-00}. However, in order to deal with Banach space valued functions instead of $\C$-valued functions we shall need two preparatory results, i.e.\ Proposition~\ref{prop:funktionentheorie} and Proposition~\ref{prop:Bernstein}, which we formulate next. The final proof of Theorem~\ref{thm:LS} is postponed to the appendix.
\par
For $z \in \C$ and $r > 0$ we denote by $\disc{z}{r} \subseteq \C$ the open disc of radius $r$ centered at $z$. As well, let $\ball{x}{r} \subseteq \R^d$ be the ball of radius $r$ centered at $x \in \R^d$. If $z = 0$ or $x = 0$, respectively, we simply write $\discc{r}$ or $\ballc{r}$.
\begin{proposition} \label{prop:funktionentheorie}
There exists a constant $C_1 \geq 1$ such that for all closed intervals $I \subseteq \R$ with $0 \in I$ and $\lvert I \rvert = 1$, all analytic functions $f:\discc{6} \to X$ satisfying
\[
\sup_{z \in \discc{5}}\lVert f(z) \rVert_X \leq M 
\quad\text{and}\quad
\sup_{x \in I} \lVert f(x) \rVert_X \geq 1 
\]
for some $M>0$, and all measurable $A \subseteq I$ we have
\begin{equation}\label{eq:com_vec}
\sup_{x \in A} \lVert f(x) \rVert_X \geq	\left( \frac{\lvert A \rvert}{C_1} \right)^{\frac{\ln(M)}{\ln(2)}} \sup_{x \in I} \lVert f(x) \rVert_X.
\end{equation}
\end{proposition}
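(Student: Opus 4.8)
The plan is to reduce the statement to the scalar-valued case, which is essentially due to Kovrijkine \cite{Kovrijkine-00,Kovrijkine-01}, by composing $f$ with a norming linear functional. Since $f$ is analytic on $\discc{6}$ it is continuous there, and $I$ is a compact subset of $[-1,1]\subseteq\discc{5}$, so $m:=\sup_{x\in I}\lVert f(x)\rVert_X$ is attained at some $x_0\in I$, and $1\le m\le M$. By the Hahn--Banach theorem choose $\ell\in X^\prime$ with $\lVert\ell\rVert_{X^\prime}=1$ and $\dual{\ell,f(x_0)}=\lVert f(x_0)\rVert_X=m$, and set $g(z):=\dual{\ell,f(z)}$ for $z\in\discc{6}$. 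Then $g$ is analytic, $\sup_{z\in\discc{5}}\lvert g(z)\rvert\le\sup_{z\in\discc{5}}\lVert f(z)\rVert_X\le M$, and $\sup_{x\in I}\lvert g(x)\rvert\ge\lvert g(x_0)\rvert=m$; moreover $\lvert g(x)\rvert\le\lVert f(x)\rVert_X$ for every $x$, so $\sup_{x\in A}\lVert f(x)\rVert_X\ge\sup_{x\in A}\lvert g(x)\rvert$. It therefore suffices to prove the scalar statement: for every analytic $g:\discc{6}\to\C$ with $\sup_{\discc{5}}\lvert g\rvert\le M$ and $\sup_I\lvert g\rvert\ge1$ one has $\sup_A\lvert g\rvert\ge(\lvert A\rvert/C_1)^{\ln(M)/\ln(2)}\sup_I\lvert g\rvert$; applying this to our $g$ and using $\sup_I\lvert g\rvert\ge m=\sup_I\lVert f\rVert_X$ then yields \eqref{eq:com_vec} with the same constant $C_1$.

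For the scalar estimate one follows Kovrijkine. Dividing $g$ by $\sup_I\lvert g\rvert$ (which only lowers $M$ and, since $\lvert A\rvert\le1\le C_1$, does not weaken the claim) we may assume $\sup_I\lvert g\rvert=1$, attained at some $x_0\in I$. First, Jensen's formula on $\disc{x_0}{4}\subseteq\discc{5}$, counting the $n$ zeros of $g$ in $\disc{x_0}{2}$, gives $n\ln 2\le\ln\bigl(\sup_{\disc{x_0}{4}}\lvert g\rvert\bigr)\le\ln M$, i.e.\ $n\le\ln(M)/\ln(2)$. Second, fix $\rho\in(3/2,2)$ with no zero of $g$ on $\partial\disc{x_0}{\rho}$ and factor $g=B\cdot h$ on $\disc{x_0}{\rho}$, where $B$ is the finite Blaschke product of $\disc{x_0}{\rho}$ carrying the zeros of $g$ inside it (so $\lvert B\rvert=1$ on $\partial\disc{x_0}{\rho}$ and $\lvert h\rvert=\lvert g\rvert$ there) and $h$ is analytic and zero-free in a slightly larger disc. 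Using $\lvert g\rvert\le M$ on $\discc{5}$ together with the sub-mean value inequality $\tfrac{1}{2\pi}\int_0^{2\pi}\ln\lvert g(x_0+\rho\euler^{\ii\theta})\rvert\diff\theta\ge\ln\lvert g(x_0)\rvert=0$, the integral of $\bigl|\ln\lvert g\rvert\bigr|$ over $\partial\disc{x_0}{\rho}$ is at most $4\pi\ln M$, so the Poisson representation of the harmonic function $\ln\lvert h\rvert$ gives $M^{-c}\le\lvert h(z)\rvert\le M^{c}$ for all $z\in I\subseteq\disc{x_0}{1}$, with an absolute $c>0$. Third, on $I$ the denominators of the Blaschke factors stay between two absolute positive constants, so $\lvert B\rvert$ equals $\lvert P\rvert$ times a factor lying in $[c_0^n,c_0^{-n}]$ for an absolute $c_0\in(0,1)$, where $P$ is the monic polynomial of degree $\le n$ with the same zeros; the Remez inequality then gives $\sup_I\lvert P\rvert\le(C/\lvert A\rvert)^n\sup_A\lvert P\rvert$ with an absolute $C$. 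Combining these bounds one obtains $\sup_A\lvert g\rvert\ge M^{-2c}(c'\lvert A\rvert)^{n}$ for an absolute $c'\in(0,1]$; since $n\le\ln(M)/\ln(2)$ and $M^{-2c}=(2^{-2c})^{\ln(M)/\ln(2)}$, the right-hand side is at least $(2^{-2c}c'\lvert A\rvert)^{\ln(M)/\ln(2)}=(\lvert A\rvert/C_1)^{\ln(M)/\ln(2)}$ with $C_1:=(2^{-2c}c')^{-1}\ge1$, which (as $\sup_I\lvert g\rvert=1$) is the claim.

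The genuinely new step is the Hahn--Banach reduction, and it costs nothing: composing with a norming functional preserves analyticity --- a property that $\lVert f(\cdot)\rVert_X$ itself lacks, being merely subharmonic --- does not increase the supremum norm, and loses nothing on $I$ because $\lVert f\rVert_X$ attains its maximum there; in particular $C_1$ is an absolute constant, independent of the Banach space $X$ and of $p$. The only real work is the bookkeeping in the scalar argument: one must fit the radii $5$, $6$ and the auxiliary $\rho$ together and track the Jensen bound, the Poisson estimate for $h$, and the Remez constant so that all the constant powers of $M$ collapse, via $M^{-c}=(2^{-c})^{\ln(M)/\ln(2)}$, into the single exponent $\ln(M)/\ln(2)$ and all remaining absolute constants merge into one $C_1\ge1$, uniformly in every parameter --- including the degenerate regimes $M=1$ (where $\lvert g\rvert$ is subharmonic with an interior maximum on $I$, hence constant there, so the inequality holds with equality) and $\lvert A\rvert$ close to $\lvert I\rvert$ (where it is trivial). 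Alternatively, one may invoke Kovrijkine's lemma as a black box at exactly this step.
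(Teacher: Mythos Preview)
Your proof is correct and follows essentially the same route as the paper: pick $x_0\in I$ where $\lVert f(\cdot)\rVert_X$ attains its supremum, apply Hahn--Banach to obtain a norming functional, and reduce to Kovrijkine's scalar lemma. The only difference is that you additionally sketch a self-contained proof of the scalar estimate (Jensen, Blaschke factorization, Poisson, Remez), whereas the paper simply cites \cite[Lemma~1]{Kovrijkine-01} as a black box at that point---which you also note as a valid alternative.
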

\begin{proposition} \label{prop:Bernstein}
There exists a constant $C_2 > 0$ such that for all $\lambda \in (0,\infty)^d$, all $p \in [1,\infty]$, all $f \in L^p(\R^d;X)$ with $\spt \mathcal{F} f \subseteq \Pi_\lambda$ and all $\alpha \in \N_0^d$ we have 
\begin{equation*}
	\lVert \partial^\alpha f \rVert_{L^p(\R^d;X)} \leq C_2^{\lvert \alpha \rvert}\lambda^\alpha  \lVert f \rVert_{L^p(\R^d;X)} .
\end{equation*}
\end{proposition}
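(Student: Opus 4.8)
The plan is to realise $\partial^{\alpha}f$ as a convolution of $f$ with a fixed, compactly supported Fourier kernel and then to bound the $L^{1}$-norm of that kernel by $C_{2}^{|\alpha|}\lambda^{\alpha}$. Fix once and for all a function $\psi_{0}\in C^{\infty}_{c}(\R)$ with $0\le\psi_{0}\le1$, $\psi_{0}\equiv1$ on $[-1,1]$, $\spt\psi_{0}\subseteq[-2,2]$, and set $\psi_{\lambda}(\xi)=\prod_{i=1}^{d}\psi_{0}(\xi_{i}/\lambda_{i})$ and $m_{\alpha}(\xi)=(\ii\xi)^{\alpha}\psi_{\lambda}(\xi)$. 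Since $\psi_{\lambda}\equiv1$ on the open set $\bigtimes_{i=1}^{d}(-\lambda_{i},\lambda_{i})\supseteq\overline{\Pi_{\lambda}}\supseteq\spt\F f$, we have $\psi_{\lambda}\cdot\F f=\F f$, so with the multiplication $\C\times X\to X$ the multiplier calculus of Section~\ref{sec:preliminaries} gives $\partial^{\alpha}f=m_{\alpha}(D)f=(\F^{-1}m_{\alpha})*_{\bullet}f$. As $m_{\alpha}\in C^{\infty}_{c}(\R^{d})\subseteq\Schw(\R^{d})$, its inverse Fourier transform lies in $\Schw(\R^{d})\subseteq L^{1}(\R^{d})$, and Young's inequality from Section~\ref{sec:preliminaries} yields, for every $p\in[1,\infty]$,
\[
\lVert\partial^{\alpha}f\rVert_{L^{p}(\R^{d};X)}\le\lVert\F^{-1}m_{\alpha}\rVert_{L^{1}(\R^{d})}\,\lVert f\rVert_{L^{p}(\R^{d};X)} .
\]
It therefore suffices to prove $\lVert\F^{-1}m_{\alpha}\rVert_{L^{1}(\R^{d})}\le C_{2}^{|\alpha|}\lambda^{\alpha}$.

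Next I would use that $m_{\alpha}$ is a tensor product, $m_{\alpha}(\xi)=\prod_{i=1}^{d}(\ii\xi_{i})^{\alpha_{i}}\psi_{0}(\xi_{i}/\lambda_{i})$: the inverse Fourier transform factorises accordingly, and rescaling $\xi_{i}\mapsto\xi_{i}/\lambda_{i}$ in each variable gives $\lVert\F^{-1}m_{\alpha}\rVert_{L^{1}(\R^{d})}=\lambda^{\alpha}\prod_{i=1}^{d}\lVert h^{(\alpha_{i})}\rVert_{L^{1}(\R)}$, where $h:=\F^{-1}\psi_{0}\in\Schw(\R)$ and $\F^{-1}[(\ii t)^{n}\psi_{0}]=h^{(n)}$. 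Everything thus reduces to the one-dimensional bound that there is a constant $c_{0}\ge1$ with $\lVert h^{(n)}\rVert_{L^{1}(\R)}\le c_{0}\,6^{\,n}$ for all $n\in\N_{0}$. To prove this I would use the identity $(1+x^{2})h^{(n)}(x)=\F^{-1}[(1-\partial_{\xi}^{2})((\ii\xi)^{n}\psi_{0})](x)$ together with $\lVert\F^{-1}g\rVert_{L^{\infty}}\le(2\pi)^{-1}\lVert g\rVert_{L^{1}}$ to estimate $\lVert(1+x^{2})h^{(n)}\rVert_{L^{\infty}}\le(2\pi)^{-1}\lVert(1-\partial_{\xi}^{2})((\ii\xi)^{n}\psi_{0})\rVert_{L^{1}}$; the Leibniz rule expands the right-hand side into a bounded number of terms $\xi^{k}\psi_{0}^{(j)}(\xi)$ with $j\le2$ and $k\le n$, with coefficients polynomial in $n$, and since all of them are supported in $[-2,2]$ one has $\lVert\xi^{k}\psi_{0}^{(j)}\rVert_{L^{1}}\le2^{n}\max_{j\le2}\lVert\psi_{0}^{(j)}\rVert_{L^{1}}$, hence $\lVert(1+x^{2})h^{(n)}\rVert_{L^{\infty}}\le c\,(n+2)^{2}2^{n}$ and $\lVert h^{(n)}\rVert_{L^{1}}\le\pi\lVert(1+x^{2})h^{(n)}\rVert_{L^{\infty}}\le c_{0}\,6^{\,n}$.

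Combining the two steps, $\lVert\F^{-1}m_{\alpha}\rVert_{L^{1}(\R^{d})}\le\lambda^{\alpha}c_{0}^{\,d}\,6^{\,|\alpha|}$; for $\alpha\neq0$ this is at most $\lambda^{\alpha}(6c_{0}^{\,d})^{|\alpha|}$, and for $\alpha=0$ the asserted inequality is trivial, so the proposition holds with $C_{2}:=\max\{1,6c_{0}^{\,d}\}$, which depends only on $d$ and the fixed cut-off $\psi_{0}$ (harmless, since $d$ is fixed). The one genuinely delicate point is the uniform-in-$n$ estimate $\lVert h^{(n)}\rVert_{L^{1}}\le c_{0}6^{n}$: for a generic Schwartz cut-off the $L^{1}$-norms of the derivatives typically grow factorially, and what saves us is precisely the compact support of $\psi_{0}$, which — once the $(1+x^{2})$-weight reduces matters to an $L^{1}$-estimate on the Fourier side — confines the monomials $\xi^{k}$ to the fixed set $[-2,2]$ and lets at most two derivatives fall on $\psi_{0}$, so a potential factorial collapses to the harmless factor $2^{n}$. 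The remaining ingredients — the vanishing $\psi_{\lambda}\F f=\F f$, the convolution-type identity $m_{\alpha}(D)f=(\F^{-1}m_{\alpha})*_{\bullet}f$ underlying Lemma~\ref{lem:mult}, and Young's inequality — are all supplied by Section~\ref{sec:preliminaries}.
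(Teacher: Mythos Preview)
Your argument is correct, but the paper takes a much lighter route: it proves only the case $|\alpha|=1$ (by choosing a Schwartz cut-off $\varphi$ with $\varphi\equiv1$ on $[-1/2,1/2]^d$, writing $\partial_j f=(\partial_j\F^{-1}\varphi_\lambda)*f$, and using Young together with the scaling identity $\lVert\partial_j\F^{-1}\varphi_\lambda\rVert_{L^1}=\lambda_j\lVert\partial_j\F^{-1}\varphi\rVert_{L^1}$) and then simply \emph{iterates}: since $\spt\F(\partial_j f)\subseteq\Pi_\lambda$ again, the bound $\lVert\partial_jf\rVert\le C_2\lambda_j\lVert f\rVert$ applied $|\alpha|$ times gives $\lVert\partial^\alpha f\rVert\le C_2^{|\alpha|}\lambda^\alpha\lVert f\rVert$ for free. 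This completely sidesteps the uniform-in-$n$ growth estimate $\lVert h^{(n)}\rVert_{L^1}\le c_0\,6^{\,n}$ that costs you the second paragraph. Your direct approach does work---the compact support of $\psi_0$ indeed keeps the growth exponential rather than factorial, and the tensor factorisation cleanly isolates the $\lambda^\alpha$---and it has the minor virtue of making the constant essentially explicit; but the induction the paper uses is both shorter and conceptually cleaner, since it reduces everything to a single fixed $L^1$-norm $\sup_j\lVert\partial_j\F^{-1}\varphi\rVert_{L^1}$ with no growth analysis needed at all.
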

\begin{proof}[Proof of Proposition~\ref{prop:funktionentheorie}]
Without loss of generality we assume that $\lvert A \rvert > 0$. Since $I$ is closed and $\lVert f(\cdot) \rVert_X$ is continuous on $I$, there exists $x_0 \in I$ such that $\sup_{x \in I} \lVert f(x) \rVert_X = \lVert f(x_0) \rVert_X$. By a consequence of the Hahn-Banach theorem, we can find $x^\prime \in X^\prime$ such that $\lVert x^\prime \rVert_{X^\prime} = 1$ and
\begin{equation*}
	\dual{x^\prime,f(x_0)} = \lVert f(x_0) \rVert_X .
\end{equation*}
The function $\varphi:\discc{5} \to \C$ given by $\varphi = \dual{x^\prime,f(\cdot +x_0)}$ is analytic and we have
\begin{equation*}
	\lvert \varphi(0) \rvert = \lVert f(x_0) \rVert_X \geq 1
\end{equation*}
as well as
\begin{equation*}
	\lvert \varphi(z) \rvert \leq \lVert f(z + x_0) \rVert_X \leq M 
\end{equation*}
for all $z \in \discc{4}$.
Moreover, the sets $I - x_0$ and $A -x_0$ are such that $A -x_0 \subseteq I -x_0$, $A-x_0$ is of positive measure by assumption and $0 \in I - x_0$. Applying Lemma~1 in \cite{Kovrijkine-01} with $\varphi$ as above as well as $I$ and $A$ replaced by $I -x_0$ and $A - x_0$ respectively, we obtain that there exists a constant $C_1 > 0$ such that
\begin{equation*} %\label{eq:com_scal}
	 \sup_{x \in A -x_0} \lvert \varphi(x) \rvert \geq \left( \frac{\lvert A \rvert}{C_1} \right)^{\frac{\ln(M)}{\ln(2)}}  \sup_{x \in I -x_0} \lvert \varphi(x) \rvert .
\end{equation*}
Inequality~\eqref{eq:com_vec} now follows from
\begin{align*}
   \sup_{x \in A} \lVert f(x) \rVert_X 
   \geq 
   \sup_{x \in A} \lvert \dual{x^\prime,f(x)} \rvert
   &=
   \sup_{x \in A - x_0} \lvert \varphi(x) \rvert \\
   &\geq
     \left( \frac{\lvert A \rvert }{C_1} \right)^{\frac{\ln(M)}{\ln(2)}} \lvert \varphi(0) \rvert
     =
      \left( \frac{\lvert A \rvert }{C_1} \right)^{\frac{\ln(M)}{\ln(2)}} 	\sup_{x \in I} \lVert f(x) \rVert_X . \qedhere
\end{align*}
\end{proof}
\begin{proof}[Proof of Proposition~\ref{prop:Bernstein}]
The proof is an adaption of the classical proof, as it can be found for example in \cite{Wolff-03}, to the vector-valued setting. We only prove the assertion in the case $\lvert \alpha \rvert = 1$. The case $\lvert \alpha \rvert = 0$ is trivial, and the case $\lvert \alpha \rvert > 1$ follows by induction. We choose a real-valued function $\varphi\in \Schw(\R^d)$ such that $0 \leq \varphi \leq 1$ as well as $\varphi = 1$ on $[-1/2 , 1/2]^d$ and define $\varphi_\lambda = \varphi(T_\lambda\cdot)$ where
\begin{equation*}
	T_\lambda:\R^d \to \R^d, \quad (x_1,\dots,x_d) \mapsto (x_1/\lambda_1,\dots,x_d/\lambda_d).
\end{equation*}
Clearly, $\varphi_\lambda = 1$ on $\Pi_\lambda$, $\F^{-1} \varphi_\lambda = \lambda_1\lambda_2 \ldots \lambda_d (\F^{-1}\varphi) (T_\lambda^{-1}\cdot)$. Moreover, since the usual convolution identity also holds in the vector-valued setting, we have $f = \F^{-1}(\varphi_\lambda \F{f}) = (\F^{-1} \varphi_\lambda) * f$. From Young's inequality we conclude for all $j \in \{1,2,\ldots , d\}$ that
\begin{equation*}
	\lVert\partial_j f\rVert_{L^p(\R^d;X)} 
	= 
	\lVert(\partial_j \F^{-1} \varphi_\lambda) * f\rVert_{L^p(\R^d;X)} 
	\leq 
	\lVert\partial_j \F^{-1} \varphi_\lambda\rVert_{L^1(\R^d)}\lVert f \rVert_{L^p(\R^d;X)} .
\end{equation*}
Since
\[
 \lVert\partial_j \F^{-1} \varphi_\lambda\rVert_{L^1(\R^d)}
 = \lambda_j\lVert\lambda_1\lambda_2 \ldots \lambda_d(\partial_j\F^{-1} \varphi)(T_\lambda^{-1}\cdot)\rVert_{L^1(\R^d)} = \lambda_j \lVert\partial_j \F^{-1} \varphi\rVert_{L^1(\R^d)},
\]
the assertion (in the case $\lvert \alpha \rvert = 1$) follows with $C_2 = \sup_{j = 1,\dots,d} \lVert\partial_j\F^{-1} \varphi\rVert_{L^1(\R^d)}$.
\end{proof}
\section{Control theory for normally elliptic operators on Banach spaces}
\subsection{Normally elliptic operators and their semigroups}
In \cite{Amann-01}, the notion of normal ellipticity has been introduced for operators with variable, $\L(X)$-valued, non-smooth coefficients and it was shown that their negatives generate analytic semigroups on $L^p(\R^d;X)$. This general framework is technically challenging and involves, for example, Besov spaces of vector-valued functions. In what follows, we consider normally elliptic operator $A$ with \emph{constant} coefficients only. As a consequence, certain proofs of \cite{Amann-01} simplify and we obtain stronger results. In particular, using ideas from \cite{Amann-97, Amann-01, Amann-19} we show that:
\begin{enumerate}[(i)]
	\item $-A_p$, the part of $A$ in $L^p(\R^d;X)$, is a semigroup generator and one can represent the resulting semigroup as a Fourier multiplier. This is suggested by \cite[Remark 7.5]{Amann-97}. Here, we give a full proof of this result.
	\item The derivatives of the symbol of this multiplier decay exponentially. This is the content of Lemma~\ref{lem:symbol_est} which is the crucial result of this section for our application to control theory. In Proposition~3.5.7 of \cite{Amann-19} a similar estimate is given, but with polynomial decay.
\end{enumerate}
Let $X$ be a Banach space and $d,m \in \N$. For given coefficients $a_\alpha \in \L(X)$ where $\alpha$ ranges over all multi-indices with $\lvert \alpha \rvert \leq m$, consider the polynomial $a:\R^d \to \L(X)$,
\begin{equation*}
	a(\xi) = \sum_{\lvert \alpha \rvert \leq m} a_\alpha \xi^\alpha .
\end{equation*}
We suppose that $a$ has degree $m$, meaning there exists a multi-index $\alpha \in \N_0^d$ such that $\abs{\alpha} = m$ and $a_\alpha \neq 0$. The set of all polynomials of this type is denoted by $\mathcal{P}_m(\R^d;\L(X))$. The associated Fourier multiplier $A = a(D)$ is a differential operator acting on $\Schw'(\R^d;X)$, see Section~\ref{sec:preliminaries}. The principal symbol of $A$ is the polynomial $a_m:\R^d \to \L(X)$, 
\begin{equation*}
	a_m(\xi) = \sum_{\lvert \alpha \rvert = m} a_\alpha \xi^\alpha .
\end{equation*}
Let $\kappa \geq 1$, $\vartheta \in [0,\pi)$ and $\omega  \in \R$. We write
\begin{equation*}
	\Sigma_{\vartheta,\omega}= \{ z \in \C: \lvert \arg(z-\omega) \rvert \leq \vartheta \}\cup\{0\} .
\end{equation*}
Given a linear operator $T \in \L(X)$, we denote its resolvent set by $\rho(T)$. We say that a differential operator $A$ is $(\kappa,\vartheta,\omega)$-elliptic if for all $\xi \in \R^d$ with $\lvert \xi \rvert = 1$ it holds that
\begin{equation*}
	\rho(-a_m(\xi)) \supseteq \Sigma_{\vartheta,\omega} 
\end{equation*}
and for all $\lambda \in \Sigma_{\vartheta,\omega}$,
\begin{equation*}
	\lVert(\lambda + a_m(\xi))^{-1}\rVert \leq \frac{\kappa}{1 + \lvert \lambda-\omega \rvert} .
\end{equation*}
We say that $A$ is normally elliptic (with symbol $a$) if it is $(\kappa,\pi/2,0)$-elliptic and call $\kappa$ a ellipticity constant of $A$.
\par
Let $1 \leq p < \infty$. We denote by $A_p$ the part of $A$ in $L^p(\R^d;X)$, that is 
\begin{equation*}
	\dom(A_p) = \{f \in L^p(\R^d;X): Af \in L^p(\R^d;X)\}, \quad A_pf = Af .
\end{equation*}
\begin{remark}
Suppose that $A$ is $(\kappa,\vartheta,\omega)$-elliptic. By homogeneity we obtain for all $\xi \neq 0$ and $\lambda \in \rho(-a_m(\xi))$
\begin{equation*}
(\lambda + a_m(\xi))^{-1} = (\lambda + \lvert \xi \rvert^m a_m(\xi/\lvert \xi \rvert))^{-1} = \lvert \xi \rvert^{-m}(\lvert \xi \rvert^{-m}\lambda + a_m(\xi/\lvert \xi \rvert))^{-1}.
\end{equation*}
Therefore, if $\xi \neq 0$, and $\lambda \in \lvert \xi \rvert^m\Sigma_{\vartheta, \omega} = \Sigma_{\vartheta,\omega\lvert \xi \rvert^m}$, then
\begin{equation*}
\lVert (\lambda + a_m(\xi))^{-1}\rVert \leq \frac{\kappa}{\lvert \xi \rvert^m+ \lvert \lambda - \omega \lvert \xi \rvert^m \rvert } .
\end{equation*}
\end{remark}
\begin{proposition}\label{prop:elliptic}
	If $A$ is normally elliptic with ellipticity constant $\kappa$, there exist $\varphi > \pi/2 $ and $M > 0$ as well as $\mu < 0$ such that $A$ is $(M,\varphi,\mu)$-elliptic. Moreover, we can choose $(M,\varphi,\mu) = (2\kappa + 1,\pi - \arctan(2\kappa),-1 / (2\kappa))$.
\end{proposition}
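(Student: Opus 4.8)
The plan is to obtain both defining properties of $(M,\varphi,\mu)$-ellipticity from a single Neumann-series perturbation of the resolvent bound that normal ellipticity already provides. Fix $\xi\in\R^d$ with $\abs{\xi}=1$ and abbreviate $B=a_m(\xi)\in\L(X)$. Normal ellipticity with ellipticity constant $\kappa$ means precisely that $\{z\in\C:\real z\ge 0\}\subseteq\rho(-B)$ and $\lVert(\lambda+B)^{-1}\rVert\le \kappa/(1+\abs{\lambda})$ for all $\lambda$ with $\real\lambda\ge0$. The perturbation step I would record first is: if $\real\lambda\ge0$ and $\abs{z-\lambda}\le(1+\abs{\lambda})/(2\kappa)$, then writing $z+B=(\lambda+B)\bigl(I+(z-\lambda)(\lambda+B)^{-1}\bigr)$ the correction term has norm $\le\tfrac12$, so the Neumann series converges and gives $z\in\rho(-B)$ together with $\lVert(z+B)^{-1}\rVert\le 2\kappa/(1+\abs{\lambda})$.

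The values $\varphi=\pi-\arctan(2\kappa)$ and $\mu=-1/(2\kappa)$ are chosen so that this step exactly covers $\Sigma_{\varphi,\mu}$. Next I would record the elementary geometric fact that the two boundary rays of $\Sigma_{\varphi,\mu}$ leave the vertex $\mu$ in the directions $(-1,\pm2\kappa)$, whence
\[
\Sigma_{\varphi,\mu}\cap\{\real z<0\}=\bigl\{z\in\C:\real z<0,\ \abs{\real z}\le(1+\abs{\imag z})/(2\kappa)\bigr\},
\]
while $\Sigma_{\varphi,\mu}\supseteq\{\real z\ge0\}$ because $\varphi>\pi/2$. Hence, for $z\in\Sigma_{\varphi,\mu}$ with $\real z<0$ the point $\lambda:=\ii\,\imag z$ lies on the imaginary axis and satisfies $\abs{z-\lambda}=\abs{\real z}\le(1+\abs{\lambda})/(2\kappa)$, so the perturbation step yields $z\in\rho(-B)$ and $\lVert(z+B)^{-1}\rVert\le 2\kappa/(1+\abs{\imag z})$; for $z\in\Sigma_{\varphi,\mu}$ with $\real z\ge0$ the hypothesis applies directly and gives $\lVert(z+B)^{-1}\rVert\le\kappa/(1+\abs{z})$. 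In both cases $z\in\rho(-B)$, so the first requirement $\rho(-a_m(\xi))\supseteq\Sigma_{\varphi,\mu}$ holds.

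It then remains to rewrite the two bounds in the required form $(2\kappa+1)/(1+\abs{z-\mu})$. This is a short computation: for $\real z\ge0$ use $\abs{z-\mu}\le\abs{z}+1/(2\kappa)$ and $\kappa\le 2\kappa+1$; for $\real z<0$ use $2\kappa\abs{z-\mu}\le\abs{2\kappa\real z+1}+2\kappa\abs{\imag z}$ together with the wedge bound $\abs{2\kappa\real z}\le 1+\abs{\imag z}$ (which makes $\abs{2\kappa\real z+1}\le 1+\abs{\imag z}$) to get $2\kappa\abs{z-\mu}\le 1+(2\kappa+1)\abs{\imag z}$, i.e.\ $2\kappa/(1+\abs{\imag z})\le(2\kappa+1)/(1+\abs{z-\mu})$. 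Since $\kappa$ is independent of $\xi$, all these estimates are uniform in $\xi$, which is exactly the statement that $A$ is $(2\kappa+1,\,\pi-\arctan(2\kappa),\,-1/(2\kappa))$-elliptic; finally $\pi-\arctan(2\kappa)>\pi/2$ since $\arctan(2\kappa)<\pi/2$, and $-1/(2\kappa)<0$, so the existence part follows as well.

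I do not expect a conceptual obstacle: because the coefficients are constant the whole argument stays inside $\L(X)$ via the Neumann series, and none of the Besov-space machinery of \cite{Amann-01} is needed. The only genuine work is careful bookkeeping — pinning down $\Sigma_{\varphi,\mu}\cap\{\real z<0\}$ as the stated wedge, including the degenerate points $z=\mu$ and $\imag z=0$, and checking the handful of elementary inequalities that make the constant $M=2\kappa+1$ come out exactly (the $\real z<0$ case being the tight one).
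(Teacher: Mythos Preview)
Your proposal is correct and follows essentially the same approach as the paper: both proofs use the Neumann-series perturbation of the resolvent bound at a base point on the imaginary axis ($\lambda=\ii\,\imag z$ in your notation, $\ii\sigma$ in the paper's) to cover the left half-plane part of $\Sigma_{\varphi,\mu}$, identify that part via the wedge inequality $\abs{\real z}\le(1+\abs{\imag z})/(2\kappa)$, and then perform the same algebra to convert the resulting bound $2\kappa/(1+\abs{\imag z})$ into $(2\kappa+1)/(1+\abs{z-\mu})$. The only cosmetic difference is that the paper first verifies the $\arg$-condition and leaves the $\real\lambda>0$ case as ``easy to see,'' whereas you spell out both half-planes explicitly.
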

\begin{proof}
 	Suppose that $T \in \L(X)$ and $K > 0$, $z \in \rho(-T)$ are such that
	\begin{equation*}
		\lVert (z+T)^{-1} \rVert \leq K .
	\end{equation*}
	Then it follows from the usual Neumann series argument that $D(z,K^{-1}) \subseteq \rho(-T)$ and we have for all $w \in D(z,K^{-1})$ that
	\begin{equation*}
		(w+T)^{-1} = \sum_{n=0}^\infty (z-w)^{n}(z + T)^{-1 -n} ,
	\end{equation*}
	which leads to the estimate
	\begin{equation*}
		\lVert (w + T)^{-1} \rVert \leq K\sum_{n=0}^{\infty} \lvert z-w \rvert^{n}K^{n} = \frac{K}{1 - \lvert z-w \rvert K} .
	\end{equation*}
	In particular, if $w \in \overline{D}(z,(2K)^{-1})$ we get
	\begin{equation*}
		\lVert (w+T)^{-1} \rVert \leq 2K .
	\end{equation*}
	Now, let $A$ be normally elliptic. Fix $\sigma \in \R$ and $\tau \in [0,(1 + \lvert \sigma \rvert)/(2\kappa)]$. Clearly, we have $-\tau + \ii \sigma$ in $\overline D(\ii \sigma, (1 + \lvert \sigma \rvert)/(2\kappa))$. Let $\lvert \xi \rvert = 1$. Applying the above considerations to $T = a_m(\xi)$, we obtain
	\begin{equation*}
		\lVert (-\tau + \ii\sigma + a_m(\xi))^{-1} \rVert \leq \frac{2\kappa}{1 + \lvert \sigma \rvert} .
	\end{equation*}
	Furthermore, since
	\begin{equation*}
		1 + \lvert -\tau + \ii \sigma + \frac{1}{2\kappa} \rvert \leq 1 + \lvert -\tau + \frac{1}{2\kappa} \rvert + \lvert \sigma \rvert 
		\leq 1 + \frac{1}{2\kappa}(1 + \lvert \sigma \rvert) + \lvert \sigma \rvert 
		\leq \frac{2\kappa + 1}{2\kappa}(1 + \lvert \sigma \rvert)
	\end{equation*}
	we obtain
	\begin{equation*}
		\frac{2\kappa}{1 + \lvert \sigma \rvert} \leq \frac{2\kappa + 1}{1 + \lvert -\tau + \ii \sigma + \frac{1}{2\kappa} \rvert } .
	\end{equation*}
	Moreover, we have
	\begin{equation*}
		\left\lvert \arg\left(-\tau + \ii\sigma + \frac{1}{2\kappa}\right)\right\rvert \leq \left\lvert \arg\left( -\frac{\lvert \sigma \rvert}{2\kappa} + \ii\sigma \right)\right\rvert \leq \pi - \arctan(2\kappa),
	\end{equation*}
	where the argument of a complex number has to be understood as an element of $[-\pi,\pi)$.
	Since
	\begin{equation*}
		\Sigma_{\pi - \arctan(2\kappa),-\frac{1}{2\kappa}} \cap \{\lambda \in \C: \real(\lambda) \leq 0 \} = \left\{ -\tau + \ii \sigma:~ \sigma \in \R,~ \tau \in \left[0,\frac{1 + \lvert \sigma \rvert}{2\kappa}\right]  \right\} ,
	\end{equation*}
	we conclude that for all $\lambda \in \Sigma_{\pi - \arctan(2\kappa),-1/(2\kappa)} \cap \{\lambda \in \C: \real(\lambda) \leq 0 \}$ we have
	\begin{equation*}
			\lVert (\lambda + a_m(\xi))^{-1} \rVert \leq \frac{2\kappa +1}{1 + \lvert \lambda+ 1 / (2\kappa)\rvert} .
	\end{equation*}
It is easy to see that this estimate also holds if $\real(\lambda) > 0$. The latter inequality implies that $A$ is $(2\kappa + 1,\pi - \arctan(2\kappa),-1 / (2\kappa))$-elliptic.	
\end{proof}
For all $n \geq 0$, all $p \in \mathcal{P}_n(\R^d;\L(X))$, and all multi-indices $\alpha \in \N_0^d$ we define 
	\begin{equation*}
		N_\alpha(p) = \max_{\beta \leq \alpha} \sup_{\xi \in \R^d} \frac{\lVert \partial^\beta p(\xi) \rVert}{(1 + \lvert \xi \rvert)^{n - \lvert \beta \rvert}} ,
	\end{equation*}
	where for multi-indices $\alpha,\beta \in \N_0^d$ we write $\beta \leq \alpha$ if $\beta_i \leq \alpha_i$ for all $i \in \{1,2,\ldots , d\}$.
\begin{proposition}\label{prop:pertubation}
Suppose that $A$ is normally elliptic with ellipticity constant $\kappa$. Then there exist $\varphi, \gamma, \omega, M > 0$ such that for all $\xi \in \R^d$, and all $\lambda \in \Sigma_{\varphi,-\gamma\lvert \xi \rvert^m + \omega}$ we have
\begin{equation*}
	\lVert (\lambda + a(\xi))^{-1} \rVert \leq \frac{M}{\lvert \xi \rvert^m + \lvert\lambda + \gamma\lvert \xi\rvert^m \rvert} .
\end{equation*}
The parameters $\varphi, \gamma$ depend only on $a_m$ while $\omega$ depends on $a_m$ and $N_0(a-a_m)$. Moreover, we can choose
\begin{equation*}
	M = 4\kappa +2 .
\end{equation*}
\end{proposition}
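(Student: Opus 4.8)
The plan is to treat $a_m(D)$ as the principal part and $r := a - a_m$ as a lower-order perturbation, inverting $\lambda + a(\xi)$ pointwise in $\xi$ by a Neumann series built on the resolvent bound for $a_m$ that Proposition~\ref{prop:elliptic} and homogeneity already provide. Concretely, combining Proposition~\ref{prop:elliptic} with the scaling remark preceding it, $A$ being $(2\kappa+1,\varphi_0,-1/(2\kappa))$-elliptic with $\varphi_0 := \pi-\arctan(2\kappa)$ gives, for every $\xi \neq 0$ and every $\lambda \in \Sigma_{\varphi_0,-\abs{\xi}^m/(2\kappa)}$, that $\lambda \in \rho(-a_m(\xi))$ and
\begin{equation*}
  \lVert (\lambda + a_m(\xi))^{-1} \rVert \leq \frac{2\kappa+1}{\abs{\xi}^m + \abs{\lambda + \abs{\xi}^m/(2\kappa)}} ,
\end{equation*}
the case $\xi = 0$ being immediate since $a_m(0)=0$. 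This already determines the shape of the target estimate.

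Accordingly I set $\gamma := 1/(2\kappa)$ and pick any $\varphi \in (\pi/2,\varphi_0]$ — both depend only on $a_m$, through its ellipticity constant $\kappa$ — and postpone the choice of $\omega > 0$. For any such $\omega$ the sector $\Sigma_{\varphi,-\gamma\abs{\xi}^m+\omega}$ is contained in $\Sigma_{\varphi_0,-\abs{\xi}^m/(2\kappa)}$ (same axis, smaller opening angle, vertex moved to the right), so the bound above for $a_m$ holds throughout it. Moreover, since $r = a - a_m$ is a polynomial of degree at most $m-1$, one has $\lVert r(\xi) \rVert \leq c_{d,m}\,N_0(r)\,(1+\abs{\xi})^{m-1}$ for a purely dimensional constant $c_{d,m}$ (the homogeneous parts of $r$ are recovered from finitely many of its values by interpolation, and thereby bounded by $N_0(r)$). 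Now factor
\begin{equation*}
  \lambda + a(\xi) = (\lambda + a_m(\xi))\bigl(\mathrm{Id} + (\lambda+a_m(\xi))^{-1} r(\xi)\bigr) ,
\end{equation*}
so that it is enough to choose $\omega$ so large that $\lVert (\lambda+a_m(\xi))^{-1} r(\xi)\rVert \leq 1/2$ for all $\xi\in\R^d$ and all $\lambda \in \Sigma_{\varphi,-\gamma\abs{\xi}^m+\omega}\setminus\{0\}$; then the second factor is invertible with inverse of norm $\leq 2$, hence $\lambda+a(\xi)$ is invertible and
\begin{equation*}
  \lVert(\lambda+a(\xi))^{-1}\rVert \leq 2\,\lVert(\lambda+a_m(\xi))^{-1}\rVert \leq \frac{4\kappa+2}{\abs{\xi}^m + \abs{\lambda+\gamma\abs{\xi}^m}} ,
\end{equation*}
which is the claim with $M = 4\kappa+2$.

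It remains to secure the smallness condition, and here I would split on $\abs{\xi}$. For $\abs{\xi} \geq R$, where $R := \max\{1,\,2^m(2\kappa+1)c_{d,m}N_0(r)\}$, the inequality $(1+\abs{\xi})^{m-1} \leq 2^{m-1}\abs{\xi}^{m-1}$ and the trivial lower bound $\abs{\xi}^m$ for the denominator above give $\lVert(\lambda+a_m(\xi))^{-1}\rVert\,\lVert r(\xi)\rVert \leq (2\kappa+1)2^{m-1}c_{d,m}N_0(r)/\abs{\xi} \leq 1/2$, uniformly in $\lambda$. For $\abs{\xi} < R$ one has $\lVert r(\xi)\rVert \leq c_{d,m}N_0(r)(1+R)^{m-1}$, while for $\lambda \in \Sigma_{\varphi,-\gamma\abs{\xi}^m+\omega}\setminus\{0\}$ the point $\lambda + \gamma\abs{\xi}^m$ lies in the cone with vertex $\omega$ and half-angle $\varphi$, so $\abs{\lambda+\gamma\abs{\xi}^m} \geq \omega\sin\varphi$; hence the product is at most $(2\kappa+1)c_{d,m}N_0(r)(1+R)^{m-1}/(\omega\sin\varphi)$. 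Choosing $\omega := 2(2\kappa+1)c_{d,m}N_0(r)(1+R)^{m-1}/\sin\varphi$ (or any larger value, and any positive value when $N_0(r)=0$) makes this $\leq 1/2$; this $\omega$ depends only on $a_m$ and on $N_0(a-a_m)$, as required.

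The main obstacle is the low-frequency regime: near $\xi = 0$ the lower-order term $r(\xi)$ is not small relative to $\abs{\xi}^m$, and the only available lever is to push the reference point $\omega$ far to the right — which is precisely what forces the case distinction on $\abs{\xi}$ and the small computation that a cone of half-angle $\varphi<\pi$ with vertex $\omega>0$ stays at distance $\omega\sin\varphi$ from the origin. One should also track the sector inclusion carefully and observe that at $\lambda=0$, where $a(0)$ need not be invertible, the asserted inequality is to be read with the convention $1/0 = +\infty$. The remaining ingredients — the homogeneity scaling, the Neumann estimate, and the polynomial bound on $r$ — are routine.
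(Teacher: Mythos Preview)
Your proof is correct and follows essentially the same approach as the paper's: invoke Proposition~\ref{prop:elliptic} together with the homogeneity remark to control $(\lambda+a_m(\xi))^{-1}$, then absorb the lower-order part $r=a-a_m$ via a Neumann series, yielding the factor $2$ and hence $M=4\kappa+2$. Your write-up is in fact more explicit than the paper's, which simply asserts ``for a sufficiently large $\omega$'' where you supply the case split on $\lvert\xi\rvert$ and the distance estimate $\lvert\lambda+\gamma\lvert\xi\rvert^m\rvert\geq\omega\sin\varphi$.
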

\begin{proof}
We employ the following well-known perturbation result based on the Neumann series: If $T,S \in \L(X)$ such that
\begin{equation*}
	\lVert ST^{-1} \rVert \leq \frac{1}{2}
\end{equation*}
then $T+S$ is invertible and
\begin{equation*}
\lVert (T + S)^{-1} \rVert \leq 2\lVert T \rVert .
\end{equation*}
We infer from Proposition~\ref{prop:elliptic} that there exist constants $C,\varphi, \gamma > 0$ depending only on $a_m$ such that for all $\lambda \in \Sigma_{\varphi,-\gamma\lvert \xi \rvert^m}$
\begin{equation*}
\lVert (\lambda + a_m(\xi))^{-1} \rVert \leq \frac{C}{\lvert \xi \rvert^m + \lvert \lambda +\gamma\lvert \xi \rvert^m\rvert} .
\end{equation*}
We note that $a - a_m$ has degree $m-1$. For a sufficiently large $\omega > 0$, we obtain for all $\lambda \in \Sigma_{\varphi,-\gamma\lvert \xi \rvert^m + \omega}$
\begin{equation*}
\lVert (a(\xi) - a_m(\xi))(\lambda + a_m(\xi))^{-1} \rVert \leq \frac{CN_0(a-a_m)(1 + \lvert \xi \rvert)^{m-1}}{\lvert \xi \rvert^m + \lvert \lambda +\gamma\lvert \xi \rvert^m \rvert} \leq \frac{1}{2} .
\end{equation*}
From the perturbation result and Proposition~\ref{prop:elliptic}, we obtain the claimed inequality.
\end{proof}
Let $A$ be a normally elliptic operator. The above Proposition implies that for all $\xi \in \R^d$ and all $\lambda \in \Sigma_{\varphi,-\gamma\lvert \xi \rvert^m + \omega}$ we have
\begin{equation} \label{eq:sectorial}
\lVert (\lambda + a(\xi))^{-1} \rVert \leq \frac{1}{\sin(\varphi)}\frac{M}{\lvert \lambda + \gamma \lvert \xi \rvert^m - \omega \rvert} .
\end{equation}
This can be seen as follows: Using the notation $\lambda + \gamma \lvert \xi \rvert^m - \omega= r \euler^{\ii \psi}$, where $r > 0$ and $\psi \in [- \varphi , \varphi]$ we find
\begin{align*}
\frac{\lvert \lambda + \gamma \lvert \xi \rvert^m - \omega \rvert}{\lvert \xi \rvert^m + \lvert \lambda + \gamma\lvert \xi \rvert^m}\rvert &\leq \frac{\abs{\lambda + \gamma \abs{\xi}^m - \omega}}{\abs{\lambda + \gamma\abs{\xi}^m - \omega + \abs{\xi}^m + \omega}} \\
&\leq \sup_{r > 0} \sup_{\psi \in [-\varphi , \varphi]} \frac{\abs{r \euler^{\ii \psi}}}{\abs{r \euler^{\ii \psi} + \abs{\xi}^m + \omega}} \\
&\leq \frac{r}{\imag(\abs{r \euler^{\ii \psi} + \abs{\xi}^m + \omega})} \leq \frac{1}{\sin(\varphi)} .
\end{align*}
This implies Ineq.~\eqref{eq:sectorial}. Thus, $-a(\xi)$ is a sectorial operator in the sense of \cite[Definition~2.0.1]{Lunardi-95}. Hence, $-a(\xi)$ generates for all $\xi \in \R^d$ an analytic semigroup on $X$ which we denote by $(S_t(\xi))_{t \geq 0}$. Consequently, there exists a $C > 0$ such that for all $\xi \in \R^d$ and all $t \geq 0$ we have
\begin{equation}\label{eq:exp}
\lVert S_t(\xi) \rVert \leq C\euler^{\omega t - \gamma \abs{\xi}^m t} .
\end{equation}
Note that the constant $C$ is independent of $\xi$ since $M$ and $\varphi$ in Ineq.~\eqref{eq:sectorial} are independent of $\xi$.
\begin{lemma}\label{lem:symbol_est}
	Let $A$ be a normally elliptic operator with symbol $a$ and denote for each $\xi \in \R^d$ the semigroup generated by $-a(\xi)$ by $(S_t(\xi))_{t \geq 0}$. Then there exist $\mu, \omega > 0$ depending only on $a_m$ such that for each multi-index $\alpha$ there exists a constant $K_\alpha > 0$ such that for all $\xi \in \R^d$ and $t \geq 0$ it holds that
	\begin{equation}\label{eq: s_a}
		\lVert \partial^\alpha S_t(\xi) \rVert \leq K_\alpha \euler^{\omega t - \mu\abs{\xi}^m t} .
	\end{equation}
	The constant $K_\alpha$ can be chosen to depend only on the principal symbol $a_m$ and $N_\alpha(a)$.
\end{lemma}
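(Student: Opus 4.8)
The plan is to express the $\xi$-derivatives of $S_t(\xi)$ through an iterated Duhamel formula and then to insert the exponential bound~\eqref{eq:exp} on the \emph{undifferentiated} semigroups. For $\abs\alpha=0$ the assertion is exactly~\eqref{eq:exp}, with $\mu$ any number in $(0,\gamma)$, say $\mu=\gamma/2$; so fix $\abs\alpha\ge1$. Since for every $\xi\in\R^d$ the operator $a(\xi)$ is a finite sum of bounded operators, it lies in $\L(X)$, so $S_t(\xi)=\euler^{-ta(\xi)}$ is the exponential of a bounded operator and $\xi\mapsto S_t(\xi)$ is smooth from $\R^d$ to $\L(X)$. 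The elementary identity $\partial_j\euler^{B(\xi)}=\int_0^1\euler^{sB(\xi)}(\partial_jB(\xi))\euler^{(1-s)B(\xi)}\diff s$, applied with $B(\xi)=-ta(\xi)$, gives $\partial_jS_t(\xi)=-\int_0^tS_u(\xi)(\partial_ja(\xi))S_{t-u}(\xi)\diff u$. Iterating (each new $\partial_j$ either hits one of the semigroup factors, splitting its time variable into two, or hits one of the coefficient factors $\partial^{\gamma}a(\xi)$) yields a representation of $\partial^\alpha S_t(\xi)$ as a finite linear combination, with coefficients $c\ge0$ depending only on $\alpha$ and $d$, of terms of the form
\[
 (-1)^k\int_{\Lambda_k(t)}S_{s_0}(\xi)\,\big(\partial^{\gamma^{(1)}}a(\xi)\big)\,S_{s_1}(\xi)\cdots\big(\partial^{\gamma^{(k)}}a(\xi)\big)\,S_{s_k}(\xi)\,\diff s ,
\]
where $1\le k\le\abs\alpha$, the $\gamma^{(i)}$ are nonzero multi-indices with $\gamma^{(1)}+\dots+\gamma^{(k)}=\alpha$, and $\Lambda_k(t)=\{(s_0,\dots,s_k)\in[0,\infty)^{k+1}:s_0+\dots+s_k=t\}$ carries its natural measure, of total mass $t^k/k!$.

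Next I would estimate one such term. As $a$ has degree $m$, terms containing some $\gamma^{(i)}$ with $\abs{\gamma^{(i)}}>m$ vanish, so we may assume $\abs{\gamma^{(i)}}\le m$ for all $i$, whence $km-\abs\alpha\ge0$; and since $\gamma^{(i)}\le\alpha$, the definition of $N_\alpha(\cdot)$ yields $\lVert\partial^{\gamma^{(i)}}a(\xi)\rVert\le N_\alpha(a)(1+\abs\xi)^{m-\abs{\gamma^{(i)}}}$ and hence $\prod_{i=1}^k\lVert\partial^{\gamma^{(i)}}a(\xi)\rVert\le N_\alpha(a)^k(1+\abs\xi)^{km-\abs\alpha}$. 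By~\eqref{eq:exp} and $\sum_{i=0}^ks_i=t$ we get $\prod_{i=0}^k\lVert S_{s_i}(\xi)\rVert\le C^{k+1}\euler^{\omega t-\gamma\abs\xi^mt}$ with $C,\omega,\gamma$ as in~\eqref{eq:exp}, and integration over $\Lambda_k(t)$ contributes the factor $t^k/k!$. Thus the displayed term has norm at most $c\,C^{k+1}N_\alpha(a)^k(1+\abs\xi)^{km-\abs\alpha}\,\tfrac{t^k}{k!}\,\euler^{\omega t-\gamma\abs\xi^mt}$.

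It remains to absorb the factor $(1+\abs\xi)^{km-\abs\alpha}t^k$. One checks that for all $\xi\in\R^d$, $t\ge0$,
\[
 (1+\abs\xi)^{km-\abs\alpha}\,\frac{t^k}{k!}\,\euler^{-\gamma\abs\xi^mt}\ \le\ \widetilde C_{k,\alpha}\,\euler^{t}\,\euler^{-(\gamma/2)\abs\xi^mt}
\]
for a suitable $\widetilde C_{k,\alpha}$ depending only on $k$, $\alpha$ and $\gamma$: it suffices to bound the left side by $\widetilde C_{k,\alpha}\euler^{t}$ after dropping the factor $\euler^{-(\gamma/2)\abs\xi^mt}\le1$ on the right, and for $\abs\xi\le1$ one uses $(1+\abs\xi)^{km-\abs\alpha}\le2^{km}$ and $t^k/k!\le\euler^{t}$, while for $\abs\xi\ge1$ one uses $(1+\abs\xi)^{km-\abs\alpha}\le2^{km}\abs\xi^{\max\{km-\abs\alpha,0\}}$ together with $\abs\xi^{-\abs\alpha}\le1$ and $\sup_{s\ge0}s^k\euler^{-(\gamma/2)s}<\infty$ (when $km\ge\abs\alpha$, writing $\abs\xi^{km}t^k=(\abs\xi^mt)^k$) or $\euler^{-(\gamma/2)\abs\xi^mt}\le\euler^{-(\gamma/2)t}$ and $\sup_{t\ge0}t^k\euler^{-(\gamma/2)t}<\infty$ (when $km<\abs\alpha$). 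Summing over the finitely many terms gives
\[
 \lVert\partial^\alpha S_t(\xi)\rVert\ \le\ \Big(\sum c\,C^{k+1}N_\alpha(a)^k\widetilde C_{k,\alpha}\Big)\,\euler^{(\omega+1)t-(\gamma/2)\abs\xi^mt} ,
\]
which is the assertion with $\omega$ replaced by $\omega+1$, $\mu=\gamma/2$, and $K_\alpha$ the displayed finite sum. By Proposition~\ref{prop:pertubation} and the discussion preceding~\eqref{eq:exp}, $\gamma$ (hence $\mu$) and $C$ depend only on $a_m$, while the combinatorial constants, $\widetilde C_{k,\alpha}$ and the number of terms depend only on $\alpha$ and $d$; hence $K_\alpha$ depends only on $a_m$ and $N_\alpha(a)$, as claimed.

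The calculus above is routine; the one point to be mindful of is that $\mu$ and $\omega$ in the conclusion must not be allowed to depend on $\alpha$. This is exactly why it is convenient to expand $\partial^\alpha S_t(\xi)$ in a single step via the iterated Duhamel formula, so that~\eqref{eq:exp} is invoked only on the undifferentiated factors $S_{s_i}(\xi)$, rather than to argue by induction on $\abs\alpha$, where each differentiation step would force a loss in the exponential rates.
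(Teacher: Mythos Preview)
Your argument is correct and constitutes a genuinely different proof from the paper's. The paper works with the shifted symbol $b(\xi)=-a(\xi)+\gamma\lvert\xi\rvert^m-\omega$, represents the associated semigroup $T_t(\xi)$ via the Dunford--Riesz contour integral, differentiates the resolvent $(\lambda-b(\xi))^{-1}$ under the integral (obtaining the same product structure you get, but at the resolvent level), and only at the end passes from $T_t$ back to $S_t$ via $S_t(\xi)=\euler^{\omega t-\gamma\lvert\xi\rvert^m t}T_t(\xi)$ and Leibniz. Your Duhamel iteration is more direct: it bypasses the auxiliary operator $b$ and the contour calculus altogether, and feeds in the already-available bound~\eqref{eq:exp} on the undifferentiated factors. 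What the paper's route buys is the intermediate polynomial-in-$t$ estimate~\eqref{eq:s_b1} (equivalently~\eqref{eq:valpha}) for $\partial^\alpha T_t(\xi)$, which is reused verbatim in the proof of Lemma~\ref{lem:schwartz}; your approach gives the final exponential bound in one stroke but does not produce that intermediate information.

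Two small expository remarks. First, your sentence ``it suffices to bound the left side by $\widetilde C_{k,\alpha}\euler^{t}$ after dropping the factor $\euler^{-(\gamma/2)\lvert\xi\rvert^{m}t}\le1$ on the right'' reads the wrong way round: dropping a factor $\le1$ on the right weakens, not strengthens, the inequality. What you actually do (and what makes the argument work) is move $\euler^{-(\gamma/2)\lvert\xi\rvert^{m}t}$ to the left, reducing the claim to $(1+\lvert\xi\rvert)^{km-\lvert\alpha\rvert}\tfrac{t^{k}}{k!}\euler^{-(\gamma/2)\lvert\xi\rvert^{m}t}\le\widetilde C_{k,\alpha}\euler^{t}$, which your case split then verifies. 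Second, since you already observed $km\ge\lvert\alpha\rvert$ (from $\lvert\gamma^{(i)}\rvert\le m$), the clause ``when $km<\lvert\alpha\rvert$'' is vacuous and can be dropped.
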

\begin{proof}
	Let $\xi \in \R^d$. Since $A$ is normally elliptic, Proposition~\ref{prop:elliptic} implies that there exist $\tilde M,\lambda,\gamma > 0$ and $\varphi \in (\pi / 2,\pi)$ such that
	\begin{equation}\label{eq:ell_constants}
		 \lVert (\lambda - \gamma\abs{\xi}^m + \omega + a(\xi))^{-1}\rVert \leq \frac{\tilde M}{\abs{\xi}^m + \abs{\lambda + \omega }}, \quad (\lambda \in \Sigma_{\varphi,0}) .
	\end{equation}
	We set $b(\xi) = -a(\xi) + \gamma\abs{\xi}^m - \omega$.
	Due to $\abs{\lambda + \omega} \geq \sin(\varphi)\abs{\lambda}$ for $\lambda \in \Sigma_{\varphi,0}$ and setting $M = \tilde M (\sin(\varphi))^{-1}$ it follows that
	\begin{equation*}
		\lVert (\lambda - b(\xi))^{-1} \rVert = \lVert (\lambda - \gamma\abs{\xi}^m + \omega + a(\xi))^{-1} \rVert \leq \frac{M}{\abs{\xi}^m + \abs{\lambda}}, \quad (\lambda \in \Sigma_{\varphi,0}) .
	\end{equation*}
	Write $(T_t(\xi))_{t \geq 0}$ for the semigroup generated by $b(\xi)$. It is clear that
	\begin{equation}\label{eq:ba}
		T_t(\xi) = \euler^{-\omega t + \gamma t\abs{\xi}^m }S_t(\xi) .
	\end{equation}
	Let $\alpha$ be a multi-index. We show that there exists a constant $\tilde M_\alpha > 0$ such that
	\begin{equation}\label{eq:s_b1}
		\lVert \partial^\alpha T_t(\xi) \rVert \leq \tilde M_\alpha(t(1 +\abs{\xi})^{m-1} + t^\abs{\alpha}(1+\abs{\xi})^{(m-1)\abs{\alpha}}) \quad (\xi \in \R^d, t \geq 0)
	\end{equation}
	holds.
	For $\abs{\alpha} = 0$ this is straightforward by Ineq.~\eqref{eq:exp}. Therefore, we assume that $\abs{\alpha} \geq 1$ in the following.
	\par
		Let $r > 0$. Consider the contour
		\begin{equation*}
		\Gamma= \euler^{i\varphi}[r,\infty) \cup (r\T \cap \Sigma_{\varphi,0}) \cup \euler^{-i\varphi}[r,\infty) .
		\end{equation*}
		with positive orientation, where $\T$ denotes the unit circle in $\C$.
		Let $\alpha$ be a multi-index. For every $t \geq 0$, we consider the functions
		\begin{equation*}
		T^{(\alpha)}_t:\R^d \to \L(X), \quad T^{(\alpha)}_t(\xi) = \frac{1}{2\pi\ii} \int\limits_\Gamma \euler^{t\lambda}\partial^\alpha(\lambda - b(\xi))^{-1} \diff \lambda .
		\end{equation*}
		For the sake of simplicity we will write $b$ instead of $b(\xi)$. Since
		\begin{equation*}
			\partial_j(\lambda - b)^{-1} = (\lambda - b)^{-1}(\partial_j b)(\lambda - b)^{-1} ,
		\end{equation*}
		it follows by induction on the length of $\alpha$ that $\partial^\alpha(\lambda- b)^{-1}$ is a finite sum of terms having the form
		\begin{equation*}
		Q(\beta_1,\beta_2,\dots,\beta_\nu,b,\lambda) = 	(\lambda -b)^{-1}(\partial^{\beta_1}b)(\lambda - b)^{-1}(\partial^{\beta_2}b)\dots(\lambda - b)^{-1}(\partial^{\beta_\nu}b)(\lambda - b)^{-1}
		\end{equation*}
		where $1 \leq  \nu \leq \abs{\alpha}$ and $\beta_1,\beta_2,\dots,\beta_\nu$ are nonzero multi-indices of length $\leq m$ such that $\beta_1 + \beta_2+ \dots +\beta_\nu = \alpha$, see Eq.~(7.4) in \cite{Amann-97}.
 We have the estimate
	\begin{align}
    \lVert Q(\beta_1,\beta_2,\dots,\beta_\nu,b,\lambda)\rVert 
	&\leq 
	\lVert(\lambda - b)^{-1}\rVert^{\nu +1}\prod_{\mu = 1}^{\nu} \lVert\partial^{\beta_\mu}b\rVert \nonumber \\
		&\leq \frac{N_\alpha(b)   M^{\nu+1}}{(\abs{\xi}^m + \abs{\lambda})^{\nu + 1}}\prod_{\mu = 1}^{\nu} (1 + \abs{\xi})^{m - \abs{\beta_\mu}} \nonumber \\
		&\leq \frac{N_\alpha(b) M^{\nu+1}(1+\abs{\xi})^{\nu m-\abs{\alpha}}}{(\abs{\xi}^m + \abs{\lambda})^{\nu + 1}} . \label{eq:QQ}
	\end{align}
	Now, for $\lambda = \rho \euler^{\pm \ii\psi}$ with $\rho > 0$ and $\psi \in [-\varphi , \varphi]$ it follows that
	\begin{equation*}
		\lVert \euler^{t\lambda }Q(\beta_1,\beta_2,\dots,\beta_\nu,b,\lambda) \rVert \leq  \frac{N_\alpha(b)  M^{\nu+1}(1+\abs{\xi})^{\nu m-\abs{\alpha}}}{(\abs{\xi}^m + \rho)^{\nu+1}}\euler^{t\rho\cos(\psi)} .
	\end{equation*}
	Thus, it follows that
	\begin{align*}
			\biggl\lVert \int\limits_\Gamma \euler^{t\lambda}& Q(\alpha_1,\alpha_2,\dots,\alpha_\nu,b,\lambda) \diff \lambda \biggr\rVert \\
			&\leq N_\alpha(b) M^{\nu+1}(1+\abs{\xi})^{\nu m-\abs{\alpha}} \left(2\int\limits_r^\infty \frac{\euler^{t\rho\cos(\varphi)}}{(\abs{\xi}^m +\rho)^{\nu +1}} \diff\rho +  \frac{2\varphi r \euler^{tr}}{(\abs{\xi}^m + r)^{\nu+1}} \right) \\
			&\leq N_\alpha(b) M^{\nu+1}(1+\abs{\xi})^{\nu m- \abs{\alpha}}\left(\frac{2\euler^{tr\cos(\varphi)}}{ t\abs{\cos(\varphi)}(\abs{\xi}^m+r)^{\nu + 1}} +  \frac{2\varphi r \euler^{tr}}{(\abs{\xi}^m + r)^{\nu+1}}\right)  .
	\end{align*}
		Choosing $r = 1/t$ and noting that
		\begin{equation*}
		\frac{1}{t(\abs{\xi}^m + \frac{1}{t})^{\nu + 1}} = \frac{t^\nu}{t^{\nu + 1}(\abs{\xi}^m + \frac{1}{t})^{\nu + 1}} = \frac{t^\nu}{(t\abs{\xi}^m + 1)^{\nu + 1}}
		\end{equation*} we obtain that there exists a constant $C_\varphi > 0$ depending only on $\varphi$ such that
		\begin{align*}
		\biggl\lVert \int\limits_\Gamma \euler^{t\lambda}Q(\alpha_1,\alpha_2,\dots,\alpha_\nu,b,\lambda) \diff \lambda \biggr\rVert &\leq C_\varphi N_{\alpha}(b)  M^{\nu+1}  \frac{t^\nu(1+\abs{\xi})^{\nu m-\abs{\alpha}}}{(t\abs{\xi}^m + 1)^{\nu +1}} \\ &\leq C_\varphi N_\alpha(b) M^{\nu+1} t^\nu (1+\abs{\xi})^{\nu m-\abs{\alpha}} . 
		\end{align*}
		Denote by $C$ a generic constant depending only on $d$ and $m$ whose value may change from line to line. Since $T^{(\alpha)}_t(\xi)$ is a finite sum of terms such as the one above with $1 \leq \nu \leq \abs{\alpha}$ it follows that there exists a constant $C$ such that if we set
		\begin{equation*}
			K_0= C_\varphi M^{\abs{\alpha}+1} N_{\alpha}(b),
		\end{equation*}
		we obtain for all $\xi \in \R^d$ and $t \geq 0$
		\begin{equation}\label{eq:valpha}
				\bigl\lVert T^{(\alpha)}_t(\xi) \bigr\rVert \leq CK_0(t(1 +\abs{\xi})^{m-1} + t^\abs{\alpha}(1+\abs{\xi})^{(m-1)\abs{\alpha}}) .
		\end{equation}
			In particular, in view of the Dunford-Riesz representation
					\begin{equation*}
						T_t(\xi) = \frac{1}{2\pi\ii} \int\limits_\Gamma \euler^{t\lambda}(\lambda - b(\xi))^{-1} \diff \lambda .
					\end{equation*}
					the above calculations imply that we may differentiate under the integral sign and obtain $T^{(\alpha)}_t = \partial^\alpha T_t$. Thus \eqref{eq:s_b1} follows. To deduce \eqref{eq: s_a} from \eqref{eq:s_b1}, we merely need to observe that by \eqref{eq:ba} and the Leibniz rule, we obtain that there exists a constant $C_\gamma > 0$ such that if we set $K_1= C_\gamma K_0$, we obtain
			\begin{align*}
				\bigl\lVert\partial^\alpha & S_t(\xi) \bigr\rVert \\ 
				&\leq 
				C e^{\omega t}\sum_{\beta \leq \alpha} \bigl\lvert \partial^{\beta}(\euler^{-\gamma t\abs{\xi}^m})\bigr\rvert\lVert \partial^{\alpha - \beta}T_t(\xi) \rVert \\
				&\leq CK_0\euler^{\omega t - \gamma t \abs{\xi}^m} \sum_{\beta \leq \alpha}(1 + (\gamma t\abs{\xi}^{(m-1)\abs{\beta}})(t(1 +\abs{\xi})^{m-1} + t^\abs{\alpha}(1+\abs{\xi})^{(m-1)\abs{\alpha}}) \\
				&\leq CK_1\euler^{\omega t - \gamma t \abs{\xi}^m / 2} .
			\end{align*}
			By the triangle inequality we have that $N_\alpha(b) \leq C_{\gamma,\omega}N_\alpha(a)$. Thus, we obtain the statement of the lemma with $\mu = \gamma/2$ and
			\begin{align*}
			K_\alpha &= C_{\varphi,\gamma,\omega,d,m} M^{\abs{\alpha}+1}N_\alpha(a) .  \qedhere
			\end{align*}
\end{proof}
	\begin{remark}\label{rem:pertubation}
	By inspecting the proof of Lemma~\ref{lem:symbol_est}, in particular the estimate \eqref{eq:QQ}, we note that the constant $M_\alpha$ appearing in \eqref{eq: s_a} may be chosen such that it depends only on the parameters appearing in \eqref{eq:ell_constants} and
	\begin{equation*}
		\max_{\abs{\alpha} \leq m} \lVert a_\alpha \rVert ,
	\end{equation*}
	where $a_\alpha \in \L(X)$ are the coefficients of $a$. From this, we see that the estimate \eqref{eq: s_a} is stable under certain perturbations. Let for example $(A_\tau)_{\tau \in [0,1]}$ be a family of differential operators such that their symbols $(a_\tau)_{\tau \in [0,1]}$ take the form
	\begin{equation*}
		a_\tau(\xi) = a_m(\xi) + \sum_{\abs{\alpha} < m} a_{\alpha,\tau}\xi^{\alpha}, \quad (\xi \in \R^d, \tau \in [0,1]),
	\end{equation*}
	where $a_m(\xi)$ is homogeneous of degree $m$ and satisfies the normal ellipticity condition and there exists a constant  $K$ such that
	\begin{equation*}
		\lVert a_{\alpha,\tau} \rVert \leq K, \quad (\abs{\alpha} \leq m, \tau \in [0,1]) .
	\end{equation*}
	Applying the perturbation argument of Lemma~\ref{prop:pertubation} we see that there exist $\varphi, \gamma, \omega, M > 0$ independent of $\tau$ such that
	\begin{equation*}
		\lVert (\lambda + a_\tau(\xi))^{-1} \rVert \leq \frac{M}{\abs{\xi}^m + \abs{\lambda + \gamma\abs{\xi}^m}} \quad (\xi \in \R^d,~ \lambda \in \Sigma_{\varphi,-\gamma\abs{\xi}^m + \omega}) .
	\end{equation*}
	Let $(S_{t,\tau}(\xi))_{t \geq 0}$ be the semigroup generated by $-a_\tau(\xi)$. Under these conditions, it follows that for each multi-index $\alpha$ there exists a constant $M_\alpha$ independent of $\tau$ such that
	\begin{equation*}
		\lVert \partial^\alpha S_{t,\tau}(\xi) \rVert \leq M_\alpha \euler^{\omega t - \mu\abs{\xi}^m t} .
	\end{equation*}
\end{remark}
\begin{lemma}\label{lem:schwartz}
	Let $A$ be a normally elliptic differential operator with symbol $a$, denote for each $\xi \in \R^d$ the semigroup generated by $-a(\xi)$ by $(S_t(\xi))_{t \geq 0}$, and let $f \in \Schw(\R^d;X)$. For all $t \geq 0$ we define $S_tf: \R^d \to X, \xi \mapsto S_t(\xi)f(\xi)$. Then we have $S_tf \in \Schw(\R^d;X)$ and
	\begin{equation}\label{eq:c0}
		(S_tf -f) \to 0, \quad 
	\end{equation}
	and
	\begin{equation}\label{eq: s_a2}
		\frac{1}{t}\left(S_tf - f\right) \to -af
	\end{equation}
	in the topology of $\Schw(\R^d;X)$ as $t \to 0$.
\end{lemma}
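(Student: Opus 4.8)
The plan is to work with the Fréchet topology on $\Schw(\R^d;X)$ generated by the seminorms $q_{k,\beta}(g) = \sup_{\xi \in \R^d}(1+\lvert\xi\rvert)^k\lVert\partial^\beta g(\xi)\rVert_X$, $k \in \N_0$, $\beta \in \N_0^d$, and to reduce both assertions to two uniform pointwise Taylor estimates for the operator family $S_t(\xi) = \euler^{-t a(\xi)}$ (which is the semigroup generated by $-a(\xi) \in \L(X)$). First I would record the elementary facts: since $a = \sum_{\lvert\alpha\rvert\leq m}a_\alpha(\cdot)^\alpha$ and $a^2$ are polynomials with values in $\L(X)$ of degrees $m$ and $2m$, we have $\lVert\partial^\gamma a(\xi)\rVert \leq C_\gamma(1+\lvert\xi\rvert)^m$ and $\lVert\partial^\gamma a(\xi)^2\rVert \leq C_\gamma(1+\lvert\xi\rvert)^{2m}$ for every $\gamma$, and $\xi\mapsto S_t(\xi)$ is smooth (the exponential is entire on $\L(X)$, composed with a polynomial). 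Using the Leibniz rule together with the bound $\lVert\partial^\gamma S_t(\xi)\rVert \leq K_\gamma\euler^{\omega t}$ from Lemma~\ref{lem:symbol_est} (for fixed $t$ a constant in $\xi$), one gets $q_{k,\beta}(S_tf) < \infty$, so $S_tf \in \Schw(\R^d;X)$; likewise $af \in \Schw(\R^d;X)$ since $\partial^\gamma a$ grows at most polynomially and $f$ is Schwartz.

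Next I would establish the two pointwise estimates. For $t \in (0,1]$ and $\xi \in \R^d$, the fundamental theorem of calculus applied to the $C^1$ map $s \mapsto S_s(\xi)$ gives $S_t(\xi) - I = -\int_0^t a(\xi)S_s(\xi)\,\diff s$, and Taylor's formula with integral remainder (using $\tfrac{\diff^2}{\diff s^2}S_s(\xi) = a(\xi)^2 S_s(\xi)$) gives $S_t(\xi) - I + t\,a(\xi) = \int_0^t (t-s)\,a(\xi)^2 S_s(\xi)\,\diff s$. Differentiating under the integral sign in $\xi$ (permissible since the integrands are smooth in $\xi$ with $\xi$-derivatives bounded locally uniformly in $s \in [0,t]$), applying the Leibniz rule to $a(\xi)S_s(\xi)$ and $a(\xi)^2 S_s(\xi)$, and using the polynomial bounds on $\partial^\gamma a$, $\partial^\gamma a^2$ together with the bound $\lVert\partial^\gamma S_s(\xi)\rVert \leq K_\gamma\euler^{\omega}$ — valid for all $\xi$ and all $s \in [0,1]$, obtained from \eqref{eq: s_a} by discarding the factor $\euler^{-\mu\lvert\xi\rvert^m s} \leq 1$ — one obtains, for every multi-index $\gamma$, a constant $C_\gamma$ such that
\[
\lVert\partial^\gamma (S_t(\xi) - I)\rVert \leq C_\gamma\,t\,(1+\lvert\xi\rvert)^{m},
\qquad
\Bigl\lVert\partial^\gamma\Bigl(\tfrac1t\bigl(S_t(\xi) - I + t\,a(\xi)\bigr)\Bigr)\Bigr\rVert \leq C_\gamma\,t\,(1+\lvert\xi\rvert)^{2m}
\]
for all $\xi \in \R^d$ and $t \in (0,1]$.

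Finally I would conclude. Writing $S_tf - f = (S_t(\cdot) - I)f$ and $\tfrac1t(S_tf - f) + af = \tfrac1t\bigl(S_t(\cdot) - I + t\,a(\cdot)\bigr)f$, applying the Leibniz rule, multiplying by $(1+\lvert\xi\rvert)^k$, and inserting the two estimates above yields, for all $k \in \N_0$ and $\beta \in \N_0^d$,
\[
q_{k,\beta}(S_tf - f) \leq t\sum_{\gamma\leq\beta}\binom{\beta}{\gamma}C_\gamma\, q_{k+m,\beta-\gamma}(f),
\qquad
q_{k,\beta}\Bigl(\tfrac1t(S_tf - f) + af\Bigr) \leq t\sum_{\gamma\leq\beta}\binom{\beta}{\gamma}C_\gamma\, q_{k+2m,\beta-\gamma}(f) .
\]
The right-hand sides are finite because $f \in \Schw(\R^d;X)$ and tend to $0$ as $t \to 0$, which proves \eqref{eq:c0} and \eqref{eq: s_a2}.

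I expect the only real obstacle to be obtaining the $\xi$-uniform, $t$-linear control of the $\xi$-derivatives of $S_t(\xi) - I$ and of its second-order Taylor remainder: the naive exponential bound $\euler^{s\lVert a(\xi)\rVert}$ blows up as $\lvert\xi\rvert \to \infty$, so one genuinely needs the exponential-decay estimate of Lemma~\ref{lem:symbol_est} (not merely the scalar bound \eqref{eq:exp}) to dominate $\partial^\gamma S_s(\xi)$ uniformly in $\xi$ on the time interval $[0,1]$; once that bound is in hand, the interchange of $\partial_\xi$ with $\int_0^t\diff s$ and all the Leibniz bookkeeping are routine.
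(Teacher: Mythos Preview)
Your argument is correct and in the same spirit as the paper's proof, but it is organised somewhat more directly. The paper first passes to the shifted semigroup $T_t(\xi)$ generated by $b(\xi) = -a(\xi) + \gamma\lvert\xi\rvert^m - \omega$, and then feeds in the intermediate estimate \eqref{eq:valpha} obtained \emph{inside} the proof of Lemma~\ref{lem:symbol_est} (namely $\lVert\partial^\alpha T_\tau(\xi)\rVert \lesssim \tau(1+\lvert\xi\rvert)^{m-1} + \tau^{\lvert\alpha\rvert}(1+\lvert\xi\rvert)^{(m-1)\lvert\alpha\rvert}$); for the generator limit it uses the identity $\tfrac1t(T_t-1)-b = b\cdot\tfrac1t\int_0^t(T_\tau-1)\,\diff\tau$ together with a mean-value argument. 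You instead work directly with $S_t$, invoke only the \emph{statement} of Lemma~\ref{lem:symbol_est} (discarding the decay factor $\euler^{-\mu\lvert\xi\rvert^m s}\leq 1$ to get a bound uniform in $\xi$ on $s\in[0,1]$), and replace the paper's mean-value step by the second-order Taylor remainder $\int_0^t(t-s)\,a(\xi)^2 S_s(\xi)\,\diff s$. Your route is a little cleaner in that it does not reach back into the proof of Lemma~\ref{lem:symbol_est}; the paper's route gives slightly sharper powers of $(1+\lvert\xi\rvert)$ in the intermediate bounds, but this is irrelevant once one multiplies by a Schwartz function.
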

\begin{proof}
	To show \eqref{eq:c0}, we need to prove that for all multi-indices $\alpha$ and $\beta$ we have
	\begin{equation*}
	\sup_{\xi \in \R^d}	\lVert \xi^\beta \partial^\alpha\left(S_t(\xi)f(\xi) - f(\xi) \right) \rVert \to 0 \quad (t \to 0) .
	\end{equation*}
	Using the Leibniz rule, it is easy to see that we need to show that for each multi-index $\alpha$ and $t \geq 0$, there exists $\Phi_\alpha(t) \geq 0$ and $N_\alpha > 0$ such that $\Phi_\alpha(t) \to 0$ as $t \to 0$ and
	\begin{equation*}
	\lVert \partial^\alpha(S_t(\xi) - 1) \rVert \leq \Phi_\alpha(t)(1 + \abs{\xi})^{N_{\alpha}}, \quad (\xi \in \R^d) .
	\end{equation*}
	In fact, by another application of the Leibniz rule, we may reduce matters to proving
	\begin{equation*}
	\lVert \partial^\alpha(T_t(\xi) - 1) \rVert \leq \Phi_\alpha(t)(1 + \abs{\xi})^{N_{\alpha}}, \quad (\xi \in \R^d) ,
	\end{equation*}
	where $T_t(\xi)$ is (as in the proof of Lemma~\ref{lem:symbol_est}) the semigroup generated by $b(\xi) = -a(\xi) + \gamma \lvert \xi \rvert^m - \omega$ with $\gamma$ as in Proposition~\ref{prop:pertubation}. 
	Suppose that $B$ is a sectorial operator on $X$ and $(V_t)_{t \geq 0}$ the associated semigroup. Then we have
	\begin{equation*}
		V_t - 1 = B \int\limits_0^t V_\tau \diff \tau, \quad (t \geq 0) .
	\end{equation*}
	Applying this with $B = b(\xi)$ where $\xi \in \R^d$, we obtain by the Leibniz rule and \eqref{eq: s_a} that there exist $C > 0$ and $C_\alpha > 0$ such that
	\begin{multline*}
		\lVert \partial^\alpha(T_t(\xi) - 1) \rVert \leq C\sum_{\beta \leq \alpha} \lVert \partial^{\alpha - \beta}b(\xi) \rVert \int\limits_0^t \lVert T^{(\beta)}_\tau \rVert \diff \tau \\ \leq C_\alpha(1 + \abs{\xi})^m\int\limits_0^t \diff \tau \leq C_\alpha t(1 + \abs{\xi})^m .
	\end{multline*}
	To show \eqref{eq: s_a2}, we need to prove that for all multi-indices $\alpha$ and $\beta$ we have
	\begin{equation*}
	\sup_{\xi \in \R^d}	\left\lVert \xi^\beta \partial^\alpha \left[ \frac{1}{t}\left(S_t(\xi)f(\xi) - f(\xi) \right) + a(\xi)f(\xi)\right]\right\rVert \to 0  
	\end{equation*}
	as $t$ tends to zero. Again, we may reduce matters to proving that for each multi-index $\alpha$ there exist $\Phi_\alpha(t) \geq 0$ and $N_\alpha > 0$ such that for all $\xi \in \R^d$ we have
	\begin{equation}\label{eq:vt}
	\left\lVert\frac{1}{t}\partial^\alpha(T_t(\xi) - 1) - \partial^\alpha b(\xi)\right\rVert \leq \Phi_\alpha(t)(1 + \abs{\xi})^{N_{\alpha}} .
	\end{equation}
	Since
	\begin{equation*}
		\frac{1}{t}(T_t(\xi) - 1) - b(\xi) = b(\xi)\frac{1}{t}\int\limits_0^t (T_\tau(\xi) - 1) \diff \tau ,
	\end{equation*}
	by the mean value theorem for integrals, we have that
	\begin{align}
		\biggl\lVert\frac{1}{t}\int\limits_0^t T_\tau(\xi) - 1 \diff \tau \biggr\rVert &\leq \sup\limits_{0 \leq s \leq t} \lVert T_s(\xi) - 1 \rVert \leq \sup\limits_{0 \leq s \leq t} \biggl\lVert b(\xi)\int\limits_0^s T_\tau(\xi) \diff \tau \biggr\rVert 
		%\leq C\lVert b(\xi) \rVert \int\limits_0^t  \diff\tau 
		\leq Ct \lVert b(\xi) \rVert \label{eq:mean} .
	\end{align}
	Therefore, we obtain
	\begin{equation*}
		\left\lVert\frac{1}{t}(T_t(\xi) - 1) - b(\xi)\right\rVert \leq Ct \lVert b(\xi) \rVert^2 \leq Ct(1 + \abs{\xi})^{2m} .
	\end{equation*}
	This proves \eqref{eq:vt} in the case that $\alpha = 0$. If $\alpha > 0$, we may write
	\begin{multline*}
	\frac{1}{t}\partial^\alpha(T_t(\xi) - 1) - \partial^\alpha b(\xi) \\ = (\partial^\alpha b)(\xi)\frac{1}{t}\int\limits_0^t (T_\tau(\xi) - 1) \diff \tau + \sum_{\beta < \alpha} \genfrac(){0pt}{}{\alpha}{\beta} (\partial^{\alpha - \beta}b)(\xi)\frac{1}{t}\int\limits_0^t T^{(\beta)}_\tau(\xi) \diff \tau .
	\end{multline*}
	We obtain from \eqref{eq:mean} that
	\begin{equation*}
	\biggl\lVert (\partial^\alpha b)(\xi)\frac{1}{t}\int\limits_0^t (T_\tau(\xi) - 1) \diff \tau \biggr\rVert \leq C_\alpha t\lVert \partial^\alpha b(\xi) \rVert \lVert b(\xi) \rVert \leq C_\alpha t(1 + \abs{\xi})^{2m - \abs{\alpha}}
	\end{equation*}
	If $0 \leq t \leq 1$, then it follows from \eqref{eq:valpha} that
	\begin{equation*}
	\lVert T^{(\beta)}_\tau(\xi) \rVert \leq C_\beta t(1 + \abs{\xi})^{(m - 1)\abs{\beta}}
	\end{equation*}
	which shows that
	\begin{align*}
	\biggl\lVert(\partial^{\alpha - \beta}b)(\xi)\frac{1}{t}\int\limits_0^t T^{(\beta)}_\tau(\xi) \diff \tau \biggr\rVert &\leq C_\beta(1 + \abs{\xi})^{(m-1)\abs{\beta}}\lVert \partial^{\alpha - \beta}b(\xi)\rVert\int\limits_0^t \diff \tau \\ &\leq C_{\alpha,\beta}t(1 + \abs{\xi})^{(m-1)\abs{\beta}}(1 + \abs{\xi})^{\abs{\beta}} ,
	\end{align*}
	where we have used in the second line that $m - \abs{\alpha - \beta} = m - m + \abs{\beta} = \abs{\beta}$. Summing up, we obtain
	\begin{equation*}
	\left\lVert \frac{1}{t}\partial^\alpha(T_t(\xi) - 1) - \partial^\alpha b(\xi)\right\rVert \leq C_\alpha t(1 + \abs{\xi})^{2m - \abs{\alpha}}
	\end{equation*}
	which concludes the proof.
	\end{proof}
		Let $A:\Schw^\prime(\R^d;X) \to \Schw^\prime(\R^d;X)$ be a normally elliptic differential operator with symbol $a$ and for each $\xi \in \R^d$, denote by $(S_t(\xi))_{t \geq 0}$ the semigroup generated by $-a(\xi)$ and by $S_t : \R^d \to \L(X)$ the mapping $\xi \mapsto S_t (\xi)$. As a consequence of \eqref{eq: s_a}, we obtain that for all $t \geq 0$ we have that $S_t \in \Schw(\R^d;\L(X)) \subseteq \mathcal{O}_M(\R^d;\L(X))$. Therefore, the Fourier multiplier
		\begin{equation*}
			V_t = S_t(D):\Schw'(\R^d;X) \to \Schw'(\R^d;X), \quad f \mapsto \mathcal{F}^{-1}S_t\mathcal{F}f
		\end{equation*}
		is well defined. Let $1 \leq p \leq \infty$. From Lemma~\ref{lem:mult} with $m = S_t$ we obtain that there exist constants $K$ and $\omega$ such that
		\begin{equation*}
			\lVert V_t \rVert_{L^p(\R^d;X) \to L^p(\R^d;X)} \leq Ke^{t\omega} ,
		\end{equation*}
		and by checking on elementary tensors, we see that the semigroup property
		\begin{equation*}
			 V_tV_s = V_{t + s}, \quad (s,t \geq 0)
		\end{equation*}
		holds. Thus, $\smash{(V_t^{(p)})_{t \geq 0} = (V_t|_{L^p(\R^d;X)})_{t \geq 0}}$ is a bounded semigroup. 
		If $p < \infty$, then it follows from the density of $\Schw(\R^d;X)$ in $L^p(\R^d;X)$ and the first statement of Lemma~\ref{lem:schwartz} that $\smash{V_t^{(p)}}$ is a $C_0$-semigroup.
		We denote the negative of the generator of $\smash{V_t^{(p)}}$ by $\tilde A_p$.
		\begin{lemma}
		We have $A_p = \tilde A_p$. In particular, $-A_p$ generates a semigroup given by $S_t(D)|_{L^p(\R^d)}$.
		\end{lemma}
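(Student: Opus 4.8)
The plan is to exploit that $A_p$ and $\tilde A_p$ are both extensions of $A = a(D)$ restricted to $\Schw(\R^d;X)$: I would show this restriction is a core for $\tilde A_p$ and that $A_p$ is closed, which gives $\tilde A_p \subseteq A_p$, and then remove the remaining ambiguity by a resolvent argument, since for a real spectral parameter $\lambda$ large enough $\lambda + \tilde A_p$ is a bijection onto $L^p(\R^d;X)$ while $\lambda + A_p$ is injective. First I would record that $\Schw(\R^d;X)$ lies in both domains with matching action: for $f \in \Schw(\R^d;X)$ one has $Af = \F^{-1}(a\,\F f) \in \Schw(\R^d;X) \subseteq L^p(\R^d;X)$, so $f \in \dom(A_p)$ and $A_p f = Af$; and applying Lemma~\ref{lem:schwartz} to the Schwartz function $\F f$ and then the continuous map $\F^{-1}$ yields $t^{-1}(V_t f - f) \to -\F^{-1}(a\,\F f) = -Af$ in $\Schw(\R^d;X)$, hence in $L^p(\R^d;X)$, as $t \to 0$, so $f \in \dom(\tilde A_p)$ with $\tilde A_p f = Af = A_p f$.

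Next I would check that $A_p$ is closed: if $f_n \to f$ and $A f_n \to g$ in $L^p(\R^d;X)$, then, since the embedding $L^p(\R^d;X) \hookrightarrow \Schw'(\R^d;X)$ is continuous and $A = a(D)$ is continuous on $\Schw'(\R^d;X)$, we get $Af = \lim_n A f_n = g$ in $\Schw'(\R^d;X)$, whence $g = Af \in L^p(\R^d;X)$, i.e.\ $f \in \dom(A_p)$ and $A_p f = g$. Then I would argue that $\Schw(\R^d;X)$ is a core for $\tilde A_p$: it is dense in $L^p(\R^d;X)$ and invariant under each $V_t^{(p)}$, because $V_t f = \F^{-1}(S_t\,\F f)$ with $S_t \in \Schw(\R^d;\L(X))$ for $t > 0$ by \eqref{eq: s_a} (so that $S_t\,\F f \in \Schw(\R^d;X)$) and $V_0 = \mathrm{id}$, and a dense subspace of the domain of a $C_0$-semigroup generator that is invariant under the semigroup is a core. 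Combining, $\tilde A_p = \overline{A|_{\Schw(\R^d;X)}} \subseteq A_p$, the closure taken in $L^p(\R^d;X)$.

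It then remains to prove $A_p \subseteq \tilde A_p$, and this is where I expect the real work. I would choose a real $\lambda$ so large that $\lambda + a(\xi)$ is invertible for every $\xi \in \R^d$ — which holds for all sufficiently large real $\lambda$ by Proposition~\ref{prop:pertubation} — and that $\lambda$ exceeds the exponential growth rate of $(V_t^{(p)})_{t \geq 0}$, so that $\lambda + \tilde A_p : \dom(\tilde A_p) \to L^p(\R^d;X)$ is bijective. I would then show $\lambda + A_p$ is injective: if $f \in \dom(A_p)$ with $(\lambda + A_p) f = 0$, then $(\lambda + a)\,\F f = 0$ in $\Schw'(\R^d;X)$; differentiating the resolvent identity as in the proof of Lemma~\ref{lem:symbol_est} shows the symbol $r_\lambda(\xi) = (\lambda + a(\xi))^{-1}$ has all derivatives bounded by polynomials in $\abs{\xi}$, so $r_\lambda \in \mathcal{O}_M(\R^d;\L(X))$, and multiplying the identity by $r_\lambda$ — using associativity of the $\mathcal{O}_M$-multiplication on $\Schw'(\R^d;X)$, checked on elementary tensors and extended by hypocontinuity, together with $r_\lambda(\xi)(\lambda + a(\xi)) = \mathrm{id}_X$ — gives $\F f = 0$, hence $f = 0$. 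Finally, for arbitrary $f \in \dom(A_p)$ I would set $g = (\lambda + A_p) f$ and $\tilde f = (\lambda + \tilde A_p)^{-1} g \in \dom(\tilde A_p) \subseteq \dom(A_p)$; since $\tilde A_p \subseteq A_p$ we have $(\lambda + A_p)\tilde f = (\lambda + \tilde A_p)\tilde f = g = (\lambda + A_p)f$, so injectivity forces $f = \tilde f \in \dom(\tilde A_p)$, giving $A_p = \tilde A_p$; the last assertion is then immediate since $V_t^{(p)} = S_t(D)|_{L^p(\R^d;X)}$ by construction. The main obstacle is this last step: one must verify carefully that $r_\lambda \in \mathcal{O}_M(\R^d;\L(X))$ and that the distributional identity may legitimately be multiplied through by $r_\lambda$ within the vector-valued tempered-distribution calculus of Section~\ref{sec:preliminaries}; everything else is routine semigroup theory.
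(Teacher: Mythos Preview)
Your argument is correct, and the first half --- showing $\tilde A_p \subseteq A_p$ via the core property of $\Schw(\R^d;X)$ together with closedness of $A_p$ --- coincides with the paper's proof almost verbatim. The difference lies in the reverse inclusion $A_p \subseteq \tilde A_p$. The paper does not run a resolvent argument; instead it invokes Lemma~2.5.5 of \cite{HytoenenNVW-16} (with the remark that the proof there extends to operator-valued coefficients) to conclude that $\D(\R^d;X)$ is already a core for $A_p$, i.e.\ $\mathrm{Graph}(A_p) = \overline{\{(f,Af):f\in\D(\R^d;X)\}}$, which combined with the first step immediately gives equality. Your route --- pick $\lambda$ large so that $\lambda+\tilde A_p$ is bijective, show $r_\lambda=(\lambda+a(\cdot))^{-1}\in\mathcal{O}_M(\R^d;\L(X))$ via the derivative formulas for the resolvent already used in the proof of Lemma~\ref{lem:symbol_est}, and deduce injectivity of $\lambda+A_p$ by multiplying $(\lambda+a)\F f=0$ through by $r_\lambda$ using the identity $m_1(D)m_2(D)=(m_1m_2)(D)$ from Section~\ref{sec:preliminaries} --- is a perfectly valid and standard alternative. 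It has the advantage of being self-contained within the paper's own machinery, avoiding the external citation and the need to check that the cited lemma's proof generalizes; the paper's approach is shorter on the page but leans on that reference. The technical point you flag (that $r_\lambda\in\mathcal{O}_M$ and that the symbol multiplication is associative on $\Schw'(\R^d;X)$) is genuine but poses no difficulty given what is already established in the paper.
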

		\begin{proof}
		Let us start by showing the inclusion $\tilde A_p \subseteq A_p$. Using the second statement of Lemma~\ref{lem:schwartz} we have
		\begin{equation*}
			\frac{1}{t}(V^{(p)}_tf -f) \to -Af
		\end{equation*}
		in the topology of $\Schw(\R^d;X)$ as $t \to 0$, and thus $\tilde A_pf = Af = A_pf$ for $f \in \Schw(\R^d;X)$. Moreover, $\Schw(\R^d;X)$ is dense in $\dom(\tilde A_p)$ since $\Schw(\R^d;X)$ is dense in $L^p(\R^d;X)$ and $\Schw(\R^d;X)$ is invariant under $\smash{V_t^{(p)}}$. Hence, using the notation $\mathcal{X}^p = L^p(\R^d;X)$, we conclude
		\begin{align*}
			\mathrm{Graph}(\tilde A_p) &= \overline { \{(f, \tilde A_pf): f \in \Schw(\R^d;X) \} }^{\mathcal{X}^p \times \mathcal{X}^p} \\ &= \overline { \{(f,A_pf): f \in \Schw(\R^d;X) \} }^{\mathcal{X}^p \times \mathcal{X}^p} 
			 \subseteq \overline { \mathrm{Graph} (A_p) }^{\mathcal{X}^p \times \mathcal{X}^p}.
		\end{align*}
		Since the embedding $J:L^p(\R^d;X) \hookrightarrow \Schw^\prime(\R^d;X)$ is continuous and $\mathrm{Graph}(A)$ is closed, $\mathrm{Graph}(A_p) = (J \times J)^{-1}\mathrm{Graph}(A)$ is closed.
		\par
		Now, observe that it follows directly from Lemma~2.5.5 in \cite{HytoenenNVW-16} that
		\begin{equation*}
			\mathrm{Graph} (A_p) = \overline { \{(f,Af): f \in \D(\R^d;X) \} }^{\mathcal{X}^p \times \mathcal{X}^p}.
		\end{equation*}
		Note that in \cite{HytoenenNVW-16}, it is assumed that the coefficients of $A$ are scalar. However, the proof given there generalizes to operator coefficients without change. Since
		\begin{equation*}
			\overline { \{(f,Af): f \in \D(\R^d;X) \} }^{\mathcal{X}^p \times \mathcal{X}^p} \subseteq \overline { \{(f,Af): f \in \Schw(\R^d;X) \} }^{\mathcal{X}^p \times \mathcal{X}^p} = \mathrm{Graph}(A_p) ,
		\end{equation*}
		we obtain $\tilde A_p = A_p$.
		\end{proof}
\subsection{Observability estimate}
Let $m \in \N$ and $A:\Schw^\prime(\R^d;X) \to \Schw^\prime(\R^d;X)$ is a normally elliptic differential operator of order $m$ with symbol $a \in \mathcal{P}_m(\R^d;X)$. Set
\begin{equation*}
	S_t:\R^d \to \L(X), \quad \xi \mapsto S_t(\xi)
\end{equation*}
where $(S_t(\xi))_{t \geq 0}$ denotes the analytic semigroup generated by $-a(\xi)$. Furthermore, for $t \geq 0$ we define $V_t = S_t(D):\Schw^\prime(\R^d;X) \to \Schw^\prime(\R^d;X)$ the Fourier multiplier with symbol $S_t$. Let $p \in [1,\infty]$. Then the restriction $\smash{(V_t^{(p)})_{t \geq 0} = (V_t|_{L^p(\R^d;X)})_{t \geq 0}}$ is a bounded semigroup on $L^p (\R^d ; X)$. If $p < \infty$, the semigroup $\smash{(V_t^{(p)})_{t \geq 0}}$ is strongly continuous and we denote its generator by $A_p$. In the following, we will write $\smash{V_t = V_t^{(p)}}$ when there's no risk of confusion.
\begin{theorem}\label{thm:obs}
Let $\rho,T > 0$, $L \in (0,\infty)^d$, $E \subseteq \R^d$ a $(\rho,L)$-thick set, and $1 \leq p,r \leq \infty$. Then there exists a constant $C_{\mathrm{obs}} > 0$ such that for all $f \in L^p(\R^d;X)$ it holds that
\begin{equation*}
	\lVert V_Tf \rVert_{L^p(\R^d;X)} \leq C_{\mathrm{obs}}\lVert V_{(\cdot)}f \rVert_{L^r([0,T];L^p(E;X))} .
\end{equation*}
\end{theorem}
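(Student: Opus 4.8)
The plan is to deduce this observability estimate from the abstract Lebeau--Robbiano-type theorem for semigroups on Banach spaces established in \cite{GallaunST-20,BombachGST-23}. That result provides, for every $r\in[1,\infty]$, a constant $C_{\mathrm{obs}}$ for which the asserted estimate holds, once two ingredients are verified for a suitable family $(P_\lambda)_{\lambda>0}$ of operators on $\mathcal{X}^p:=L^p(\R^d;X)$ that is uniformly bounded and commutes with every $V_t$: a \emph{spectral inequality} $\lVert P_\lambda f\rVert_{\mathcal{X}^p}\leq d_0\,\euler^{d_1\lambda^{\gamma_1}}\lVert P_\lambda f\rVert_{L^p(E;X)}$, and a \emph{dissipation estimate} $\lVert(I-P_\lambda)V_tf\rVert_{\mathcal{X}^p}\leq d_2\,\euler^{-d_3\lambda^{\gamma_2}t}\lVert f\rVert_{\mathcal{X}^p}$ for $t\in(0,T]$, with constants $d_0,d_1,d_2,d_3>0$ and exponents $0<\gamma_1<\gamma_2$. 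Since the restriction $f\mapsto f|_E$ is a bounded operator $\mathcal{X}^p\to L^p(E;X)$ with $\lVert f|_E\rVert_{L^p(E;X)}=\lVert\Eins_E f\rVert_{\mathcal{X}^p}$, this delivers the theorem.

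For the family $(P_\lambda)$ I would fix $\chi\in\D(\R^d)$ with $0\leq\chi\leq1$, $\chi\equiv1$ on $[-1,1]^d$, $\spt\chi\subseteq[-2,2]^d$, set $\chi_\lambda=\chi(\cdot/\lambda)$, and take $P_\lambda=\chi_\lambda(D)$. By scaling $\F^{-1}\chi_\lambda=\lambda^d(\F^{-1}\chi)(\lambda\,\cdot)$, so $\lVert\F^{-1}\chi_\lambda\rVert_{L^1(\R^d)}=\lVert\F^{-1}\chi\rVert_{L^1(\R^d)}$ is independent of $\lambda$, and the vector-valued Young inequality recalled in Section~\ref{sec:preliminaries} gives $\lVert P_\lambda\rVert_{\L(\mathcal{X}^p)}\leq\lVert\F^{-1}\chi\rVert_{L^1(\R^d)}$ for all $p\in[1,\infty]$; moreover $P_\lambda$ commutes with $V_t$ since both are Fourier multipliers. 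As $\spt\F(P_\lambda f)\subseteq[-2\lambda,2\lambda]^d\subseteq\Pi_\mu$ with $\mu=(5\lambda,\dots,5\lambda)$, Theorem~\ref{thm:LS} applied to $P_\lambda f$ and the $(\rho,L)$-thick set $E$ yields
\begin{equation*}
\lVert P_\lambda f\rVert_{\mathcal{X}^p}\leq(C_{\mathrm{LS}}/\rho)^{C_{\mathrm{LS}}(d+5\lambda\sum_{i=1}^{d}L_i)}\,\lVert P_\lambda f\rVert_{L^p(E;X)},
\end{equation*}
the spectral inequality with $\gamma_1=1$.

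For the dissipation estimate, $(I-P_\lambda)V_t$ is the Fourier multiplier with operator-valued symbol $\sigma_{\lambda,t}(\xi)=(1-\chi_\lambda(\xi))S_t(\xi)$, supported in $\{\abs{\xi}\geq\lambda\}$. There Lemma~\ref{lem:symbol_est} gives $\lVert\partial^\alpha S_t(\xi)\rVert\leq K_\alpha\euler^{\omega t-\mu\abs{\xi}^mt}$, and the Leibniz rule on $\sigma_{\lambda,t}$ --- the derivatives of $\chi_\lambda$ contribute factors $\lambda^{-\abs{\alpha}}$ supported on the annulus $\lambda\leq\abs{\xi}\leq2\lambda$, hence harmless once $\lambda\geq1$ --- shows $\lVert\partial^\alpha\sigma_{\lambda,t}(\xi)\rVert\leq C_\alpha\euler^{\omega t}\euler^{-\mu\abs{\xi}^mt}$ on $\{\abs{\xi}\geq\lambda\}$ for $\abs{\alpha}\leq d+1$. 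Feeding this into the vector-valued Mikhlin multiplier theorem Lemma~\ref{lem:mult} (splitting the factor $\euler^{-\mu\abs{\xi}^mt}$ so as to absorb the polynomial weights $\abs{\xi}^{\abs{\alpha}+\varepsilon}$ while retaining decay), I obtain $\lVert(I-P_\lambda)V_t\rVert_{\L(\mathcal{X}^p)}\leq C\,\euler^{\omega t}\lambda^{d+1+\varepsilon}\euler^{-\mu\lambda^mt}$ whenever $t\lambda^m$ is bounded below by a fixed constant, while trivially $\lVert(I-P_\lambda)V_t\rVert_{\L(\mathcal{X}^p)}\leq(1+\lVert P_\lambda\rVert)\,\lVert V_t\rVert\leq C\euler^{\omega t}$ always. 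Taking the minimum of the two bounds and absorbing the polynomial prefactor at the price of an arbitrarily small loss $\delta\in(0,1)$ in the exponent gives $\lVert(I-P_\lambda)V_t\rVert_{\L(\mathcal{X}^p)}\leq d_2\euler^{-d_3\lambda^{m-\delta}t}$ for all $\lambda>0$, $t\in(0,T]$. Finally, the principal symbol being homogeneous forces $a_m(-\xi)=(-1)^ma_m(\xi)$, and since $\sigma(a_m(\xi))$ lies in the open right half-plane for all $\abs{\xi}=1$ (because $0\in\Sigma_{\pi/2,0}\subseteq\rho(-a_m(\xi))$), the order $m$ is necessarily even for nontrivial $X$; hence $\gamma_2:=m-\delta>1=\gamma_1$ for $\delta$ small, and the hypotheses of the abstract theorem are met.

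The genuine obstacle is the dissipation estimate: transferring the pointwise derivative bounds of Lemma~\ref{lem:symbol_est} for the truncated, time-dependent operator-valued symbol $\sigma_{\lambda,t}$ into an $\L(\mathcal{X}^p)$-estimate through the necessarily lossy multiplier theorem Lemma~\ref{lem:mult}, and then recovering a clean exponential decay in $\lambda^{\gamma_2}t$ by also invoking the uniform bounds on $\lVert P_\lambda\rVert$ and $\lVert V_t\rVert$. The spectral inequality, by contrast, is immediate from Theorem~\ref{thm:LS}, and the telescoping Lebeau--Robbiano argument is taken off the shelf from \cite{GallaunST-20,BombachGST-23}. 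For $p<\infty$ the semigroup $(V_t)$ is strongly continuous and the cited abstract theorem applies directly; for $p=\infty$, where $(V_t)$ is merely bounded, all the quantitative ingredients above still hold and one argues along the same lines, or reduces to the case $p<\infty$ by approximation.
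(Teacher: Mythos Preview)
Your overall architecture matches the paper's: apply the abstract Lebeau--Robbiano theorem from \cite{BombachGST-23}, take $P_\lambda=\chi_\lambda(D)$, read off the spectral inequality from Theorem~\ref{thm:LS}, and derive a dissipation estimate from Lemma~\ref{lem:symbol_est} via the multiplier criterion Lemma~\ref{lem:mult}. The differences are in the execution of the dissipation estimate and the treatment of $p=\infty$.

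\textbf{Dissipation estimate.} The paper isolates this as a separate lemma (Lemma~\ref{lem:diss}) and obtains the clean bound $\lVert(I-P_\lambda)V_t\rVert\le c_1\euler^{-c_2\lambda^m t}$, with the full exponent $m$. The key device is a \emph{rescaling} of the symbol for small $t$: one studies $((1-\chi_\lambda)S_t)(t^{-1/m}\,\cdot)$, uses that $\lVert\F^{-1}[m(\mu\cdot)]\rVert_{L^1}=\lVert\F^{-1}m\rVert_{L^1}$, and observes that $t\,a(t^{-1/m}\xi)=a_m(\xi)+\text{lower order with uniformly bounded coefficients}$, so Lemma~\ref{lem:symbol_est} and Remark~\ref{rem:pertubation} apply with constants independent of $t$. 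This removes the polynomial loss entirely. Your direct approach instead produces $C\euler^{\omega t}\lambda^{d+1+\varepsilon}\euler^{-\mu\lambda^m t}$ in the regime $t\lambda^m\ge c_0$ and the trivial bound otherwise. Your claim that the minimum of these is $\le d_2\euler^{-d_3\lambda^{m-\delta}t}$ is \emph{true}, but the justification ``absorbing the polynomial prefactor at the price of a loss $\delta$'' glosses over the crucial point: along $t\lambda^m\approx c_0$ the polynomial is \emph{not} absorbed by $\euler^{-\mu\lambda^m t}$ (that factor is bounded below), and it is the trivial bound that saves you there, because $\lambda^{m-\delta}t\le c_0\lambda^{-\delta}$ stays bounded. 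A correct argument needs a three-region split in $t\lambda^m$ (small; of order $\log\lambda$; large), not two. Since $m\ge 2$ your weaker exponent $m-\delta>1$ still suffices for the abstract theorem, so after this repair your route works, but the paper's rescaling trick is both sharper and shorter.

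\textbf{The case $p=\infty$.} The abstract theorem requires measurability of $t\mapsto\lVert \Eins_E V_t f\rVert_{L^p}$. For $p<\infty$ this is immediate from strong continuity; for $p=\infty$ the semigroup is not strongly continuous, and your ``argue along the same lines, or reduce by approximation'' is not a proof. The paper handles this by noting that $L^1(\R^d;X')$ is norming for $L^\infty(\R^d;X)$, so $\lVert\Eins_E V_tf\rVert_{L^\infty}$ is a supremum of functions that are continuous in $t$, hence lower semicontinuous and therefore measurable. You should supply this argument.
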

We choose a function $\varphi \in C_c^\infty(\R)$ such that $0 \leq \varphi \leq 1$, $\spt \varphi \subseteq \ball{0}{1}$ and $\varphi = 1$ on $\ball{0}{1/2}$. For $\xi \in \R^d$ we set $\chi_\lambda(\xi) = \varphi(\abs{\xi} / \lambda)$ and define $P_\lambda = \chi_\lambda(D)$.
\begin{lemma}\label{lem:diss}
There exist constants $c_1,c_2, \lambda_0 > 0$ depending only on $a$ such that for all $t \geq 0$ and $\lambda \geq \lambda_0$ we have
\begin{equation*}
		\lVert \F^{-1}(1 - \chi_\lambda)S_t \rVert_{L^1(\R^d;\L(X))} \leq c_1 e^{-c_2t\lambda^m} .
\end{equation*}
Moreover, for all $p \in [1, \infty]$, $t \geq 0$ and $\lambda \geq \lambda_0$ we have
\begin{equation*}
	\lVert (I - P_\lambda)V_t \rVert_{L^p(\R^d;X) \to L^p(\R^d;X)} \leq c_1 e^{-c_2t\lambda^m} .
\end{equation*}
\end{lemma}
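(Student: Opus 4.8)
The plan is to estimate the Fourier multiplier symbol $m_\lambda(\xi) = (1 - \chi_\lambda(\xi)) S_t(\xi)$ directly via Lemma~\ref{lem:mult}. That lemma requires bounding the weighted $W^{d+1,\infty}$-type quantity
\[
\mu_\lambda(t) = \lVert m_\lambda \rVert_{W^{d+1,\infty}} + \max_{\lvert \alpha \rvert \leq d+1} \sup_{\xi \in \R^d} \lvert \xi \rvert^{\lvert \alpha \rvert + \varepsilon} \lVert \partial^\alpha m_\lambda(\xi) \rVert_{\L(X)},
\]
and then one has $\lVert \F^{-1} m_\lambda \rVert_{L^1} \leq C \mu_\lambda(t)$, which is exactly the first claimed inequality provided we show $\mu_\lambda(t) \leq c_1 e^{-c_2 t \lambda^m}$ for $\lambda \geq \lambda_0$. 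The second inequality then follows immediately: $(I - P_\lambda) V_t$ is the Fourier multiplier with symbol $m_\lambda$, so Young's inequality (as recorded in Lemma~\ref{lem:mult}) gives $\lVert (I - P_\lambda) V_t \rVert_{L^p \to L^p} \leq C \mu_\lambda(t) \leq c_1 e^{-c_2 t \lambda^m}$.

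First I would apply the Leibniz rule to $\partial^\alpha m_\lambda = \sum_{\beta \leq \alpha} \binom{\alpha}{\beta} \partial^{\alpha-\beta}(1-\chi_\lambda) \, \partial^\beta S_t$. The key structural point is that $1 - \chi_\lambda$ and all its derivatives vanish on $\ballc{\lambda/2}$, so every term in this sum is supported in $\{\lvert \xi \rvert \geq \lambda/2\}$. On that region I would use two ingredients: the exponential symbol bound from Lemma~\ref{lem:symbol_est}, namely $\lVert \partial^\beta S_t(\xi) \rVert \leq K_\beta e^{\omega t - \mu \lvert \xi \rvert^m t}$ for $\lvert \beta \rvert \leq d+1$; and the scaling bound $\lVert \partial^{\alpha-\beta}(1-\chi_\lambda) \rVert_\infty \leq c_\gamma \lambda^{-\lvert \alpha - \beta \rvert} \leq c_\gamma \max(1, \lambda^{-(d+1)})$ for $\lvert \alpha - \beta \rvert \geq 1$, together with $\lVert 1 - \chi_\lambda \rVert_\infty \leq 1$. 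Combining, for $\lvert \xi \rvert \geq \lambda/2$ and $\lvert \alpha \rvert \leq d+1$,
\[
\lVert \partial^\alpha m_\lambda(\xi) \rVert \leq C_{d,m,a} \, e^{\omega t} e^{-\mu t \lvert \xi \rvert^m} \leq C_{d,m,a} \, e^{\omega t} e^{-\mu t (\lambda/2)^m}.
\]
This already controls the $\lVert m_\lambda \rVert_{W^{d+1,\infty}}$ part. For the weighted part I multiply by $\lvert \xi \rvert^{\lvert \alpha \rvert + \varepsilon}$; using $\lvert \xi \rvert \geq \lambda/2$ I split the exponential as $e^{-\mu t \lvert \xi \rvert^m} = e^{-\frac{\mu}{2} t \lvert \xi \rvert^m} e^{-\frac{\mu}{2} t \lvert \xi \rvert^m}$ and absorb the polynomial factor $\lvert \xi \rvert^{\lvert \alpha \rvert + \varepsilon}$ times the decaying exponential $e^{-\frac{\mu}{2} t \lvert \xi \rvert^m}$ into a $t$-dependent constant — but to get a bound uniform in $t$ I instead bound $\lvert \xi \rvert^{d+1+\varepsilon} e^{-\frac{\mu}{2} t \lvert \xi\rvert^m}$ by its maximum over $\xi$, which is $C (\mu t)^{-(d+1+\varepsilon)/m}$, and then note that the remaining factor $e^{-\frac{\mu}{2} t (\lambda/2)^m}$ still decays like the desired exponential while $(\mu t)^{-(d+1+\varepsilon)/m} e^{-\frac{\mu}{4} t (\lambda/2)^m}$ is bounded for $\lambda \geq \lambda_0$ once $\lambda_0$ is large enough (the singularity at $t \to 0$ is killed by a sliver of the exponential, since $t^{-a} e^{-ct\lambda^m}$ is bounded for all $t > 0$ when $\lambda^m \gtrsim 1$ — indeed $\sup_{t>0} t^{-a} e^{-ct\lambda^m} = (a/(ec\lambda^m))^{-a}$ which is $\leq 1$ for $\lambda$ large). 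Tracking constants yields $\mu_\lambda(t) \leq c_1 e^{-c_2 t \lambda^m}$ with $c_2 = \mu/2^{m+2}$, say, and $\lambda_0$ chosen accordingly.

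The main obstacle is bookkeeping the interplay between the small-$t$ polynomial blow-up $t^{-(d+1+\varepsilon)/m}$ coming from the weighted seminorm and the exponential decay, so as to end up with a clean bound $c_1 e^{-c_2 t \lambda^m}$ valid uniformly for all $t \geq 0$ and all $\lambda \geq \lambda_0$; this forces the cutoff $\lambda \geq \lambda_0$ and the splitting of the exponent into several pieces (one to absorb the $\lvert \xi \rvert$-polynomial from the weight, one to absorb the small-$t$ singularity, one left over to provide the genuine decay). A secondary but minor point is verifying that $m_\lambda \in W^{d+1,\infty}(\R^d;\L(X))$ at all, so that Lemma~\ref{lem:mult} applies — this is immediate from $S_t \in \Schw(\R^d;\L(X))$ (a consequence of Lemma~\ref{lem:symbol_est}) and $\chi_\lambda \in C_c^\infty$. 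Everything else is a routine Leibniz-rule expansion and the elementary estimate $\lVert \partial^{\alpha-\beta}(1-\chi_\lambda) \rVert_\infty \leq c_\gamma \lambda^{-\lvert\alpha-\beta\rvert}$ obtained by differentiating $\chi_\lambda(\xi) = \varphi(\lvert\xi\rvert/\lambda)$.
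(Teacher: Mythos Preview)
Your overall strategy via Lemma~\ref{lem:mult} is the same as the paper's, and your treatment of the $W^{d+1,\infty}$ part is correct. The gap is in the weighted part, specifically in the small-$t$ regime. You claim that
\[
\sup_{t>0}\; t^{-a}\,\euler^{-c t\lambda^m} \;=\; \bigl(a/(ec\lambda^m)\bigr)^{-a},
\]
and that this is $\leq 1$ for large $\lambda$. Both assertions are false. The function $t\mapsto t^{-a}\euler^{-ct\lambda^m}$ is strictly decreasing on $(0,\infty)$ and blows up as $t\to 0^+$, so it has no finite supremum; and your formula, read literally, equals $(ec\lambda^m/a)^{a}$, which grows with $\lambda$. (You have confused the optimisation of $t^{a}\euler^{-bt}$, whose maximum is indeed $(a/(eb))^{a}$, with that of $t^{-a}\euler^{-bt}$.) Consequently your bound on
\[
\sup_{\lvert\xi\rvert\geq\lambda/2}\;\lvert\xi\rvert^{\lvert\alpha\rvert+\varepsilon}\,\euler^{\omega t-\mu t\lvert\xi\rvert^m}
\]
fails uniformly in $t$: when $t$ is small the exponential provides almost no decay, while the polynomial weight $\lvert\xi\rvert^{\lvert\alpha\rvert+\varepsilon}$ is unbounded on $\{\lvert\xi\rvert\geq\lambda/2\}$. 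No choice of $\lambda_0$ repairs this.

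The paper handles this by a rescaling that is genuinely needed. For $0<t\leq 1$ one substitutes $\xi\mapsto t^{-1/m}\xi$; since $\lVert\F^{-1}[m(\mu\,\cdot)]\rVert_{L^1}=\lVert\F^{-1}m\rVert_{L^1}$, it suffices to estimate the rescaled symbol $\sigma_{t,\lambda}(\xi)=(1-\chi_{t^{1/m}\lambda}(\xi))\,T_{1,t}(\xi)$, where $T_{1,t}$ is the time-$1$ value of the semigroup generated by $-t\,a(t^{-1/m}\cdot)$. The point is that the principal symbol of $t\,a(t^{-1/m}\cdot)$ equals $a_m$ independently of $t$, and its lower-order coefficients are bounded uniformly in $t\in(0,1]$; by Remark~\ref{rem:pertubation} one obtains $\lVert\partial^\alpha T_{1,t}(\xi)\rVert\leq K_\alpha\,\euler^{-\mu\lvert\xi\rvert^m}$ with constants independent of $t$. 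Now the weighted seminorm in Lemma~\ref{lem:mult} is controlled by $\sup_{\lvert\xi\rvert\geq t^{1/m}\lambda/2}\lvert\xi\rvert^{\lvert\alpha\rvert+\varepsilon}\euler^{-\mu\lvert\xi\rvert^m}$, which is finite and decays like $\euler^{-c\,(t^{1/m}\lambda)^m}=\euler^{-c\,t\lambda^m}$ once $t^{1/m}\lambda>1$. The remaining regime $t^{1/m}\lambda\leq 1$ is trivial: there $\euler^{-c_2 t\lambda^m}\geq\euler^{-c_2}$, so any uniform bound on $\lVert\F^{-1}\sigma_{t,\lambda}\rVert_{L^1}$ suffices. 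In short, your argument is essentially the paper's Case~1 ($t>1$), but the passage to small $t$ requires the rescaling device of Cases~2--3, which you have not supplied.
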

\begin{proof}
	We consider 3 separate cases. 
	\par
	{Case 1:} $t > 1$. Let $\varepsilon > 0$. By Lemma \ref{lem:mult}, it suffices to show that
	\begin{equation}\label{eq:fmobs}
		\lVert (1 - \chi_\lambda)S_t \rVert_{W^{d+1,\infty}} + \max\limits_{\abs{\alpha} \leq d+1} \sup\limits_{\xi \in \R^d} \abs{\xi}^{\abs{\alpha} + \varepsilon}\lVert \partial^\alpha ((1 - \chi_\lambda)(\xi)S_t(\xi)) \rVert \leq c_1 e^{-c_2\lambda^m t} .
	\end{equation}
	for some constants $c_1, c_2$. For this, we observe that by the Leibniz rule, for each multi-index $\alpha$, there exists a constant $C_\alpha$ such that
	\begin{equation*}
		\lVert \partial^\alpha ((1 - \chi_\lambda(\xi))S_t(\xi)) \rVert \leq C_\alpha\sum\limits_{\beta \leq \alpha} \abs{\partial^\beta(1 - \chi_\lambda)(\xi)}\lVert \partial^{\alpha - \beta}S_{t}(\xi) \rVert .
	\end{equation*}
	Observe that if $\lambda \geq 1$, there exists an absolute constant $C_\beta > 0$ such that
	\begin{equation*}
		\abs{\partial^\beta(1 - \chi_\lambda)(\xi)} \leq C_\beta\mathbf{1}_{\abs{\xi} \geq \lambda/2} .
	\end{equation*}
	Therefore, by \eqref{eq: s_a}, it follows that there exist $K_\alpha,\omega,\mu$ such that
	\begin{equation*}
		\lVert \partial^\alpha (1 - \chi_\lambda(\xi))S_t(\xi) \rVert \leq K_\alpha \mathbf{1}_{\abs{\xi} \geq \lambda/2}\euler^{\omega t - \mu \abs{\xi}^m t} .
	\end{equation*}
	Choosing $\lambda_0^m = \max\{1,2^{m+1}\mu^{-1}\omega\}$, we obtain for all multi-indices $\alpha$ such that $\abs{\alpha} \leq d+1$ and $\lambda \geq \lambda_0$
	\begin{equation*}
		\lVert \partial^\alpha ((1 - \chi_\lambda(\xi))S_t(\xi)) \rVert  \leq K_\alpha\euler^{-\mu 2^{-m-1}\lambda^m t} .
	\end{equation*}
	This shows that there exist constants, $c_1^\prime,c_2^\prime$ such that
	\begin{equation*}
		\lVert (1 - \chi_\lambda)S_t \rVert_{W^{d+1,\infty}} \leq c_1'\euler^{-c_2'\lambda^m t} .
	\end{equation*}
	Moreover, observe that
	\begin{equation*}
	\abs{\xi}^{\abs{\alpha} + \varepsilon}\lVert \partial^\alpha ((1 - \chi_\lambda)(\xi)S_t(\xi))\rVert \leq K_\alpha \abs{\xi}^{\abs{\alpha} + \varepsilon}\mathbf{1}_{\abs{\xi} \geq \lambda/2}\euler^{\omega t - \mu \abs{\xi}^m t} .
	\end{equation*}
	and thus, employing that $t > 1$, it follows that there exists $K_\alpha^\prime$ such that
	\begin{equation*}
	\abs{\xi}^{\abs{\alpha} + \varepsilon}\lVert \partial^\alpha ((1 - \chi_\lambda)(\xi)S_t(\xi))\rVert \leq K_\alpha^\prime \mathbf{1}_{\abs{\xi} \geq \lambda/2}\euler^{\omega t - (\mu/2) \abs{\xi}^m t} .
	\end{equation*}
	Arguing as before, we find $c_1^{\prime \prime},c_2^{\prime \prime}$ such that
	\begin{equation*}
		\abs{\xi}^{\abs{\alpha} + \varepsilon}\lVert \partial^\alpha (1 - \chi_\lambda(\xi))S_t(\xi) \rVert \leq c_1^{\prime \prime}\euler^{-c_2^{\prime \prime}\lambda^m t} .
	\end{equation*}
	We now obtain \eqref{eq:fmobs} by summing up.
	\par
	{Case 2:} $0 \leq t \leq 1,$ $t^{1/m}\lambda > 1$.
	We begin with two easy observations. Firstly, if $m:\R^d \to \L(X)$ is such that $\lVert \F^{-1}m \rVert_{L^1(\R^d;\L(X))} < \infty$, then for any $\mu > 0$, we have
	\begin{equation}\label{eq:rescale}
		\lVert \F^{-1}[m(\mu\cdot)] \rVert_{L^1(\R^d)} = \mu^{-d}\lVert (\F^{-1}m)(\mu^{-1}\cdot) \rVert_{L^1(\R^d)} = \lVert \F^{-1}m \rVert_{L^1(\R^d)} .
	\end{equation}
	Secondly, if $(W_t)_{t \geq 0}$ is a $C_0$-semigroup with generator $B$, then for any $\mu > 0$ the rescaled semigroup defined by $(\tilde W_t)_{t \geq 0} = (W_{\mu t})_{t \geq 0}$ is associated to $\mu B$.
	Denote by $(T_{\tau,t})_{\tau \geq 0}$ the semigroup on $X$ associated to $-ta(t^{-1/m}\cdot) \in \L(X)$. We consider the rescaled symbol
	\begin{equation*}
		\sigma_{t,\lambda} = ((1 - \chi_\lambda)S_t)(t^{-1/m}\cdot) = (1 - \chi_{t^{1/m}\lambda})S_t(t^{-1/m}\cdot) = (1 - \chi_{t^{1/m}\lambda})T_{1,t} .
	\end{equation*}
	It follows from \eqref{eq:rescale} that it suffices to show that there exist constants $c_1,c_2 > 0$ such that
	\begin{equation*}
		\lVert \F^{-1}\sigma_{t,\lambda} \rVert \leq c_1e^{-c_2t\lambda^m}
	\end{equation*}
	Observe that
	\begin{equation*}
		ta(t^{-1/m}\xi) = a_m(\xi) + \sum_{\abs \alpha < m}t^{1 - \frac{\abs{\alpha}}{m}}a_\alpha\xi^\alpha ,
	\end{equation*}
	and therefore $N_0(a_m(\xi) - ta(t^{-1/m}\xi)) \leq K$ for some constant $K$ independent of $t$.
	It thus follows from Lemma~\ref{lem:symbol_est} and Remark \ref{rem:pertubation} that for each multi-index $\alpha$ there exist constants $K_\alpha, \mu > 0$ such that
	\begin{equation*}
		\lVert \partial^\alpha T_{1,t}(\xi) \rVert \leq K_\alpha e^{-\mu\abs{\xi}^m} .
	\end{equation*}
	Moreover, since $t^{1/m}\lambda > 1$, we have that for each multi-index $\beta$ there exist constants $C_\beta > 0$ such that
	\begin{equation*}
		\abs{\partial^\beta(1 - \chi_{t^{1/m}\lambda})(\xi)} \leq C_\beta\mathbf{1}_{\abs{\xi} \geq t^{1/m}\lambda/2} \label{eq:T1t} .
	\end{equation*}
	By the Leibniz rule, it therefore follows that there exist constants $C_\alpha > 0$ such that
	\begin{equation*}
		\lVert \partial^\alpha \sigma_{t,\lambda}(\xi) \rVert \leq C_\alpha\mathbf{1}_{\abs{\xi} \geq t^{1/m}\lambda/2}e^{\omega t-\mu\abs{\xi}^m} .
	\end{equation*}
	Let $\varepsilon > 0$. Arguing as in Case 1, we see that there exist $\lambda_0 > 0$ and constants $c_1',c_2'$ and $c_1'', c_2''$ such that for all $\lambda \geq \lambda_0$
	\begin{equation*}
		\lVert \partial^\alpha \sigma_{t,\lambda}(\xi) \rVert \leq c_1'e^{-c_2't\lambda^m}
	\end{equation*}
	and
	\begin{equation*}
		\abs{\xi}^{m + \varepsilon}\lVert\partial^\alpha \sigma_{t,\lambda} \rVert \leq c_1''e^{-c_2''t\lambda^m}
	\end{equation*}
	for all multi-indices $\alpha$ with $\abs{\alpha} \leq d+1$. We can thus apply Lemma~\ref{lem:mult} also in this case. 
	\par
	{Case 3}: $0 \leq t \leq 1,$ $ 0 \leq t^{1/m}\lambda \leq 1$.
	Employing the notation of Case 2, we see from \eqref{eq:T1t} and Lemma~\ref{lem:mult} that there exists $A > 0$ such that
	\begin{equation*}
		\lVert \F^{-1}T_{1,t} \rVert \leq A .
	\end{equation*}
	Again by \eqref{eq:rescale} it follows that there exists $B > 0$ such that
	\begin{equation*}
		\lVert \F^{-1}(1 - \chi_{t^{1/m}\lambda}) \rVert \leq B .
	\end{equation*}
	It thus follows from Young's inequality that
	\begin{equation*}
		\lVert \mathcal{F}^{-1}\sigma_{t,\lambda} \rVert \leq AB .
	\end{equation*}
	Since we have due to the restriction $0 \leq t^{1/m}\lambda \leq 1$ for any $c > 0$ that
	\begin{equation*}
		AB \leq ABe^ce^{-ct\lambda^m}
	\end{equation*}
	the result also follows in this case.
\end{proof}
\begin{proof}[Proof of Theorem~\ref{thm:obs}]
	We apply Theorem~A.1 from \cite{BombachGST-23} to the semigroup $\smash{(V_t^{(p)})_{t \geq 0}}$ acting on the Banach space $L^p(\R^d;X)$ and the family of quasi-projections $(P_\lambda)_{\lambda > 0}$. We only need to verify that there exist positive constants $\lambda_0$, $d_0,d_1,d_2,d_3$ such that for all $f\in L^p(\R^d;X)$, all $\lambda > \lambda_0$ and all $t \in [0,T/2]$ we have
	\begin{align*} 
			 \lVert P_\lambda f \rVert_{L^p(\R^d;X)} &\le d_0 e^{d_1 \lambda} \lVert \mathbf{1}_EP_\lambda f \rVert_{L^p(\R^d;X)}  \\
			 \intertext{and}
			 \lVert (I-P_\lambda) V_t f \rVert_{L^p(\R^d;X)} &\le d_2 e^{-d_3 \lambda^{m} t} \lVert f \rVert_{L^p(\R^d;X)}  ,
	\end{align*}
	 and that the mapping $\Phi: [0,T] \ni t \mapsto \lVert \Eins_E V_tf \rVert_{L^p(\R^d;E)}$ is measurable. The first inequality is satisfied by Theorem~\ref{thm:LS}, whereas the second inequality follows from Lemma~\ref{lem:diss}. Measurability of $\Phi$ follows from the strong continuity of $V_t$ if $p < \infty$. Suppose now that $p = \infty$. By Proposition 1.3.1 of \cite{HytoenenNVW-16}, we have that the linear subspace
	\[
	\left\{ f \mapsto \int\limits_{\R^d} \dual{g(x),f(x)}_{X^\prime \times X}\diff x \colon g \in L^1(\R^d;X^\prime) \right\} \subseteq (L^\infty(\R^d;X))^{\prime}
	\]
	is norming for $L^\infty(\R^d;X)$, meaning that
	\[
	\Phi(t) = \lVert \Eins_E V_t f \rVert_{L^\infty(\R^d;X)} = \sup  \left\{ \int\limits_{\R^d} \dual{g(x),\Eins_E V_tf(x)}_{X^\prime \times X}\diff x \colon \lVert g \rVert_{L^1(\R^d;X^\prime)} = 1 \right\}.
	\]
	By the strong continuity of $V_t$, the map
	\[
	t \mapsto \int\limits_{\R^d} \dual{g(x),\Eins_E V_tf(x)}_{X^\prime \times X}\diff x
	\]
	is continuous for each $g \in L^1(\R^d;X^\prime)$. Thus, $\Phi$ is lower semicontinuous as it is the supremum of continuous functions and therefore measurable.
\end{proof}
\subsection{Null-controllability}
Let $E \subseteq \R^d$ be measurable, $p \in  [1,\infty)$ and $T > 0$. Set $\mathcal{X}^p = L^p(\R^d;X)$ and consider the controlled system
\begin{equation}\label{eq:control}
\partial_ty(t) + A_p y(t) = \mathbf{1}_Eu(t), \quad y(0) = y_0 \in \mathcal{X}^p, \quad t \in [0,T] .
\end{equation}
Let $r \in [1,\infty]$. Given a control function $u \in L^r([0,T];\mathcal{X}^p)$, the mild solution of \eqref{eq:control} is given by
\begin{equation*} 
 y(t) = V_ty_0 + \int\limits_0^t V_{t-s}\mathbf{1}_Eu(s) \diff s .
\end{equation*}
We say that the system \eqref{eq:control} is null-controllable in $L^r([0,T];\mathcal{X}^p)$ in time $T$ if for any $y_0 \in \mathcal{X}^p$ there exists an $u \in L^r([0,T];\mathcal{X}^p)$ such that $y(T) = 0$. Setting
\begin{equation*}
 \mathcal{B}_T:L^r([0,T];\mathcal{X}^p) \to \mathcal{X}^p, \quad u \mapsto \int\limits_0^T V_{t-s}\mathbf{1}_Eu(s) \diff s ,
 \end{equation*}
 we see that \eqref{eq:control} is null-controllable in $L^r([0,T];\mathcal{X}^p)$ at time $T$ if and only if $\ran (V_T) \subseteq \ran (\mathcal{B}_T)$. Moreover, we define $\eqref{eq:control}$ to be \emph{approximately null-controllable} at time $T$ if   $\ran (V_T) \subseteq \overline{\ran (\mathcal{B}_T)}$ with the bar denoting the norm closure of the set $\ran (\mathcal{B}_T)$ in $\mathcal{X}^p$. Thus, $\eqref{eq:control}$ is \emph{approximately null-controllable} at time $T$ if and only if for all $\varepsilon > 0$ and all $y_0 \in \mathcal{X}^p$ there exists $u \in L^r([0,T];\mathcal{X}^p)$ such that $\lVert y(T) \rVert_{\mathcal{X}^p}  < \varepsilon$.
 \begin{theorem} \label{thm:control}
 Let $\rho > 0$, $L \in (0,\infty)^d$ and $E$ $(\rho , L)$-thick, and assume that $X^\prime$ has the Radon-Nikodym property. Then,
 \begin{enumerate}[(a)]
 \item if $p \in (1,\infty)$, the system \eqref{eq:control} is null-controllable in $L^r([0,T];\mathcal{X}^p)$ at time $T$.
 \item if $p = 1$, the system \eqref{eq:control} is approximately null-controllable in $L^r([0,T];\mathcal{X}^p)$ at time $T$.
  \end{enumerate}
 \end{theorem}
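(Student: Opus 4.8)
The plan is to derive Theorem~\ref{thm:control} from the observability estimate of Theorem~\ref{thm:obs} via the classical duality between controllability and observability. First, recall that with the operators $V_T$ and $\mathcal{B}_T$ introduced above, \eqref{eq:control} is null-controllable in $L^r([0,T];\mathcal{X}^p)$ at time $T$ iff $\ran(V_T)\subseteq\ran(\mathcal{B}_T)$, and approximately null-controllable iff $\ran(V_T)\subseteq\overline{\ran(\mathcal{B}_T)}$. By the Banach-space generalizations of Douglas' lemma (\cite{Douglas-66,Embry-73,DoleckiR-77,Harte-78,CurtainP-78,Carja-85,Carja-88,Forough-14}), the former inclusion holds iff there is a constant $C>0$ such that
\[
  \lVert V_T' z' \rVert_{(\mathcal{X}^p)'} \le C\, \lVert \mathcal{B}_T' z' \rVert_{(L^r([0,T];\mathcal{X}^p))'} \qquad (z'\in(\mathcal{X}^p)') ,
\]
whereas the latter holds iff $\ker(\mathcal{B}_T')\subseteq\ker(V_T')$. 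So the task reduces to identifying the adjoints and recognizing the right-hand side as a final-state observation norm.

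Second, I would compute the adjoints in the Bochner setting. Since $X'$ has the Radon--Nikodym property and $p\in[1,\infty)$, we have $(\mathcal{X}^p)' = L^q(\R^d;X')$ isometrically (with $q$ the conjugate exponent and the natural pairing $\int_{\R^d}\dual{f(x),g(x)}\,\diff x$), exactly as used in the proof of Proposition~\ref{adjoint-multiplier}; moreover $L^{r'}([0,T];L^q(\R^d;X'))$ embeds isometrically into $(L^r([0,T];\mathcal{X}^p))'$, with equality of spaces when $q<\infty$ (that is, $p>1$). Using that $\Eins_E$ is a self-adjoint multiplication operator, a direct computation shows that $\mathcal{B}_T'z'$ is represented by the function $s\mapsto\Eins_E V_{T-s}'z'$, so after the substitution $t=T-s$,
\[
  \lVert \mathcal{B}_T' z' \rVert_{(L^r([0,T];\mathcal{X}^p))'} = \lVert V'_{(\cdot)}z' \rVert_{L^{r'}([0,T];L^q(E;X'))} .
\]
By Proposition~\ref{adjoint-multiplier}, $V_T' = S_T(D)'$ is the Fourier multiplier on $L^q(\R^d;X')$ with symbol $\xi\mapsto S_T(-\xi)'$, and this is precisely the semigroup symbol of the operator with symbol $\tilde a(\xi)=a(-\xi)'$: the order $m$ is necessarily even (otherwise $a_m(-\xi)=-a_m(\xi)$ would contradict the sectoriality of $a_m$ at $-\xi$), so $\tilde a$ has principal part $\xi\mapsto a_m(\xi)'$, and since passing to the Banach-space dual leaves resolvent sets and resolvent norms unchanged, $\tilde a$ satisfies the same normal-ellipticity estimate as $a$. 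Hence, by the constructions of Section~4.1, $(V_t')_{t\ge0}$ is the semigroup on $L^q(\R^d;X')$ associated to the normally elliptic operator $\tilde A=\tilde a(D)$.

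Third, apply Theorem~\ref{thm:obs} to $\tilde A$ on $L^q(\R^d;X')$ with the thick set $E$ and the exponents $q,r'$: there is $C_{\mathrm{obs}}>0$ with
\[
  \lVert V_T'z' \rVert_{L^q(\R^d;X')} \le C_{\mathrm{obs}}\, \lVert V'_{(\cdot)}z' \rVert_{L^{r'}([0,T];L^q(E;X'))} \qquad (z'\in L^q(\R^d;X')) .
\]
If $p\in(1,\infty)$, then $q\in(1,\infty)$, $L^q(\R^d;X')$ inherits the Radon--Nikodym property from $X'$, all the dual identifications of the second step are exact, and the last display is exactly the Douglas-lemma condition; hence \eqref{eq:control} is null-controllable at time $T$, proving (a). If $p=1$, then $q=\infty$ and $L^\infty(\R^d;X')$ need not have the Radon--Nikodym property, so $(L^r([0,T];\mathcal{X}^1))'$ need not be represented on $L^{r'}([0,T];L^\infty(\R^d;X'))$ and exact controllability is out of reach; but Theorem~\ref{thm:obs} still applies with $q=\infty$, and it yields $\mathcal{B}_T'z'=0\Rightarrow V'_{(\cdot)}z'=0$ a.e.\ on $E\Rightarrow V_T'z'=0$, i.e.\ $\ker(\mathcal{B}_T')\subseteq\ker(V_T')$; hence \eqref{eq:control} is approximately null-controllable at time $T$, proving (b).

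The conceptual content is light once Theorem~\ref{thm:obs} is available; the main obstacle is the bookkeeping of the dualization. The two points needing care are: verifying that the dual multiplier symbol $\xi\mapsto S_T(-\xi)'$ is genuinely the semigroup symbol of a normally elliptic operator on $L^q(\R^d;X')$ --- which is where Proposition~\ref{adjoint-multiplier}, the evenness of $m$, and the invariance of resolvent bounds under Banach-space dualization enter --- and tracking exactly which $L^r([0,T];\mathcal{X}^p)$-duality is at our disposal, since it is precisely the failure of the Radon--Nikodym property for $L^\infty(\R^d;X')$ that separates case (a) from case (b).
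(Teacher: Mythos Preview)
Your proof is correct and follows the same overall scheme as the paper's: reduce to a dual observability estimate via Douglas' lemma, identify $(\mathcal{X}^p)'=L^q(\R^d;X')$ via the Radon--Nikodym property, and then establish observability for $(V_t')_{t\ge0}$ on $L^q(\R^d;X')$. The one substantive difference is in this last step. You take the conceptual route of recognizing $V_t'$ as the semigroup attached to the normally elliptic operator $\tilde a(D)$ with $\tilde a(\xi)=a(-\xi)'$ --- using the nice observation that $m$ must be even and that resolvent sets and resolvent norms pass to Banach-space adjoints --- and then invoke Theorem~\ref{thm:obs} as a black box. The paper instead re-runs the \emph{proof} of Theorem~\ref{thm:obs} for the dual semigroup: it gets the dissipation estimate for $W_t=V_t'$ directly from the $L^1$-bound of Lemma~\ref{lem:diss} together with Proposition~\ref{adjoint-multiplier} (which bounds $\lVert m(D)'\rVert$ by $\lVert\F^{-1}m\rVert_{L^1}$), notes that Theorem~\ref{thm:LS} applies equally to $X'$-valued functions, and feeds both into the abstract Lebeau--Robbiano machinery. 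Your detour through the normal ellipticity of $\tilde a$ is slightly more elegant and makes the self-duality of the setup transparent; the paper's route is more direct and sidesteps the parity-of-$m$ argument entirely. For the identification of $\lVert\mathcal{B}_T'z'\rVert$, the paper simply quotes \cite[Theorem~2.1]{Vieru-05} to obtain $\lVert\mathcal{B}_T'f\rVert_{(L^r)'}=\lVert W_{(\cdot)}f\rVert_{L^{r'}([0,T];\mathcal{Y}^q)}$ uniformly in $p$, absorbing the bookkeeping you do by hand; in particular the split between (a) and (b) is, in both arguments, ultimately driven by the Banach-space version of Douglas' lemma rather than by a failure of the norm identity.
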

 \begin{proof}
 Let $q$ be such that $p^{-1} + q^{-1} = 1$ and $s$ such that $r^{-1} + s^{-1} = 1$. Write $\mathcal{Y}^q = (\mathcal{X}^p)^\prime.$ It holds that $\mathcal{Y}^q = L^{q}(\R^d;X^\prime)$ due to the Radon-Nikodym property of $X^\prime$. For $t \geq 0$ we set $W_t = V_t^\prime$. By Douglas' Lemma, the statement of the theorem is equivalent to the fact that there exists a constant $C_{\mathrm{obs}}$ such for every $f \in \mathcal{Y}^q$ we have the observability estimate
 \begin{equation}\label{eq:obs_dual}
 		\lVert W_Tf \rVert_{\mathcal{Y}^q} \leq C_{\mathrm{obs}}\lVert \mathcal{B}_T^\prime f \rVert_{L^r([0,T];\mathcal{X}^p)^\prime}\,.
 \end{equation}
 By \cite[Theorem~2.1]{Vieru-05} it holds
 \[
 \lVert \mathcal{B}_T^ \prime f \rVert_{L^r([0,T];\mathcal{X}^p)^\prime}
 = \lVert W_{(\cdot)} f \rVert_{L^{s}([0,T];\mathcal{Y}^q)}
 \]
Recall that $S_t$ is the symbol of $V_t$. To obtain \eqref{eq:obs_dual}, we note that due to Proposition~\ref{adjoint-multiplier} and Lemma~\ref{lem:diss} we obtain for all $\lambda \geq \lambda_0$ the dissipation estimate
 \begin{equation*}
 	\lVert (I - P_\lambda)W_t \rVert \leq \lVert \mathcal{F}^{-1}S_t(1 - \chi_\lambda)\rVert_{L^1(\R^d;\L(X))} \leq c_1 e^{-c_2t\lambda^m} .
 \end{equation*}
 Since the uncertainty principle also holds for functions with values in $X^\prime$, we obtain the observability estimate as in the proof of Theorem~\ref{thm:obs}.
 \end{proof}
\appendix
\section{Proof of Theorem~\ref{thm:LS}}
First we assume $L = (1,1,\ldots,1)$,  and fix $\lambda \in (0,\infty)^d$, $\rho > 0$, a $(\rho,1)$-thick set $E$, and $f \in L^p (\R^d ; X)$ with $\spt \F f \subseteq \Pi_\lambda$ as in the assumptions of the theorem. Note that $f$ is analytic since $\spt \F f$ is compact, see Section~\ref{sec:preliminaries}.
For $k \in \Z^d$ we denote by $\Lambda_k = (-1/2 , 1/2)^d + k \subseteq \R^d$ the open unit cube centered at $k$. Let 
\begin{equation}\label{eq:A}
A > \frac{1}{1-(2^d + 1)^{-1/d}} \in (3/2,2) ,
\end{equation}
and $C_2 > 0$ be the absolute constant from Proposition~\ref{prop:Bernstein}. We call $k \in \Z^d$ \emph{bad} if there exists $\alpha \in \N_0^d$ with $\alpha \not = 0$ such that
\[
 \lVert \Eins_{\Lambda_k} \partial^\alpha f\rVert_{L^p (\R^d ; X)} \geq 2^{d} A^{\lvert \alpha \rvert } (C_2 \lambda)^{\alpha } \lVert \Eins_{\Lambda_k} f \rVert_{L^p (\R^d ; X)} .
\]
Otherwise we call $k \in \Z^d$ \emph{good}. Moreover, we will use the notation 
\[
 \Lambda_{\text{bad}} = \bigcup_{ \genfrac{}{}{0pt}{}{k \in \Z^d \colon}{k\ \text{is bad}} }
 \Lambda_k 
 \quad \text{and} \quad \Lambda_{\text{good}} = \bigcup_{ \genfrac{}{}{0pt}{}{k \in \Z^d \colon}{k\ \text{is good}} } \Lambda_k  .
\]
\begin{lemma} 
 \begin{enumerate}[(i)]
  \item We have
  \[
  \lVert \Eins_{\Lambda_{\mathrm{good}}} f \rVert_{L^p (\R^d ; X)}
  \geq 
  C_3 \lVert f \rVert_{L^p (\R^d ; X)} , 
  \]
  where
  \[
  C_3 := C_3 (A) := 1 - \left( \frac{1}{2^d} \left[ \left( \frac{1}{1-1/A} \right)^d - 1 \right]\right)^{1/p} \in (0,1)
  \]
  if $p \in [1,\infty)$, and $C_3 = 1$ if $p = \infty$   .
  \item\label{lemma:1b} There exists $B > A$ such that for all good $k \in \Z^d$ there exists $x \in \Lambda_k$ such that for all $\alpha \in \N_0^d$ we have
\begin{equation*}
 \lVert \partial^\alpha f (x) \rVert_X \leq 4^d B^{\lvert \alpha \rvert } (C_2 \lambda)^{\alpha  }
 \lVert \Eins_{\Lambda_k} f \rVert_{L^p (\R^d ; X)} .
 \end{equation*}
 \end{enumerate}
 \label{lemma:1}
\end{lemma}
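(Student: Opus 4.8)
The plan is to establish the two assertions separately; both amount to the bookkeeping in Kovrijkine's argument, the only point needing the vector-valued setting being that Proposition~\ref{prop:Bernstein} holds for $X$-valued functions. One may assume $f \neq 0$, and then, $f$ being analytic, $\lVert \Eins_{\Lambda_k} f \rVert_{L^p(\R^d;X)} > 0$ for every $k \in \Z^d$.

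For (i) with $p < \infty$: for each $\alpha \in \N_0^d \setminus \{0\}$ let $\mathcal{B}_\alpha$ denote the set of $k$ satisfying the defining inequality of a bad cube for that particular $\alpha$, so that $k$ is bad precisely when $k \in \bigcup_{\alpha \neq 0} \mathcal{B}_\alpha$. First I would rewrite that inequality as an upper bound for $\lVert \Eins_{\Lambda_k} f \rVert_{L^p}^p$, sum it over $k \in \mathcal{B}_\alpha$ using that the cubes $\Lambda_k$ are pairwise disjoint, and then invoke Bernstein's inequality $\lVert \partial^\alpha f \rVert_{L^p} \leq (C_2\lambda)^\alpha \lVert f \rVert_{L^p}$; this gives $\sum_{k \in \mathcal{B}_\alpha} \lVert \Eins_{\Lambda_k} f \rVert_{L^p}^p \leq 2^{-dp} A^{-p\lvert\alpha\rvert}\lVert f \rVert_{L^p}^p$. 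Summing over $\alpha \neq 0$, using $p \geq 1$ and $2^d A^{\lvert\alpha\rvert} \geq 1$ to pass to the exponent $1$, and evaluating $\sum_{\alpha \in \N_0^d} A^{-\lvert\alpha\rvert} = (1-1/A)^{-d}$, one obtains $\lVert \Eins_{\Lambda_{\mathrm{bad}}} f \rVert_{L^p}^p \leq 2^{-d}[(1-1/A)^{-d} - 1]\,\lVert f \rVert_{L^p}^p$; since $\lVert \Eins_{\Lambda_{\mathrm{good}}} f \rVert_{L^p}^p = \lVert f \rVert_{L^p}^p - \lVert \Eins_{\Lambda_{\mathrm{bad}}} f \rVert_{L^p}^p$, taking $p$-th roots yields the stated bound. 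The one substantive check is that the resulting $C_3$ lies in $(0,1)$: the bracket is positive and is $< 2^d$ iff $(1-1/A)^{-1} < (2^d+1)^{1/d}$, i.e.\ iff $A > (1-(2^d+1)^{-1/d})^{-1}$, which is exactly hypothesis \eqref{eq:A}. For $p = \infty$ one argues instead that a bad cube satisfies $\lVert \Eins_{\Lambda_k} f \rVert_{L^\infty} \leq (2^d A)^{-1}\lVert f \rVert_{L^\infty} < \lVert f \rVert_{L^\infty}$ (Bernstein applied to the witnessing $\alpha$), so the essential supremum of $\lVert f(\cdot)\rVert_X$ over $\R^d$ is already attained on $\Lambda_{\mathrm{good}}$, giving $C_3 = 1$.

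For (ii), fix a good $k$ and first take $p < \infty$. For each $\alpha$ put $c_\alpha = 4^d B^{\lvert\alpha\rvert}(C_2\lambda)^\alpha\lVert \Eins_{\Lambda_k} f \rVert_{L^p}$ and $G_\alpha = \{x \in \Lambda_k : \lVert \partial^\alpha f(x)\rVert_X > c_\alpha\}$. Chebyshev's inequality gives $\lvert G_\alpha\rvert \leq c_\alpha^{-p}\lVert \Eins_{\Lambda_k} \partial^\alpha f\rVert_{L^p}^p$; inserting the goodness of $k$ for $\alpha \neq 0$ (and the trivial bound for $\alpha = 0$), the factors $\lVert \Eins_{\Lambda_k} f \rVert_{L^p}$ and $(C_2\lambda)^\alpha$ cancel, leaving $\lvert G_\alpha\rvert \leq 2^{-dp}(A/B)^{p\lvert\alpha\rvert}$ for $\alpha \neq 0$ and $\lvert G_0\rvert \leq 4^{-dp}$. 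Summing over $\alpha$ (again reducing $p$ to $1$, now since $A/B < 1$) bounds $\sum_\alpha \lvert G_\alpha\rvert$ by $4^{-d} + 2^{-d}[(1-A/B)^{-d} - 1]$, whose right-hand side tends to $4^{-d} < 1$ as $B \to \infty$; so one chooses $B > A$, depending only on $d$, large enough that this sum is $< 1 = \lvert \Lambda_k\rvert$. Then $\Lambda_k \setminus \bigcup_\alpha G_\alpha$ has positive measure, and any point $x$ in it satisfies $\lVert \partial^\alpha f(x)\rVert_X \leq c_\alpha$ for all $\alpha$, which is the assertion. For $p = \infty$, goodness of $k$ (together with continuity of $\partial^\alpha f$, so that the essential supremum over $\Lambda_k$ is a genuine supremum) gives $\lVert \partial^\alpha f(x)\rVert_X \leq 2^d A^{\lvert\alpha\rvert}(C_2\lambda)^\alpha\lVert \Eins_{\Lambda_k} f \rVert_{L^\infty} \leq 4^d B^{\lvert\alpha\rvert}(C_2\lambda)^\alpha\lVert \Eins_{\Lambda_k} f \rVert_{L^\infty}$ for every $x \in \Lambda_k$ and every $\alpha$, so any $x \in \Lambda_k$ works.

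The hard part is not the estimates, which are routine geometric-series manipulations, but two matching-up points: verifying that the precise threshold \eqref{eq:A} on $A$ is exactly what forces $C_3 \in (0,1)$ in (i), and ensuring the constant $B$ in (ii) can be chosen uniformly over all good cubes $k$ — which it can, because after the cancellations the bound on $\sum_\alpha \lvert G_\alpha\rvert$ no longer involves $k$, $\lambda$ or $f$.
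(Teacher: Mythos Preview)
Your argument is correct. Part~(i) is essentially the paper's proof, only organized around the sets $\mathcal{B}_\alpha$ rather than summing over all $\alpha\neq 0$ for each bad cube; the resulting estimates coincide line by line.

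For part~(ii) you take a genuinely different route. The paper argues by contraposition: assuming no admissible $x$ exists, every $x\in\Lambda_k$ satisfies $4^d\lVert \Eins_{\Lambda_k} f\rVert_{L^p} < \sum_\alpha \lVert\partial^\alpha f(x)\rVert_X\,(B^{|\alpha|}(C_2\lambda)^\alpha)^{-1}$; taking the $L^p(\Lambda_k)$-norm (via Minkowski, using $|\Lambda_k|=1$) and inserting the good-cube inequality yields $2^d \le (1-A/B)^{-d}$, which fails for the explicit choice $B=3A$. Your Chebyshev-plus-measure argument is more direct: it avoids both the contrapositive framing and the implicit Minkowski step, and it makes the uniformity of $B$ over $k$, $\lambda$, $f$ explicit, since after cancellation the bound on $\sum_\alpha|G_\alpha|$ depends only on $d$ and $A/B$. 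The paper's approach, in return, produces the concrete value $B=3A$ without further computation.
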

\begin{proof}
It follows by definition that for all $p \in [1,\infty)$
\begin{align*}
 \lVert \Eins_{\Lambda_{\mathrm{bad}}} f \rVert_{L^p (\R^d ; X)}^p
 &= \sum_{k \in \Z^d \cap \Lambda_{\text{bad}}} \lVert \Eins_{\Lambda_{k}} f \rVert_{L^p (\R^d ; X)}^p
 \leq 
 \sum_{k \in \Z^d \cap \Lambda_{\text{bad}}} \sum_{\genfrac{}{}{0pt}{}{\alpha \in \N_0^d \colon}{\alpha \not = 0} } \frac{\lVert \Eins_{\Lambda_k} \partial^\alpha f \rVert_{L^p (\R^d ; X)}^p}{2^{dp} A^{p \lvert \alpha \rvert } (C_2 \lambda)^{p \alpha}} \\
 &=
 \sum_{\genfrac{}{}{0pt}{}{\alpha \in \N_0^d \colon}{\alpha \not = 0}} \frac{\lVert \Eins_{\Lambda_{\text{bad}}} \partial^\alpha f \rVert_{L^p (\R^d ; X)}^p}{2^{dp} A^{p \lvert \alpha \rvert} (C_2 \lambda)^{p \alpha }}
 \leq
 \sum_{\genfrac{}{}{0pt}{}{\alpha \in \N_0^d \colon}{\alpha \not = 0}} \frac{\lVert \partial^\alpha f \rVert_{L^p (\R^d ; X)}^p}{2^{dp} A^{p \lvert \alpha \rvert} (C_2 \lambda)^{p \alpha }} .
\end{align*}
By Proposition~\ref{prop:Bernstein}, and since $A \geq 1$ we conclude for all $p \in [1,\infty)$ that 
\begin{align*}
 \lVert \Eins_{\Lambda_{\mathrm{bad}}} f \rVert_{L^p (\R^d ; X)}^p
 &\leq
 \sum_{\genfrac{}{}{0pt}{}{\alpha \in \N_0^d \colon}{\alpha \not = 0}} \frac{\lVert f \rVert_{L^p (\R^d ; X)}^p}{2^{dp} A^{p \lvert \alpha \rvert }} 
 =
 \frac{1}{2^{dp}} \left[ \left( \frac{1}{1-1/A^p} \right)^d - 1 \right]\lVert f \rVert_{L^p (\R^d ; X)}^p .
 \\
 &\leq \frac{1}{2^d} \left[ \left( \frac{1}{1-1/A} \right)^d - 1 \right]\lVert f \rVert_{L^p (\R^d ; X)}^p = (1-C_3)^p \lVert f \rVert_{L^p (\R^d ; X)}^p .
\end{align*}
For $p \in [1,\infty)$ it follows that
\[
 \lVert \Eins_{\Lambda_{\mathrm{good}}} f \rVert_{L^p (\R^d ; X)}
 \geq 
 C_3 \lVert f \rVert_{L^p (\R^d ; x)} .
\]
By \eqref{eq:A} we have $C_3 \in (0,1)$. {This proves the first claim in the case $p \in [1,\infty)$. If $p = \infty$ the proof is even easier. By the definition of bad and Proposition~\ref{prop:Bernstein} we have
\begin{align*}
\lVert \Eins_{\Lambda_{\mathrm{bad}}} f \rVert_{L^\infty (\R^d ; X)} 
&\leq 
\sup\limits_{k \in \Z^d \colon k \ \text{bad}} \sum_{\genfrac{}{}{0pt}{}{\alpha \in \N_0^d \colon}{\alpha \not = 0}} \frac{\lVert \Eins_{\Lambda_k} \partial^\alpha f \rVert_{L^\infty (\R^d ; X)}}{2^d A^{\lvert \alpha \rvert} (C_2 \lambda)^\alpha}  \\
&\leq
\frac{1}{2^d} \left[\left(\frac{1}{1-1/A} \right)^d - 1 \right]\lVert  f \rVert_{L^\infty (\R^d ; X)}  .
\end{align*}
Since the prefactor in the last inequality is strictly smaller than one, we conclude that $\lVert \Eins_{\Lambda_{\mathrm{good}}} f \rVert_{L^\infty (\R^d ; X)} = \lVert  f \rVert_{L^\infty (\R^d ; X)}$.}
\par
In order to prove part (\ref{lemma:1b}) we consider the contraposition, that is, for all $B > A$ there exists a good $k \in \Z^d$ such that for all $x \in \Lambda_k$ there is $\alpha \in \N_0^d$ with
 \[
 \lVert \partial^\alpha f (x) \rVert_X > 4^d B^{\lvert \alpha \rvert } (C_2 \lambda)^{\alpha }
 \lVert \Eins_{\Lambda_k} f \rVert_{L^p (\R^d ; X)} .
\]
This and the definition of good implies that there exists a good $k \in \Z^d$ such that we have 
\begin{align*}
 2^{d} \lVert \Eins_{\Lambda_k} f \rVert_{L^p (\R^d ; X)}
  <
  \sum_{\alpha \in \N_0^d}\frac{\lVert \Eins_{\Lambda_k} \partial^\alpha f \rVert_{L^p (\R^d ; X)}}{2^{d} B^{ \lvert \alpha \rvert } (C_2 \lambda)^{\alpha}} 
 &  \leq \sum_{\alpha \in \N_0^d} \left( \frac{A}{B} \right)^{\lvert \alpha \rvert}
  \lVert \Eins_{\Lambda_k} f \rVert_{L^p (\R^d ; X)} .
\end{align*}
Choosing, for instance, $B = 3A$ we obtain
\[
 2^{d} \lVert \Eins_{\Lambda_k} f \rVert_{L^p (\R^d ; X)}
 \leq
 \left(\frac{1}{1-\left( 1/3 \right)}\right)^d  \lVert \Eins_{\Lambda_k} f \rVert_{L^p (\R^d ; X)} ,
\]
a contradiction.
\end{proof}
Let $s = 1$ if $p \in [1,\infty)$ or some arbitrary number $s \in (0, 1)$ if $p = \infty$, $k \in \Z^d$ be good and $y \in \Lambda_{k}$ be such that $\lVert f(y) \rVert_X \geq s \lVert \Eins_{\Lambda_{k}} f \rVert_{L^p (\R^d ; X)}$. Furthermore, let $\Omega \subseteq \Lambda_{k}$ be a measurable set to be chosen later. Then, using spherical coordinates, we have
\[
 \lvert \Omega \rvert
 =
 \int_{\Lambda_{k}} \Eins_{\Omega} (x) \diff x
 =
 \int_{S^{d-1}} \int_{r=0}^{r (\vartheta)} \Eins_{\Omega} (y+ r \vartheta) r^{d-1} \diff r \diff\sigma (\vartheta) ,
\]
where $r (\vartheta) = \sup \{t > 0 \colon y + t \vartheta \subseteq \Lambda_{k}\}$, and where $\sigma$ denotes the surface measure.
There exists a $\vartheta_0 \in S^{d-1}$ such that 
\begin{equation} \label{eq:line-segment}
 \lvert \Omega \rvert \leq {\sigma( S^{d-1} )} \int_{0}^{r(\vartheta_0)} \Eins_{\Omega} (y+ r \vartheta_0) r^{d-1} \diff r .
\end{equation}
Indeed, if the converse inequality to \eqref{eq:line-segment} would hold for all $\vartheta \in S^{d-1}$, then averaging over $S^{d-1}$ would give a contradiction.
Let now $I_0 = \{ y + r \vartheta_0 \colon r>0 , \ y+r\vartheta_0 \in \Lambda_{k}\}$ be the largest line segment in $\Lambda_{k}$ starting in $y$ in the direction of $\vartheta_0$. Since $r (\vartheta_0) \leq d^{1/2}$ we conclude from \eqref{eq:line-segment} that $\lvert \Omega \rvert \leq \sigma(S^{d-1}) d^{(d-1)/2} \lvert \Omega  \cap I_0 \rvert$, where, with some abuse of notation, we use the notation
$
 \lvert \Omega  \cap I_0 \rvert = \int_{0}^{r(\vartheta_0)} \Eins_{\Omega} (y+ r \vartheta_0) \diff r 
$. 
\par
Now we define the function $F \colon \C^d \to X$ by 
\[
 F (w) = \frac{1}{N} (\mathcal{L} \mathcal{F} f) (y+w \lvert I_0 \rvert \vartheta_0) ,
\]
where $\mathcal{L}$ denotes the inverse Fourier-Laplace transform, cf.\ Section~\ref{sec:preliminaries}, and where $N$ de\-notes the normalization $N = s\lVert \Eins_{\Lambda_{k}} f \rVert_{L^p (\R^d ; X)}$. Note that $F$ is an entire function which extends $(1/N)f(y + \cdot \abs{I_0}\vartheta_0)$ to $\C^d$, see Section~\ref{sec:preliminaries}. Thus we have for all $w \in \C^d$ and $x \in \R^d$
\begin{align*}
 \lVert F (w) \rVert_X & \leq 
 \frac{1}{N}\sum_{\alpha \in \N_0^d} \frac{\lVert f^{(\alpha)} (x) \rVert_X}{\alpha !} \prod_{i = 1}^d \lvert (y + w \lvert I_0 \rvert \vartheta_0 - x)_i \rvert^{\alpha_i} %\\
% &\leq\frac{4^{d}}{N}\sum_{\alpha \in \N_0^d} \frac{ B^{\lvert \alpha \rvert} (C_2\lambda)^{\alpha } \lVert \Eins_{\Lambda_{k}} f \rVert_{L^p (\R^d ; X)}}{\alpha !} \prod_{i = 1}^d \lvert (y + w \lvert I_0 \rvert \vartheta_0 - x_0)_i \rvert^{\alpha_i} .
\end{align*}
By Lemma~\ref{lemma:1} there exists $x_0 \in \Lambda_{k}$ such that for all $w \in \C^d$
\begin{equation*}
 \lVert F (w) \rVert_X \leq\frac{4^{d}}{N}\sum_{\alpha \in \N_0^d} \frac{ B^{\lvert \alpha \rvert} (C_2\lambda)^{\alpha } \lVert \Eins_{\Lambda_{k}} f \rVert_{L^p (\R^d ; X)}}{\alpha !} \prod_{i = 1}^d \lvert (y + w \lvert I_0 \rvert \vartheta_0 - x_0)_i \rvert^{\alpha_i} .
\end{equation*}
Since for all $w \in \discc{5}$ we have 
\[
y - x_0 + w \lvert I_0 \rvert \vartheta_0 \in \bigtimes\limits_{i=1}^d \discc{6 \sqrt{d}},
\]
we conclude for all $w \in \discc{5}$ 
\begin{align*}
 \lVert F (w) \rVert_X 
 &\leq 
 \frac{4^{d}}{N } \sum_{\alpha \in \N_0^d} \frac{B^{\lvert \alpha \rvert} (C_2\lambda)^{ \alpha} \lVert \Eins_{\Lambda_{k}}f \rVert_{L^p (\R^d ; X)}}{\alpha !} (6 \sqrt{d})^{\lvert \alpha \rvert} \\
 & = 
 \frac{4^{d}}{N} \lVert \Eins_{\Lambda_{k}} f \rVert_{L^p (\R^d ; X)} \exp\left( 6 d^{1/2} B C_2 \lvert \lambda \rvert \right) 
 =
 4^{d} \exp\left( 6 d^{1/2}  B C_2 \lvert \lambda \rvert \right) =:M .
\end{align*}
We recall that by assumption on $y$ we have $\lVert F (0) \rVert_X = N^{-1} \lVert f(y) \rVert_X \geq 1$. By Proposition~\ref{prop:funktionentheorie} we have for all closed intervals $I \subseteq \R$ with $0 \in I$ and $\lvert I \rvert = 1$, and all measurable $A \subseteq I$ that
 \begin{equation*}
	\sup_{x \in A} \lVert F(x) \rVert_X \geq \left( \frac{\abs{A}}{C_1} \right)^{\frac{\ln(M)}{\ln(2)}} \sup_{x \in I} \lVert F(x) \rVert_X.
\end{equation*}
with some absolute constant $C_1 \geq 1$. Choose $I = [0,1]$ and $A = \{t \in [0,1] \colon y + t \vartheta_0 \in \Omega \cap I_0\}$, then
 \begin{equation*}
  \sup_{x \in \Omega \cap I_0} \lVert f(x) \rVert_X \geq \left( \frac{\abs{\Omega \cap I_0}}{C_1} \right)^{\frac{\ln(M)}{\ln(2)}} \sup_{x \in I_0} \lVert f(x) \rVert_X.
\end{equation*}
By our choice of $y$ we have that $\sup_{x \in I_0} \lVert f(x) \rVert_X \geq \lVert f (y) \rVert_X \geq s \lVert \Eins_{\Lambda_{k}} f \rVert_{L^p (\R^d ; X)}$. Moreover, we have shown above that $\lvert \Omega \rvert \leq \sigma(S^{d-1}) d^{(d-1)/2} \lvert \Omega \cap I_0 \rvert$. Hence, we conclude
\begin{equation*}
\sup_{x \in \Omega } \lVert f(x) \rVert_X \geq
 \left( \frac{\abs{\Omega}}{C_1 \sigma(S^{d-1}) d^{(d-1)/2}} \right)^{\frac{\ln(M)}{\ln(2)}} s \lVert \Eins_{\Lambda_{k}} f \rVert_{L^p (\R^d ; X)}  .
\end{equation*}
Recall that $s = 1$ if $p \in [1,\infty)$, and that the above inequality holds for arbitrary $s \in (0,1)$ if $p = \infty$. By taking limits we obtain
\begin{equation} \label{eq:supremum}
\sup_{x \in \Omega } \lVert f(x)\rVert_X \geq
 \left( \frac{\abs{\Omega}}{C_1 \sigma(S^{d-1}) d^{(d-1)/2}} \right)^{\frac{\ln(M)}{\ln(2)}}  \lVert \Eins_{\Lambda_{k}} f \rVert_{L^p (\R^d ; X)}  .
\end{equation}
Now we choose
\[
 \Omega = \left\{x \in \Lambda_{k}\colon \left( \frac{\abs{E \cap \Lambda_{k}}}{2C_1 \sigma(S^{d-1}) d^{(d-1)/2}} \right)^{\frac{\ln(M)}{\ln(2)}} \lVert \Eins_{\Lambda_{k}} f \rVert_{L^p (\R^d ; X)} > \lVert f(x) \rVert_X 
 \right\} .
\]
By Ineq.~\eqref{eq:supremum} and the definition of $\Omega$, we obtain
\begin{align*}
\left( \frac{\abs{E \cap \Lambda_{k}}}{2 \lvert \Omega \rvert} \right)^{\frac{\ln(M)}{\ln(2)}} 
 \sup_{x \in \Omega} \lVert f(x) \rVert_X 
 &\geq 
 \left( \frac{\abs{E \cap \Lambda_{k}}}{2 C_1 \sigma ( S^{d-1} ) d^{(d-1)/2}} \right)^{\frac{\ln(M)}{\ln(2)}} \lVert \Eins_{\Lambda_{k}} f \rVert_{L^p (\R^d ; X)}  \\
 &\geq
 \sup_{x \in \Omega} \lVert f(x) \rVert ,
\end{align*}
and thus $\lvert \Omega \rvert \leq \lvert E \cap \Lambda_{k} \rvert / 2$. The definition of $\Omega$ implies 
\begin{align*}
 \lVert \Eins_{E \cap \Lambda_{k}} f \rVert_{L^p (\R^d ; X)}
 &\geq 
 \lVert \Eins_{E \cap \Lambda_{k}} \Eins_{\Omega^{\mathrm{c}}} f \rVert_{L^p (\R^d ; X)} \\
 &\geq 
 \left( \frac{\abs{E \cap \Lambda_{k}}}{2C_1 \lvert S^{d-1} \rvert d^{(d-1)/2}} \right)^{\frac{\ln(M)}{\ln(2)}} \lVert \Eins_{\Lambda_{k}} f \rVert_{L^p (\R^d ; X)} \lVert \Eins_{E \cap \Lambda_{k} \cap \Omega^{\mathrm{c}}} \rVert_{L^p (\R^d)} .
\end{align*}
Moreover, since $\lvert \Omega \rvert \leq \lvert E \cap \Lambda_{k} \rvert / 2$, we have 
\begin{equation*}
 \lvert E \cap \Lambda_{k} \cap \Omega^{\mathrm{c}} \rvert 
 = \lvert E \cap \Lambda_{k} \rvert - \lvert E \cap \Lambda_{k} \cap \Omega \rvert  
 \geq 
 \lvert E \cap \Lambda_{k} \rvert - \lvert \Omega \rvert
 \geq 
 \frac{\lvert E \cap \Lambda_{k} \rvert}{2} .
\end{equation*}
Since $E$ is thick, we have that $1 \geq \lvert E \cap \Lambda_k \rvert > 0$ , thus $E \cap \Lambda_k \cap \Omega^{\mathrm{c}}$ has positive measure as well. We conclude
\begin{align*}
 \lVert \Eins_{E \cap \Lambda_{k} \cap \Omega^{\mathrm{c}}} \rVert_{L^p (\R^d)}
 \geq 
 \frac{\lvert E \cap \Lambda_{k} \rvert}{2} 
\end{align*}
Hence, using $C_4 := 2C_1 \lvert S^{d-1} \rvert d^{(d-1)/2} \geq 2$, the fact that $\lvert E \cap \Lambda_{k} \rvert \geq \rho$ by the definition of the thick set $E$, and $\rho / C_4 \leq 1$ we can conclude
\begin{align*}
\lVert \Eins_{E \cap \Lambda_{k}} f \rVert_{L^p (\R^d ; X)}
 &\geq
 \left( \frac{\abs{E \cap \Lambda_{k}}}{2C_1 \lvert S^{d-1} \rvert d^{(d-1)/2}} \right)^{\frac{\ln(M)}{\ln(2)}} \lVert \Eins_{\Lambda_{k}} f \rVert_{L^p (\R^d ; X)} 
 \left( \frac{\lvert E \cap \Lambda_{k} \rvert}{2} \right) 	\\
 &
 \geq 
 \left( \frac{\rho}{C_4} \right)^{\frac{\ln(M)}{\ln(2)}+1} \lVert \Eins_{\Lambda_{k}} f \rVert_{L^p (\R^d ; X)} .
\end{align*}
Since $k \in \Z^d$ was arbitrary but good, we can either sum over all good cubes (if $p \in [1,\infty)$), or take the supremum over all good cubes (if $p = \infty$), and obtain by using Lemma~\ref{lemma:1} 
\[
\lVert \Eins_{E} f \rVert_{L^p (\R^d ; X)}
\geq \lVert \Eins_{E \cap \Lambda_{\text{good}}} f \rVert_{L^p (\R^d ; X)} \geq C_3 \left( \frac{\rho}{C_4} \right)^{\frac{\ln(M)}{\ln(2)} + 1} \lVert f \rVert_{L^p (\R^d ; X)} .
\]
By the definitions of $M$, $C_3$ and $C_4$ and using that $\rho \leq 1$, we find that there exists a constant $C_d \geq 1$ depending only on the dimension $d$ such that for all $p \in [1,\infty]$ we have
\[
\lVert \Eins_{E} f \rVert_{L^p (\R^d ; X)}
\geq \left( \frac{\rho}{C_d} \right)^{C_d(1 + \abs{\lambda}_1) } \lVert f \rVert_{L^p (\R^d ; X)} .
\]
This proves the statement in the case $L = (1,1,\ldots,1)$. Let now $L \in [0,\infty)^d$ be arbitrary. Theorem~\ref{thm:LS} follows by applying the result for $L = (1,1,\ldots,1)$ to the function $f \circ T_L$ where $T_L:\R^d \to \R^d$ is given by $T_L x = (L_kx_k)_{k=1}^{d}$. 
\paragraph{Acknowledgments.} Both authors thank Thomas Kalmes and Christian Seifert for stimulating discussions, which significantly helped to improve the manuscript.

\end{document}